\newtheorem{theorem}{Theorem}[section]
\newtheorem{definition}{Definition}[section]
\newtheorem{lemma}[theorem]{Lemma}
\newtheorem{proposition}[theorem]{Proposition}
\newtheorem{corollary}[theorem]{Corollary}
\newtheorem{remark}[theorem]{Remark}
\newcommand{\mc}[1]{{\mathcal #1}}
\newcommand{\bb}[1]{{\mathbb #1}}
\newcommand{\eps}{\varepsilon}
\newcommand{\Y}{\mathcal{Y}}
\newcommand{\<}{\langle}
\renewcommand{\>}{\rangle}
\newcommand{\p}{\partial}
\newcommand{\pfrac}[2]{\genfrac{}{}{}{1}{#1}{#2}}
\newcommand{\at}[2]{\genfrac{}{}{0pt}{}{#1}{#2}}
\newcommand{\A}{\Delta_\beta}
\newcommand{\B}{\nabla_\beta}
\keywords{Equilibrium fluctuations, phase transition, slowed exclusion}
\date{}
\begin{document}

\title[Equilibrium fluctuations]{Phase transition in equilibrium fluctuations\\ of symmetric slowed
exclusion}

%    author one information
% \author[short version for running head]{name for top of paper}
\author{Tertuliano Franco}
\address{UFBA\\
 Instituto de Matem\'atica, Campus de Ondina, Av. Adhemar de Barros, S/N. CEP 40170-110\\
Salvador, Brasil}
\curraddr{}
\email{tertu@impa.br}
\thanks{}

%    author two information
\author{Patr\'{\i}cia Gon\c{c}alves}
\address{Departamento de Matem\'atica, PUC-RIO, Rua Marqu\^es de S\~ao Vicente, no. 225, 22453-900, Rio de Janeiro, Rj-Brazil and CMAT, Centro de Matem\'atica da Universidade do Minho, Campus de Gualtar, 4710-057, Braga, Portugal}
\curraddr{}
\email{patricia@mat.puc-rio.br}
\thanks{}

\author{Adriana Neumann}
\address{UFRGS, Instituto de Matem\'atica, Campus do Vale, Av. Bento Gon\c calves, 9500. CEP 91509-900, Porto Alegre, Brasil}
\curraddr{}
\email{aneumann@impa.br}
\thanks{}

\subjclass[2010]{60K35,26A24,35K55}
\begin{abstract}
 We analyze the equilibrium fluctuations of density, current and tagged particle  in symmetric exclusion with a slow bond. The system evolves in the
one-dimensional lattice and the jump rate is everywhere equal to one except  at the slow bond where it is $\alpha n^{-\beta}$, with $\alpha>0$, $\beta\in[0,+\infty]$ and $n$ is the
scaling parameter.
Depending on the regime of $\beta$, we find three different behaviors for the limiting fluctuations whose covariances are
explicitly computed.
In particular, for the critical value $\beta=1$, starting a tagged particle near the slow bond,  we obtain a family of gaussian processes indexed in $\alpha$,
interpolating a fractional Brownian motion of Hurst exponent $1/4$ and the degenerate process equal to zero.
\end{abstract}

\maketitle

\section{Introduction}
The exclusion process is a standard  interacting particle system, widely studied in Probability and Statistical
Mechanics. Informally, such model corresponds to particles performing continuous time random walks in a lattice, except when a particle tries to
jump to an already occupied site. In such case, the jump is forbidden and the particle has to wait a new random time.

There is an intensive research on the behavior of exclusion processes in many different aspects and from varied points of view. In particular,  on the behavior
of exclusion processes in random/non homogeneous medium, see for instance \cite{fjl,fsv,fgn,fl}.

In this paper we analyze the fluctuations of the one-dimensional symmetric exclusion process with a slow bond, for which the
hydrodynamic limit was treated in \cite{fgn,fgn2}. The dynamics of this model can be described as follows. On the
one-dimensional lattice, it is allowed at most one particle per site. To each bond is associated a Poisson clock. When this clock
rings, the occupation variables at the vertices of the bond are interchanged with a certain rate.
Of course, if both the sites are occupied or empty, nothing
happens. All bonds have a Poisson clock of parameter one, except one special bond, the \emph{slow bond}, in which the Poisson clock has
parameter $\alpha n^{-\beta}$, where $\alpha>0$, $\beta\in[0,+\infty]$ and $n$ is the integer scaling parameter. At the end $n$ is lead to infinity. The
process starts from the equilibrium measure, namely a Bernoulli product measure of parameter $\rho\in {(0,1)}$, and it is seen in the diffusive time scale,
or else, in times of order $n^2$.

We are concerned with the  fluctuations,  that is, the Central Limit Theorem (C.L.T.) for the density, the current of particles through a fixed bond and the tagged particle. Such results are well known for the classical symmetric exclusion,
where all the Poisson clocks have parameter one. For the density, the fluctuations are given by a generalized Ornstein-Uhlenbeck process, while the fluctuations of the current and the tagged particle are both given by the fractional
Brownian motion of Hurst exponent $1/4$, see \cite{kl,ps}.

The introduction of the slow bond changes dramatically the scenario. Not so  intuitively, the value $\beta=1$ is critical.
For $\beta\in[0,1)$, we obtain {\em ipsis litteris} the same results for the fluctuations of the symmetric exclusion just mentioned. This means that, in this case, the jump rate at the slow bond is not sufficiently strong in order to change the macroscopic behavior of the system.  Nevertheless, the proof of this result is not straightforward and requires a Local Replacement which is sharp for this regime of $\beta$. For $\beta\in (1,+\infty]$, it is proved here that the fluctuations
of the density are driven by the semigroup of the heat equation with Neumann's boundary conditions. This means that for this regime of $\beta$,
the slow bond splits the system into two separate regions in which the macroscopic dynamics evolves independently.

Finally, at the critical value $\beta=1$, we prove that the density fluctuation field converges to a generalized Ornstein-Uhlenbeck
process driven by the semigroup of the partial differential equation
\begin{equation}\label{robin}
\left\{
\begin{array}{ll}
 \partial_t u(t,x) = \; \partial^2_{xx} u(t,x), &t \geq 0,\, x \in \mathbb R\backslash\{0\}\\
\p_x u(t,0^+)=\p_x u(t,0^-)=\alpha\{u(t,0^+)-u(t,0^-) \},  &t \geq 0\\
 u(0,x) = \; g(x), &x \in \mathbb R
\end{array}
\right.
\end{equation}
if the slow bond is located near the origin.
 If the slow bond is located elsewhere, the result is the same, but with the boundary conditions stated above for the corresponding macroscopic point. We remark that last equation is similar to the heat equation with a boundary condition of Robin's type, but relating the positive and negative half-lines.
 Notice that, for this regime of $\beta$, the parameter $\alpha$ survives in the limit. In the case $\alpha=1$, we characterize explicitly the Ornstein-Uhlenbeck process obtained in  \cite{fsv}. More precisely,  the authors of \cite{fsv} consider the process evolving on $\bb T_n$, take a general measure $W$ and prove that the density fluctuation field converges to an Ornstein-Uhlenbeck process, which is not explicit. Taking $W$ as the sum of  the Lebesgue measure and a delta of Dirac and considering the  process evolving in infinite volume, we give an explicit description of the aforementioned process.

Knowing the density fluctuations, we obtain, for the three regimes of $\beta$, the corresponding current
fluctuations and we compute explicitly the covariances for the limiting gaussian processes. It is of worth to remark the behavior of the fluctuations of the current through the slow bond. For $\beta\in{[0,1)}$ we get a fractional Brownian motion of Hurst exponent $1/4$ and for $\beta\in{(1,+\infty]}$ we get the degenerate process equal to zero. For $\beta=1$, the current fluctuations are given by a family of
gaussian processes indexed in $\alpha$ interpolating the fractional Brownian motion of Hurst exponent $1/4$ and the degenerate
process equal to zero.  By this, we mean that we can recover these two processes from the case $\beta=1$ by taking the limit as $\alpha\to +\infty$ or as $\alpha\to 0$, respectively, being the convergence in the sense of finite dimensional distributions.

Lastly, as a consequence of the previous result, it is straightforward to obtain the C.L.T. for a tagged particle. In this case, we consider as initial measure the Bernoulli product measure conditioned to have a particle at a given site. Therefore, the system is no longer in equilibrium, but anyhow we can use the previous result to deduce the behavior of a tagged particle in this non-equilibrium situation. Following \cite{g,jl} and since we are in  dimension one, the aforementioned result follows from relating the position of a tagged particle with the current and the density of particles. %From this, we conclude that the limit, in the sense of finite-dimensional distributions,  of the tagged particle is the same as described above for the current.

The paper is divided as follows. In Section \ref{s2}, we introduce notation and state the results. In Section \ref{s3}, we present the C.L.T. for the density of particles. In Section \ref{s4}, we get an
explicit formula for the semigroup of \eqref{robin}. In Section \ref{s5} we give a
martingale characterization of the generalized Ornstein-Uhlenbeck processes obtained in the fluctuations of the density of particles. In Section \ref{s6}, we prove the
C.L.T. for the current. Section \ref{s7} contains some useful estimates that we will use along the text.

\section{Definitions and main results}\label{s2}
\subsection{The model}

The symmetric simple exclusion process with conductances $\xi^n_{x,x+1}\geq{0}$ is a Markov process $\{\eta_t:t\geq{0}\}$, with configuration space $\Omega:=\{0,1\}^{\bb Z}$. We denote by $\eta$ the
configurations of the state space $\Omega$ so that $\eta(x)=0$, if the
site $x$ is vacant, and $\eta(x)=1$, if the site $x$ is occupied. Its infinitesimal generator $\mc L_{n}$
acts on local functions $f:\Omega\rightarrow \bb{R}$ as
\begin{equation}\label{ln}
(\mc L_{n}f)(\eta)=\sum_{x\in \bb Z}\,\xi^{n}_{x,x+1}\,\Big[f(\eta^{x,x+1})-f(\eta)\Big]\,,
\end{equation}
where $\eta^{x,x+1}$ is the configuration obtained from $\eta$ by exchanging the occupation variables $\eta(x)$ and $\eta(x+1)$:
\begin{equation*}
(\eta^{x,x+1})(y)=\left\{\begin{array}{cl}
\eta(x+1),& \mbox{if}\,\,\, y=x\,,\\
\eta(x),& \mbox{if} \,\,\,y=x+1\,,\\
\eta(y),& \mbox{otherwise.}
\end{array}
\right.
\end{equation*}
We define the symmetric exclusion with a slow bond at $\{-1,0\}$ by taking the conductances as
\begin{equation*}
\xi^{n}_{x,x+1}\;=\;\left\{\begin{array}{cl}
\alpha n^{-\beta}, &  \mbox{if}\,\,\,\,x=-1\,,\\
1, &\mbox{otherwise\,.}
\end{array}
\right.
\end{equation*}

We notice that when $\beta=0$ and $\alpha=1$, the process becomes the well known symmetric simple exclusion process. We are interested in
analyzing the
behavior of the process when $\alpha>0$ and $\beta\in(0,+\infty]$.

A simple computation shows that the Bernoulli product measures $\{\nu_\rho : 0\le \rho \le 1\}$ are invariant, in fact reversible, for the symmetric simple exclusion process with conductances, in particular also for the considered process. More precisely,
$\nu_\rho$ is
a product measure over $\Omega$ with marginals given by $\nu_\rho \{\eta : \eta(x) =1\} \;=\; \rho$, for $x$ in $\bb Z$.

Denote by $\{\eta_{tn^2} : t\ge 0\}$ the Markov process on $\Omega$ associated to the generator $n^2\mc L_n$.
%\emph{speeded up} by
%$n^2$, namely {\color{red} $\eta_{tn^2}:=\eta_{tn^2}$}.
Let $\mc D(\bb R_+, \Omega)$ be the path space of c\`adl\`ag trajectories (continuous from the right with limits from the left) with values in
$\Omega$. For a measure $\mu_n$ on $\Omega$, denote by $\bb P_{\mu_n}^\beta$ the probability measure on
$\mc D(\bb R_+, \Omega)$ induced by the initial state $\mu_n$ and the Markov process $\{\eta_{tn^2} : t\ge 0\}$. Expectation with respect to $\bb
P_{\mu_n}^\beta$ will be denoted
by $\bb E_{\mu_n}^\beta$. To simplify notation, we will denote  $\bb P_{\nu_\rho}^\beta$ by  $\bb P_\rho^\beta$ and we do not index $\bb P_{\mu_n}^\beta$ nor
 $\bb E_{\mu_n}^\beta$ in $\alpha$.
 We define also $\chi(\rho):=\rho(1-\rho)$, the so-called \emph{static compressibility} of the system.

\subsection{The Operators $\A$ and $\B$} \label{operator A beta}
We introduce some spaces we will use in the sequel.

\begin{definition}
Let $L^2_\beta(\mathbb{R})$ be the space of functions $H:\mathbb{R}\rightarrow{\mathbb{R}}$ with $\|H\|_{2,\beta}<+\infty$, where
\begin{equation*}
\|H\|_{2,\beta}^2=\left\{\begin{array}{cl}
 \int_{\mathbb{R}}H(u)^2 du, & \mbox{if}\,\,\, \beta\neq{1}\\

 \\
 \int_{\mathbb{R}}H(u)^2 du+H(0)^2, & \mbox{if}\,\,\, \beta=1.
\end{array}
\right.
\end{equation*}
\end{definition}
Notice that, for $\beta\neq{1}$, the norm $\|\cdot\|_{2,\beta}$ is the usual $L^2(\bb R)$-norm with respect to the Lebesgue measure that we denote by
$\lambda$. For simplicity in this case we write $\|\cdot\|_{2}$.
For $\beta=1$, the norm $\|\cdot\|_{2,\beta}$ is the $L^2(\bb R)$-norm with respect to the
measure $\lambda+\delta_{0}$, where $\delta_{u}$ denotes the Dirac measure at
the point $u\in{\bb R}$.

In the sequel, given $H:\mathbb{R}\rightarrow{\mathbb{R}}$, we  denote
$$H(0^+):=\displaystyle\lim_{\at{u\to 0,}{u>0}}H(u)\quad \textrm{ and }\quad  H(0^-):=\displaystyle\lim_{\at{u\to
0,}{u<0}}H(u)\,,$$
when the above limits exist.
For $k\in{\mathbb{N}}$, we denote by $H^{(k)}(x)$, the
$k^{\textrm{th}}$-derivative of $H$ at the point $x\in{\mathbb{R}}$. For $k=0$,  $H^{(0)}(x)$ means $H(x)$.

\begin{definition}
Define $\mc S(\bb R\backslash \{0\})$ as the space of functions $H:\bb R\to\bb R$ such that $H\in C^\infty(\bb R\backslash\{0\})$, $H$ is continuous from the right at $x=0$ and $H$ satisfies
\begin{equation*}
 \Vert H \Vert_{k,\ell}\;:=\;\sup_{x\in \bb R\backslash{\{0\}}}|(1+|x|^\ell)
\,H^{(k)}(x)|\;<\;\infty\,,
\end{equation*}
\noindent for all integers $k,\ell\geq 0$ and $H^{(k)}(0^-)=H^{(k)}(0^+)$, for all $k$ integer, $k\geq 1$.
\end{definition}

Next, we present the domains for  $\A$ and $\B$.
\begin{definition} \label{S heatperiodic}
For $\beta\in[0,1)$, we define $\mc S_\beta(\bb R)$ as the subset of $\mc S(\bb R\backslash \{0\})$
composed of functions $H$ satisfying
$$H(0^-)=H(0^+)\,.$$
\end{definition}
Notice that the space above is nothing more than the usual Schwartz space $\mc S(\bb R)$. Fix now
$\alpha>0$.
\begin{definition}  \label{S heatrobin}
For $\beta=1$,  we define $\mc S_\beta(\bb R)$ as the subset of $\mc S(\bb R\backslash \{0\})$
composed of functions $H$ satisfying
$$H^{(1)}(0^+)\;=\; H^{(1)}(0^-)\;=\; \alpha \Big( H(0^+)-H(0^-)\Big)\,.$$
\end{definition}
\begin{definition}  \label{S heatneumann}
For $\beta\in(1,+\infty]$,  we define $\mc S_\beta(\bb R)$ as the subset of $\mc S(\bb R\backslash \{0\})$
composed of functions $H$ satisfying
$$H^{(1)}(0^+)\;=\; H^{(1)}(0^-)\;=\; 0\,.$$
\end{definition}
\begin{proposition}\label{frechet}
 For any chosen $\beta\in[0,+\infty]$, the space $\mc S_\beta(\bb R)$ is a Fr\'echet space.
\end{proposition}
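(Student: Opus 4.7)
The plan is to deduce that $\mathcal{S}_\beta(\mathbb R)$ is Fr\'echet by exhibiting it as a closed subspace of the larger space $\mathcal{S}(\mathbb R\setminus\{0\})$, after first establishing that $\mathcal{S}(\mathbb R\setminus\{0\})$ itself is Fr\'echet. The family $\{\|\cdot\|_{k,\ell}\}_{k,\ell\ge 0}$ is countable, and it separates points since $\|H\|_{0,0}=\sup|H|$ vanishes only for $H\equiv 0$. Hence the usual construction
\begin{equation*}
d(H,G)\;=\;\sum_{k,\ell\ge 0}2^{-(k+\ell)}\,\frac{\|H-G\|_{k,\ell}}{1+\|H-G\|_{k,\ell}}
\end{equation*}
yields a translation-invariant metric inducing the locally convex topology, so the only real content is completeness.

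To prove that $\mathcal{S}(\mathbb R\setminus\{0\})$ is complete, I would take a Cauchy sequence $(H_n)$ and, for each $k\ge 0$, note that $(1+|x|^\ell)\,H_n^{(k)}(x)$ is uniformly Cauchy on $\mathbb R\setminus\{0\}$. Writing $g_k$ for its pointwise limit (divided by the weight), uniform convergence on each half-line, combined with the standard fact that uniform convergence of both a sequence and its derivatives implies the limit is differentiable with the expected derivative, yields a function $H\in C^\infty(\mathbb R\setminus\{0\})$ with $H^{(k)}=g_k$ on $\mathbb R\setminus\{0\}$ and $\|H_n-H\|_{k,\ell}\to 0$ for every $k,\ell$. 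Right-continuity at $0$ follows by defining $H(0):=\lim_n H_n(0)$ and using uniform convergence on $(0,+\infty)$. The matching conditions $H^{(k)}(0^+)=H^{(k)}(0^-)$ for $k\ge 1$ pass to the limit because one-sided limits at $0$ are controlled: for any $H\in\mathcal{S}(\mathbb R\setminus\{0\})$ one has $|H^{(k)}(0^\pm)|\le \|H\|_{k,0}$, so the linear functionals $H\mapsto H^{(k)}(0^\pm)$ are continuous in the Fr\'echet topology and the equations defining membership are preserved under limits.

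With $\mathcal{S}(\mathbb R\setminus\{0\})$ in hand, the three cases for $\mathcal{S}_\beta(\mathbb R)$ become routine: in each case the space is cut out by finitely many equations built from the continuous linear functionals $H\mapsto H(0^+),\ H(0^-),\ H^{(1)}(0^+),\ H^{(1)}(0^-)$, namely $H(0^+)-H(0^-)=0$ for $\beta\in[0,1)$; $H^{(1)}(0^+)=H^{(1)}(0^-)=\alpha(H(0^+)-H(0^-))$ for $\beta=1$; and $H^{(1)}(0^+)=H^{(1)}(0^-)=0$ for $\beta\in(1,+\infty]$. These are kernels of continuous linear maps, hence closed. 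A closed subspace of a Fr\'echet space is Fr\'echet, which concludes the argument.

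I expect the only delicate point to be the verification that the pointwise limit actually lives in $C^\infty(\mathbb R\setminus\{0\})$ with the prescribed derivatives, that is, the interchange of limit and derivative in the definition of the seminorms; everything else (Hausdorffness, metrizability, preservation of the boundary conditions) reduces to the observation that point-evaluation at $0^\pm$ of $H$ and $H^{(1)}$ is dominated by $\|\cdot\|_{0,0}$ and $\|\cdot\|_{1,0}$ respectively, so no genuinely new ingredient beyond the classical Schwartz-space completeness argument is required.
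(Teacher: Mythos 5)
Your proposal is correct and follows exactly the route the paper itself sketches (and leaves largely to the reader): prove that $\mc S(\bb R\backslash\{0\})$ is Fr\'echet by the classical Schwartz-space argument with the countable seminorm family $\Vert\cdot\Vert_{k,\ell}$, then observe that each $\mc S_\beta(\bb R)$ is a closed subspace, cut out by the continuous functionals $H\mapsto H^{(k)}(0^\pm)$, hence Fr\'echet. Your write-up merely supplies the completeness and closedness details that the paper omits with a reference to \cite{rs}, so there is nothing to correct.
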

The definition of a Fr\'echet space can be found, for instance, in \cite{rs}. The proof that $\mc S(\bb R\backslash \{0\})$
is a Fr\'echet space follows the same lines of that of \cite{rs} for the usual Schwartz space $\mc S(\bb R)$, and for that reason it will be omitted.
Since the spaces $\mc S_\beta(\bb R)$ are closed vector spaces of $\mc S(\bb R\backslash\{0\})$, this implies they are also Fr\'echet
spaces.
We notice that along the paper we only use this fact when we invoke the result of \cite{Mit} about tightness of stochastic process
taking values in Fr\'echet spaces.

\begin{definition}
We define the operators  $\A: \mc S_\beta(\bb R)\rightarrow \mc S(\bb R)$  and
$\B: \mc S_\beta(\bb R)\rightarrow \mc S(\bb R)$ by
\begin{equation*}
\B H(u)\;=\;\left\{\begin{array}{cl}
H^{(1)}(u), &  \mbox{if}\,\,\,\,u\neq 0\,,\\
H^{(1)}(0^+), &\mbox{if}\,\,\,\,u=0\,,
\end{array}
\right.
\qquad \textrm{and}\qquad
\A H(u)\;=\;\left\{\begin{array}{cl}
H^{(2)}(u), &  \mbox{if}\,\,\,\,u\neq 0\,,\\
H^{(2)}(0^+), &\mbox{if}\,\,\,\,u=0\,.
\end{array}
\right.
\end{equation*}
\end{definition}
Notice that the operators $\B$ and $\A$ are essentially the usual derivative and the usual second derivative, but defined in specific
domains.

\subsection{Hydrodynamic limit, PDE's and semigroups}
The hydrodynamic limit for the exclusion process with a slow bond was already studied in
\cite{fgn,fgn2}. We state it here for completeness.
Let $g: \mathbb R \to [0,1]$ be a piecewise continuous function and suppose that there exists a constant $C_g$ such that $g-C_g$ has compact support. Let $n \in \mathbb N$ be a scaling
parameter. We define a probability measure $\mu^n$ in $\Omega$  by
\[
\mu^n\big(\eta(z_1)=1,...,\eta(z_\ell)=1\big) = \prod_{i=1}^\ell g(z_i/n),
\]
for any set $\{z_1,...,z_\ell\} \subseteq \mathbb Z$ and $\ell\in{\bb N}$.
Let  $\{\eta_{tn^2};t  \geq 0\}$ have initial distribution $\mu^n$.
We define the {\em empirical measure} $\{\pi_t^n; t \geq 0\}$ as the measure-valued process given by
\[
\pi_t^n(dx) = \frac{1}{n} \sum_{x \in \mathbb Z} \eta_{tn^2}(x) \delta_{\frac{x}{n}}(dx).
\]
In words, the empirical measure represents the time evolution of the spatial density of particles.

Now, let  $\mc M_+$ be the space of positive measures on $\bb R$ with total
mass bounded by one, endowed with the weak topology.

\begin{theorem}[Franco, Gon\c calves, Neumann \cite{fgn,fgn2}] \label{p1}

\quad

For any $\beta\in{[0,+\infty]}$ and for any $T \geq 0$, as $n\to+\infty$, the sequence of measure valued processes $\{\pi_t^n(dx);\,$ $t \in [0,T]\}_{n \in \mathbb N}$
converges in probability with respect to the Skorohod topology of  $\mc D([0,T], \mathcal M_+(\mathbb R))$, to some $\{u(t,x)dx;\, t \in [0,T]\}$. Moreover,

\begin{itemize}
\item
for $\beta\in [0,1)$, $\{u(t,x); t \geq 0,\, x \in \mathbb R\}$ is the unique weak
solution of the heat equation
\begin{equation}\label{pde1}
\left\{
\begin{array}{ll}
\partial_t u(t,x) = \; \partial^2_{xx} u(t,x), &t \geq 0,\, x \in \mathbb R\\
u(0,x) = \; g(x), &x \in \mathbb R.
\end{array}
\right.
\end{equation}
\item
for $\beta=1$, $\{u(t,x); t \geq 0, x \in \mathbb R\}$ is the
unique weak solution of the heat equation with a boundary condition of Robin's type at $x=0$
\begin{equation}\label{pde2}
\left\{
\begin{array}{ll}
 \partial_t u(t,x) = \; \partial^2_{xx} u(t,x), &t \geq 0,\, x \in \mathbb R\backslash\{0\}\\
\p_x u(t,0^+)=\p_x u(t,0^-)=\alpha\{u(t,0^+)-u(t,0^-) \},  &t \geq 0\\
 u(0,x) = \; g(x), &x \in \mathbb R.
\end{array}
\right.
\end{equation}
\item
for $\beta\in (1,+\infty]$, $\{u(t,x); t \geq 0, x \in \mathbb R\}$ is the unique weak
solution of the heat equation with a boundary condition of Neumann's type at $x=0$
\begin{equation}\label{pde3}
\left\{
\begin{array}{ll}
\partial_t u(t,x) = \; \partial^2_{xx} u(t,x), &t \geq 0,\, x \in \mathbb R\backslash\{0\}\\
\p_x u(t,0^+)=\p_x u(t,0^-)=0,  &t \geq 0\\
u(0,x) = \; g(x), &x \in \mathbb R.
\end{array}
\right.
\end{equation}
\end{itemize}
\end{theorem}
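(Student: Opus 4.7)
The plan is the standard three-step program for hydrodynamic limits: (i) prove tightness of the laws of $\{\pi^n_t\}$ in $\mc D([0,T],\mc M_+(\bb R))$; (ii) show that every limit point is concentrated on absolutely continuous trajectories $u(t,x)dx$ whose density solves the appropriate weak formulation of \eqref{pde1}, \eqref{pde2} or \eqref{pde3} according to the value of $\beta$; (iii) invoke uniqueness of weak solutions to conclude that the full sequence converges. The novelty compared with the usual symmetric exclusion lies in step (ii), where the test functions must live in the space $\mc S_\beta(\bb R)$ introduced in Section \ref{s2}: the gluing/boundary conditions built into $\mc S_\beta$ are precisely those that the density $u$ will inherit in the limit.

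For a test function $H\in\mc S_\beta(\bb R)$, Dynkin's formula gives the martingale
\begin{equation*}
M^{n,H}_t \;=\; \<\pi^n_t,H\>-\<\pi^n_0,H\>-\int_0^t n^2 \mc L_n \<\pi^n_s,H\>\, ds,
\end{equation*}
with quadratic variation of order $n^{-1}$ (times the usual bounds on $H$), so $M^{n,H}_t\to 0$ in $L^2$. Writing out $n^2\mc L_n\<\pi^n_s,H\>$ and performing discrete integration by parts, one obtains
\begin{equation*}
n^2\mc L_n\<\pi^n_s,H\> \;=\; \frac{1}{n}\sum_{x\neq -1,0}\eta_{sn^2}(x)\,\Delta_n H(x/n) \;+\; R^n_s(H),
\end{equation*}
where $\Delta_n$ is the discrete Laplacian and $R^n_s(H)$ collects the contributions of the three bonds touching the slow bond. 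A direct calculation shows that $R^n_s$ reduces to a combination of the discrete gradients $n[H(0)-H(-1/n)]$, $n\alpha n^{-\beta}[H(0)-H(-1/n)]$ and $n[H(1/n)-H(0)]$ multiplied by the occupation variables at sites $-1$ and $0$. The smoothness requirements encoded in $\mc S_\beta(\bb R)$ guarantee that $\Delta_n H(x/n)\to H''(x)$ uniformly away from $0$, and the $n$-prefactor in the boundary term exactly selects the regime: for $\beta\in[0,1)$ the slow-bond term is negligible and one recovers the continuity condition $H(0^+)=H(0^-)$; for $\beta=1$ the factor $\alpha$ survives and produces the Robin-type coupling $H^{(1)}(0^\pm)=\alpha(H(0^+)-H(0^-))$; for $\beta>1$ the coupling rate vanishes and leaves the Neumann condition $H^{(1)}(0^\pm)=0$. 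This is precisely the integration by parts that defines the weak formulation of \eqref{pde1}--\eqref{pde3}.

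The analytical step that is not routine is the \emph{Local Replacement} at the slow bond: the boundary term $R^n_s$ involves the instantaneous occupation variables $\eta_{sn^2}(-1),\eta_{sn^2}(0)$ at microscopic sites, whereas the target PDE is formulated in terms of the macroscopic traces $u(s,0^\pm)$. One must therefore show that, up to an error vanishing as $n\to\infty$, these occupations can be replaced by empirical averages on boxes of size $\eps n$ around $0^\pm$, which in turn converge (by the standard one-block/two-blocks argument adapted to non-homogeneous conductances) to $u(s,0^\pm)$. The bound must be uniform in $s\in[0,T]$ and sharp enough to absorb the $n^{1-\beta}$ prefactor in the slow-bond contribution; this is the step where the different regimes of $\beta$ have to be treated separately and which, as the authors note in the introduction, requires a regime-dependent Local Replacement. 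I expect this to be the main technical obstacle, and it is handled by standard Dirichlet-form / moving-particle-lemma machinery combined with the reversibility of $\nu_\rho$.

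Tightness of $\{\pi^n_t\}$ follows from Mitoma's criterion applied to the Fréchet structure of $\mc S_\beta(\bb R)$ (Proposition \ref{frechet}) once tightness of $\<\pi^n_\cdot,H\>$ is verified for each fixed $H\in\mc S_\beta(\bb R)$; for a single test function, Aldous's criterion reduces to bounding the quadratic variation of $M^{n,H}$ and the $L^2$-norm of $n^2\mc L_n\<\pi^n_s,H\>$, both of which are under control thanks to the boundary construction above. Finally, uniqueness of the weak solutions of \eqref{pde1}, \eqref{pde2} and \eqref{pde3} is classical: for \eqref{pde1} it is standard, for \eqref{pde3} it follows from separate energy estimates on each half-line, and for \eqref{pde2} one multiplies by $u$ and uses the Robin condition to control the boundary term in the energy identity. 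Combining tightness, identification, and uniqueness yields the theorem.
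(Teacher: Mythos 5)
You should note at the outset that this paper does not actually prove Theorem \ref{p1}: the result is quoted from \cite{fgn,fgn2}, and the only comment made here is that the passage from the finite-volume (periodic) setting of those references to infinite volume requires only topological adaptations, the notions of weak solution being those of \cite{fgn2} with compactly supported test functions. So there is no internal proof to compare against; at the level of strategy your outline (martingale decomposition, tightness, identification of limit points, uniqueness of weak solutions, with the slow bond handled by a replacement estimate) is the program carried out in those references, and in that sense it is the right plan.

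As a proof, however, the sketch has concrete gaps. First, the identification step is internally inconsistent. If you take test functions in $\mc S_\beta(\bb R)$, then for $\beta\geq 1$ the $O(n)$ boundary terms cancel exactly (this is the same cancellation exploited in Section \ref{s3} for the fluctuation field), so no replacement of $\eta_{sn^2}(-1),\eta_{sn^2}(0)$ by traces $u(s,0^\pm)$ is needed; but the identity you then obtain, $\<u_t,H\>-\<u_0,H\>=\int_0^t\<u_s,\A H\>ds$ for $H$ in the restricted class, is not the weak formulation of \eqref{pde2}--\eqref{pde3} used in \cite{fgn2}, and you must either prove uniqueness for this restricted formulation (e.g.\ via time-dependent test functions of the form $T^\beta_{t-s}G$) or work with general test functions, in which case the boundary terms survive, the trace-producing replacement is genuinely needed, and you also need the microscopic energy estimate guaranteeing $u\in L^2([0,T];\mc H^1(\bb R\setminus\{0\}))$ so that the traces exist and your ``multiply by $u$'' uniqueness argument closes; that energy estimate is missing from your outline. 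Second, for $\beta\in[0,1)$ the difficulty is not ``absorbing an $n^{1-\beta}$ prefactor'' in a trace identification: with smooth test functions the slow bond contributes the non-vanishing term $(\alpha n^{-\beta}-1)\,H^{(1)}(0)\,\{\eta_{sn^2}(-1)-\eta_{sn^2}(0)\}$ to the field, and what must be shown is that its time integral vanishes, i.e.\ a two-blocks comparison of the occupations on either side of the slow bond (the hydrodynamic, and easier, analogue of Lemma \ref{2orderBG}). Finally, a minor point: Mitoma's criterion concerns $\mc S'_\beta(\bb R)$-valued processes and is the tool used in this paper for the fluctuation field; tightness of the measure-valued empirical process $\pi^n$ is instead obtained from the standard criterion for $\mc D([0,T],\mc M_+(\bb R))$, namely Aldous' criterion for $\<\pi^n_\cdot,H\>$ over a countable family of compactly supported smooth functions, compact containment being automatic since the mass density is bounded by one.
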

The previous theorem corresponds to Theorem 4.1 of \cite{fgn} and Theorem 2.1 of \cite{fgn2}, considering the process evolving in finite volume (periodic). However, the proof for infinite volume is the same,
aside from some  topological adaptations. The definitions of weak solutions of equation \eqref{pde1}, \eqref{pde2} and \eqref{pde3} are the same as given in \cite{fgn2} for the finite volume case, with the additional usual assumption that the test functions are compactly supported.

Each one of the partial differential equations mentioned above is linear. As we will
see later, in order to prove the existence of a Ornstein-Uhlenbeck process with characteristics $\Delta_\beta$ and $\nabla_\beta$, we will make use of the explicit expression for the semigroups corresponding to $\Delta_\beta$. The
semigroup of
\eqref{pde1} is classical and it acts on $g\in\mc S_\beta(\bb R)$ with $\beta\in[0,1)$ given in Definition \ref{S heatperiodic}, as
  \begin{equation}\label{sem heat eq}
  T_t g(x)\;=\; \frac{1}{\sqrt{4\pi t}}\int_{\bb R} e^{-\frac{(x-y)^2}{4t}}g(y)\,dy\,,\quad \textrm{for }x\in\bb R\,.
 \end{equation}
The semigroup of \eqref{pde3} is also known and it acts on $g\in\mc S_\beta(\bb R)$ with  $\beta\in(1,+\infty]$ given in Definition \ref{S heatneumann}, as
  \begin{equation}\label{sem heat eq neu}
  T_t^\textrm{Neu} g(x)\;=\;
  \begin{cases}
\displaystyle   \frac{1}{\sqrt{4\pi t}}\int_{0}^{+\infty}\Big[
e^{-\frac{(x-y)^2}{4t}}+e^{-\frac{(x+y)^2}{4t}}\Big]g(y)\,dy\,,\quad &\textrm{for }x>0\,,\\
\displaystyle  \frac{1}{\sqrt{4\pi t}}\int_{0}^{+\infty}\Big[
e^{-\frac{(x-y)^2}{4t}}+e^{-\frac{(x+y)^2}{4t}}\Big]g(-y)\,dy\,,\quad &\textrm{for }x<0\,.\\
\end{cases}
 \end{equation}

Denote by $g_{\textrm{even}}$ and $g_{\textrm{odd}}$ the even and odd parts of a function $g:\bb R\to \bb R$, respectively, or
else, for $x\in{\mathbb{R}}$,
\begin{equation*}
 g_{\textrm{even}}(x)=\frac{g(x)+g(-x)}{2}\quad \textrm{and} \quad g_{\textrm{odd}}(x)=\frac{g(x)-g(-x)}{2}\,.
\end{equation*}
\begin{proposition}\label{prop23}
 The semigroup of \eqref{pde2} acts on $g\in\mc S_\beta(\bb R)$ with $\beta=1$ given in Definition \ref{S heatrobin}, as
  \begin{equation*}
  \begin{split}
  & T_t^\alpha g(x)= \frac{1}{\sqrt{4\pi t}}\Bigg\{\int_{\bb R}
e^{-\frac{(x-y)^2}{4t}} g_{\textrm{{\rm even}}}(y)\,dy \\
    & + e^{2\alpha x}\int_x^{+\infty} e^{-2\alpha z} \int_0^{+\infty}
\Big[(\pfrac{z-y+4\alpha t}{2t})e^{-\frac{(z-y)^2}{4t}}+(\pfrac{z+y-4\alpha t}{2t})e^{-\frac{(z+y)^2}{4t}}\Big]\,
g_{\textrm{{\rm odd}}}(y)\, dy\, dz\,\Bigg\}\,,\\
  \end{split}
  \end{equation*}
\noindent for $x>0$ and
  \begin{equation*}
  \begin{split}
 & T_t^{\alpha} g(x)= \frac{1}{\sqrt{4\pi t}}\Bigg\{\int_{\bb R}
e^{-\frac{(x-y)^2}{4t}} g_{\textrm{{\rm even}}}(y)\,dy \\
    & - e^{-2\alpha x}\int_{-x}^{+\infty} e^{-2\alpha z} \int_0^{+\infty}
\Big[(\pfrac{z-y+4\alpha t}{2t})e^{-\frac{(z-y)^2}{4t}}+(\pfrac{z+y-4\alpha t}{2t})e^{-\frac{(z+y)^2}{4t}}\Big]\,
g_{\textrm{{\rm odd}}}(y)\, dy\, dz\,\Bigg\}\,,\\
  \end{split}
  \end{equation*}
\noindent for $x<0$.
\end{proposition}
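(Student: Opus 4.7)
The plan is to use linearity: write $g = g_{\textrm{even}} + g_{\textrm{odd}}$ and solve \eqref{pde2} separately for each piece. The Robin identity $g^{(1)}(0^+) = \alpha(g(0^+)-g(0^-))$ decomposes nicely: $g_{\textrm{even}}$ satisfies $g_{\textrm{even}}^{(1)}(0^\pm) = 0$ with no jump, and $g_{\textrm{odd}}$ satisfies the half-line Robin condition $g_{\textrm{odd}}^{(1)}(0^+) = 2\alpha\, g_{\textrm{odd}}(0^+)$. Hence both lie in $\mc S_\beta(\bb R)$ with $\beta=1$, and I can treat them independently.

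For the even piece, the standard Euclidean heat kernel $\frac{1}{\sqrt{4\pi t}}\int_{\bb R} e^{-(x-y)^2/4t} g_{\textrm{even}}(y)\,dy$ produces a solution that is even in $x$; hence $u(t,0^+) = u(t,0^-)$ and $\p_x u(t,0^\pm) = 0$, so the Robin condition is satisfied trivially. This accounts for the first integral in both displayed expressions. For the odd piece, by odd symmetry it suffices to solve the heat equation on $(0,+\infty)$ with Robin condition $\p_x v(t,0^+) = 2\alpha\, v(t,0^+)$ and initial datum $g_{\textrm{odd}}|_{(0,+\infty)}$, and then set $u(t,x) = -v(t,-x)$ for $x<0$; this odd extension automatically enforces $\p_x u(t,0^+) = \p_x u(t,0^-)$ and reproduces the jump condition.

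The technical core is the half-line Robin problem, which I handle via the substitution
\[
v(t,x) \;=\; e^{2\alpha x}\int_x^{+\infty} e^{-2\alpha z}\,W(t,z)\,dz.
\]
A direct differentiation gives $\p_x v = 2\alpha v - W(t,x)$, so the Robin condition for $v$ is equivalent to the \emph{Dirichlet} condition $W(t,0)=0$ for $W$; and two integrations by parts in $z$ (with $e^{-2\alpha z}W$ and $e^{-2\alpha z}W_z$ vanishing at infinity) show that $v$ solves the heat equation iff $W$ does. Inverting the substitution at $t=0$ yields the initial datum $W(0,z) = 2\alpha\, g_{\textrm{odd}}(z) - g_{\textrm{odd}}^{(1)}(z)$ for $z>0$, and the standard Dirichlet heat semigroup on $(0,+\infty)$ then gives
\[
W(t,z) \;=\; \tfrac{1}{\sqrt{4\pi t}}\int_0^{+\infty}\!\big[e^{-(z-y)^2/4t} - e^{-(z+y)^2/4t}\big]\big(2\alpha\, g_{\textrm{odd}}(y) - g_{\textrm{odd}}^{(1)}(y)\big)\,dy.
\]
Integrating the $g_{\textrm{odd}}^{(1)}$ term by parts in $y$ — the boundary contribution at $y=0$ vanishes because the Dirichlet kernel does — produces precisely the integrand appearing in the statement. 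Plugging back into the formula for $v$ gives the claim for $x>0$, and the reflection $u(t,x) = -v(t,-x)$ explains both the sign change and the lower integration limit $-x$ in the formula for $x<0$. The main obstacle is purely bookkeeping — verifying that the exponential substitution converts Robin into Dirichlet at the boundary and that the two integrations by parts genuinely recover the heat equation for $v$ — with no delicate analytic estimate needed since $g\in\mc S_\beta(\bb R)$ supplies all the decay and regularity required.
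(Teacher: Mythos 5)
Your proof is correct and follows essentially the same route as the paper: split $g$ into even and odd parts, reduce the half-line Robin problem to a Dirichlet problem via the transformation $W=2\alpha v-\partial_x v$ (the paper writes $v=2\alpha u-\partial_x u$ and solves the resulting linear ODE, which is just your integral substitution read in the other direction), and conclude with the Dirichlet heat kernel and an integration by parts in $y$. No gaps; your extra verifications (that the substitution preserves the heat equation and that the boundary terms vanish) are precisely the ``direct verification'' steps the paper leaves implicit.
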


Throughout the text we will simply write $T^\beta_t$ for the three semigroups $T_t$, $T^\alpha_t$ and
$T_t^{\textrm{Neu}}$, corresponding to the regimes $\beta\in[0,1)$, $\beta=1$ and $\beta\in(1,{+\infty}]$, respectively.

\begin{remark}
Since a smooth solution is a weak solution, the classical formulas \eqref{sem heat eq} and \eqref{sem heat eq neu} and the previous proposition  guarantee that the weak solutions in Theorem \ref{p1} are, indeed, smooth solutions.
\end{remark}

 Notice that $T_t^{\textrm{Neu}}$ evolves a function in independent ways in each half line, but $T^\alpha_t$ does not.
From this characterization of the semigroup $T_t^{\alpha}$, we get almost for free the following result:
\begin{proposition}\label{conver_pde}
 Let $u,u^\alpha,u^{\textrm{Neu}}:\bb R_+\times \bb R\to [0,1]$ be the unique smooth solution  of \eqref{pde1}, \eqref{pde2} and \eqref{pde3}, respectively. Then,
\begin{equation*}
 \lim_{\alpha\to {+\infty}} u^\alpha(t,x)\;=\;u(t,x) \quad \textrm{ and } \quad   \lim_{\alpha\to 0}
u^\alpha(x,t)\;=\;u^{\textrm{Neu}}(t,x)\,,
 \end{equation*}
 for all $(t,x)\in \bb R_+\times (\bb R\backslash\{0\})$.
Besides that, for fixed $t>0$, the following convergence holds
\begin{equation*}
 \lim_{\alpha\to {+\infty}} \Vert u^\alpha(t,\cdot)-u(t,\cdot)\Vert_{L^p(\bb R)}\;=\;0 \quad \textrm{ and } \quad   \lim_{\alpha\to
0}
\Vert u^\alpha(t,\cdot)-u^{\textrm{Neu}}(t,\cdot)\Vert_{L^p(\bb R)}\;=\;0,
\end{equation*}
for all $p\in[1,{+\infty}]$.
 \end{proposition}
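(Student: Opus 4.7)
The plan is to read everything off the explicit formulas in Proposition \ref{prop23}, together with \eqref{sem heat eq} and \eqref{sem heat eq neu}, so that the proposition really becomes a calculus exercise. First I would decompose $g = g_{\textrm{even}} + g_{\textrm{odd}}$. Inspection of the formula for $T_t^\alpha g(x)$ shows that its $g_{\textrm{even}}$ contribution is the $\alpha$-independent Gaussian convolution $(4\pi t)^{-1/2}\int_{\bb R} e^{-(x-y)^2/4t} g_{\textrm{even}}(y)\,dy$, which equals $T_t g_{\textrm{even}}(x)$ and, by splitting the integral at $y=0$ and using evenness, also equals $T_t^{\textrm{Neu}} g_{\textrm{even}}(x)$ on each half-line. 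Hence this piece is already correct for both limits, and the whole argument reduces to analyzing the $g_{\textrm{odd}}$ contribution.

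For $\alpha\to 0$, the only $\alpha$-dependence in the $g_{\textrm{odd}}$ integrand is through the smooth factors $e^{2\alpha x}$, $e^{-2\alpha z}$ and the affine coefficients $(z\pm y\mp 4\alpha t)/(2t)$. Each of these has a trivial limit at $\alpha=0$, and the remaining Gaussian factors in $(z,y)$ provide an $\alpha$-uniform integrable majorant, so dominated convergence lets me move the limit inside. Once inside, the bracket becomes an exact $z$-derivative,
\[ \tfrac{z-y}{2t}e^{-(z-y)^2/4t} + \tfrac{z+y}{2t}e^{-(z+y)^2/4t} \;=\; -\partial_z\Big[e^{-(z-y)^2/4t} + e^{-(z+y)^2/4t}\Big], \]
so the outer integral in $z$ from $x$ to $+\infty$ telescopes to $e^{-(x-y)^2/4t} + e^{-(x+y)^2/4t}$, which is exactly the Neumann kernel. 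The case $x<0$ is symmetric.

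For $\alpha\to+\infty$, the natural change of variables is $z = x + u/(2\alpha)$: under it $e^{2\alpha(x-z)} = e^{-u}$ and $dz = du/(2\alpha)$, while the coefficients split as $(z-y+4\alpha t)/(2t) = 2\alpha + (z-y)/(2t)$ and $(z+y-4\alpha t)/(2t) = -2\alpha + (z+y)/(2t)$. The $\pm 2\alpha$ pieces combine with the Jacobian $1/(2\alpha)$ to give an $O(1)$ contribution, while the non-$\alpha$ pieces are $O(1/\alpha)$ after rescaling and vanish. Using $\int_0^\infty e^{-u}\,du = 1$ and $z\to x$ as $\alpha\to+\infty$, the limit of the inner $z$-integral is $e^{-(x-y)^2/4t} - e^{-(x+y)^2/4t}$; a symmetry argument then identifies $(4\pi t)^{-1/2}\int_0^{+\infty}[\,\cdots\,]\,g_{\textrm{odd}}(y)\,dy$ with $T_t g_{\textrm{odd}}(x)$ for $x>0$, and combined with the even piece one obtains $T_t g(x)$.

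To upgrade pointwise convergence to $L^p$, I would use that $g - C_g$ has compact support, whence each of $u^\alpha(t,\cdot) - C_g$, $u(t,\cdot) - C_g$ and $u^{\textrm{Neu}}(t,\cdot) - C_g$ inherits Gaussian tails at infinity with constants uniform in $\alpha$; this produces an $L^p$-integrable dominating function and dominated convergence yields the $L^p$ claim for $1\le p<\infty$. For $p=+\infty$, the same Gaussian tails handle large $|x|$, the explicit formulas give uniform convergence on compact subsets of $\bb R\setminus\{0\}$, and the Robin condition furnishes the estimate $|u^\alpha(t,0^+) - u^\alpha(t,0^-)| = O(1/\alpha)$ to control the behaviour at $x = 0$. \textbf{The delicate step} is the $\alpha\to+\infty$ calculation, where the $O(\alpha)$ coefficient has to be matched precisely with the $1/\alpha$-scale concentration of the exponential, and the subleading pieces must be shown to vanish uniformly in $x$, especially as $x\to 0$.
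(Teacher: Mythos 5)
Your plan is correct and is essentially the paper's own proof: the same even/odd decomposition reducing everything to the half-line semigroup acting on the (compactly supported) odd part, dominated convergence for $\alpha\to 0$ with the bracket recognized as an exact $z$-derivative, a Laplace-type concentration at $z=x$ for $\alpha\to+\infty$ (the paper phrases your change of variables as an average against the finite measure $\mathbf 1_{[x,+\infty)}(z)\,e^{-2\alpha z}dz$ followed by an integration by parts in $z$, which is the same computation), and uniform-in-$\alpha$ Gaussian kernel bounds plus dominated convergence for the $L^p$ claim. The only point to watch is your $p=+\infty$ remark: the estimate $|u^\alpha(t,0^+)-u^\alpha(t,0^-)|=O(1/\alpha)$ is pertinent only to the $\alpha\to+\infty$ limit (for $\alpha\to 0$ the limit itself has a jump) and must anyway be supplemented by equicontinuity up to $x=0^\pm$ to yield sup-norm control near the origin --- though the paper's own treatment of the $L^p$ step is no more detailed.
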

 The convergence above can be improved to some extent related to space and time simultaneously. Since this is not the main
issue of this paper, we do not enter into details on this.

\subsection{Ornstein-Uhlenbeck process}
Based on \cite{HS, kl}, we give here a characterization of the generalized Ornstein-Uhlenbeck process which is a  solution of
\begin{equation}\label{eq Ou}
d\mathcal{Y}_t=\A \mathcal{Y}_tdt+\sqrt{2\chi(\rho)}\B d\mc{W}_t\,,
\end{equation}
 where $\mc{W}_t$ is a space-time white noise of unit variance, in terms of a martingale problem. We will see later that this process governs the equilibrium fluctuations of the density of particles.  In spite of having a dependence of  $\mathcal{Y}_t$ on $\beta$, in order to keep notation simple, we do not index on it.

In what follows $\mathcal{S}'_{\beta}(\mathbb{R})$ denotes the space of bounded linear functionals $f:
\mathcal{S}_{\beta}(\mathbb{R})\rightarrow{\bb R}$ and  $\mathcal{D}([0,T],\mathcal{S}'_{\beta}(\mathbb{R}))$
(resp. $\mathcal{C}([0,T],\mathcal{S}'_{\beta}(\mathbb{R}))$)
 is the space of  c\`adl\`ag (resp. continuous) $\mathcal{S}'_{\beta}(\mathbb{R})$ valued functions endowed with the Skohorod topology.

\begin{proposition}\label{pp1}
There exists an unique random element $\mc Y$ taking values in the space $\mc C([0,T],\mathcal{S}'_{\beta}(\bb R))$ such
that:
\begin{itemize}
\item[i)] For every function $H \in \mathcal{S}_{\beta}(\bb R)$, $\mc M_t(H)$ and $\mc N_t(H)$ given by
\begin{equation}\label{lf1}
\begin{split}
&\mc M_t(H)= \mc Y_t(H) -\mc Y_0(H) -  \int_0^t \mc Y_s(\A H)ds\,,\\
&\mc N_t(H)=\big(\mc M_t(H)\big)^2 - 2\chi(\rho) \; t\,\|\B H\|_{2,\beta}^2
\end{split}
\end{equation}
are $\mc F_t$-martingales, where for each $t\in{[0,T]}$, $\mc F_t:=\sigma(\mc Y_s(H); s\leq t,  H \in \mathcal{S}_{\beta}(\bb R))$.

\item[ii)] $\mc Y_0$ is a gaussian field of mean zero and covariance given on $G,H\in{\mathcal{S}_{\beta}(\mathbb{R})}$ by
\begin{equation}\label{eq:covar1}
\mathbb{E}_\rho^\beta\big[ \mc Y_0(G) \mc Y_0(H)\big] =  \chi(\rho)\int_{\mathbb{R}} G(u) H(u) du\,.
\end{equation}
\end{itemize}
Moreover, for each $H\in\mc S_\beta(\bb R)$, the stochastic process $\{\Y_t(H)\,;\,t\geq 0\}$ is  gaussian , being the
distribution of $\Y_t(H)$  conditionally to
$\mc F_s$, for $s<t$, normal  of mean $\Y_s(T^\beta_{t-s}H)$ and variance $\int_0^{t-s}\Vert \B T^\beta_{r}
H\Vert^2_{2,\beta}\,dr$.
\end{proposition}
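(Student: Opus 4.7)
The plan is to follow the classical martingale-problem approach of Holley-Stroock, as adapted to generalized Ornstein-Uhlenbeck processes in Chapter 11 of \cite{kl}. The whole proof hinges on one device: promoting \eqref{lf1} to the time-dependent test function obtained by evolving $H$ along the semigroup $T^\beta_\cdot$ constructed in Section \ref{s4} and Proposition \ref{prop23}. Both the Gaussian character asserted in the ``moreover'' clause and the uniqueness of $\Y$ in law will come out of this single calculation, and existence will be handled by a direct stochastic-convolution construction at the end.

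I would fix $t>0$ and $H\in \mc S_\beta(\bb R)$, and set $H_s := T^\beta_{t-s}H$ for $s\in[0,t]$. A preliminary step is to verify that $H_s\in\mc S_\beta(\bb R)$ for every $s$, that $s\mapsto H_s$ is a $C^1$ curve in the Fr\'echet space $\mc S_\beta(\bb R)$ from Proposition \ref{frechet}, and that its time derivative equals $-\A H_s$. For $\beta\in[0,1)$ this is classical for the heat kernel \eqref{sem heat eq}; for $\beta\in(1,+\infty]$ it is immediate from the Neumann kernel \eqref{sem heat eq neu}; for $\beta=1$ one differentiates the explicit expressions in Proposition \ref{prop23} and checks that the Robin-type boundary condition of Definition \ref{S heatrobin} is preserved at every $s$. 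Approximating $s\mapsto H_s$ by Riemann sums of constant-in-time test functions and invoking linearity of the martingale problem \eqref{lf1}, I would then deduce that
\begin{equation*}
\tilde{\mc M}_s \;:=\; \Y_s(T^\beta_{t-s}H) \;-\; \Y_0(T^\beta_t H)
\end{equation*}
is an $\mc F_s$-martingale on $[0,t]$ whose predictable quadratic variation is the \emph{deterministic} function
\begin{equation*}
\langle \tilde{\mc M}\rangle_s \;=\; 2\chi(\rho)\int_0^s \|\B T^\beta_{t-r} H\|^2_{2,\beta}\,dr ,
\end{equation*}
since the drift coming from $\A H_s$ cancels against $\partial_s H_s=-\A H_s$, and the quadratic-variation term of \eqref{lf1} adds up along the Riemann approximation.

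Continuity of the paths of $\Y$ makes $\tilde{\mc M}$ continuous, and a standard conditional L\'evy / exponential-martingale argument then shows that, conditionally on $\mc F_0$, $\tilde{\mc M}$ is a centred Gaussian process independent of $\mc F_0$ with the above variance function. Specializing to $s=t$ and, more generally, running the same argument on subintervals $[s,t]$ yields precisely the asserted conditional Gaussian law of $\Y_t(H)$ given $\mc F_s$. Iterating over a finite grid $0<t_1<\cdots<t_k$, any finite-dimensional vector $(\Y_{t_1}(H_1),\ldots,\Y_{t_k}(H_k))$ is a linear combination of jointly Gaussian building blocks — Gaussian increments $\tilde{\mc M}^{(j)}$ and $\Y_0$ evaluated at suitable functions — so by part (ii) its joint law is completely determined. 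Since $\Y$ has continuous paths in $\mc S'_\beta(\bb R)$, its law is determined by its finite-dimensional distributions, giving uniqueness.

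For existence, I would give a direct construction: take $\Y_0$ to be a Gaussian field with covariance \eqref{eq:covar1}, take an independent cylindrical $(\mc F_s)$-Brownian motion $W$ on the Hilbert completion of $\mc S_\beta(\bb R)$ for $\|\cdot\|_{2,\beta}$, and set
\begin{equation*}
\Y_t(H) \;:=\; \Y_0(T^\beta_t H) \;+\; \sqrt{2\chi(\rho)}\int_0^t \B T^\beta_{t-s} H\,dW_s ,
\end{equation*}
then verify (i)--(ii) by a direct computation of expectations and conditional expectations. The main obstacle, to my mind, is the functional-analytic bookkeeping in the preliminary step: one has to check that the semigroups of Section \ref{s4} map $\mc S_\beta(\bb R)$ into itself and that $s\mapsto T^\beta_{t-s}H$ is smooth in the Fr\'echet topology, especially at the critical value $\beta=1$, where the non-local boundary condition of Definition \ref{S heatrobin} mixes the values at $0^+$ and $0^-$ and must be shown to survive under the explicit kernel of Proposition \ref{prop23} for every $s$.
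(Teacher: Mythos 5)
Your uniqueness argument is essentially the paper's: Section \ref{s5} also promotes the constant-test-function martingale problem \eqref{lf1} to the time-dependent functions $T^\beta_{S-s}H$, relying on exactly the expansion $T^\beta_{t+\eps}H-T^\beta_tH=\eps\,\A T^\beta_tH+o(\eps)$ and on the semigroup preserving $\mc S_\beta(\bb R)$ (the point you flag as the main obstacle, which the paper also treats briefly, noting all three semigroups are built from the gaussian kernel of Proposition \ref{prop23}). The only stylistic difference there is that the paper works directly with the complex exponential martingales $X^s_t(T^\beta_{S-s_j}H)$, multiplying them along a partition so that the deterministic compensator $\tfrac12\int\Vert\B T^\beta_{S-r}H\Vert^2_{2,\beta}dr$ appears without ever identifying the bracket of the time-changed field, whereas you first extract the martingale $\tilde{\mc M}$ with deterministic quadratic variation and then invoke L\'evy's characterization; your route needs the extra (routine but not free) step of justifying the bracket through the Riemann approximation, which the exponential-martingale trick sidesteps. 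Where you genuinely diverge is existence: the paper does not construct $\Y$ abstractly at all --- existence is obtained in Section \ref{s3} as a limit point of the density fluctuation fields $\mc Y^n$, whose tightness and martingale characterization are proved there for Theorem \ref{flu1}, so Section \ref{s5} only has to supply uniqueness. Your stochastic-convolution construction $\Y_t(H)=\Y_0(T^\beta_tH)+\sqrt{2\chi(\rho)}\int_0^t\B T^\beta_{t-s}H\,dW_s$ is a valid self-contained alternative, but to be complete it requires verifying the mild-to-weak equivalence (a stochastic Fubini argument) to get \eqref{lf1}, and establishing continuity of paths in $\mc S'_\beta(\bb R)$ (e.g.\ via Kolmogorov--Mitoma estimates, using Proposition \ref{frechet}); the paper gets both of these for free from the particle-system limit. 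In exchange, your construction decouples the abstract Proposition \ref{pp1} from the hydrodynamic analysis, which is a legitimate gain in generality.
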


 We call the  random element $\mc Y_\cdot$ the generalized Ornstein-Uhlenbeck process of
characteristics $\A$ and $\B$.  From the second equation in \eqref{lf1} and Levy's Theorem on the martingale characterization of Brownian motion, the process
\begin{equation}\label{Bmotion}
 \mc M_t(H)(2\chi(\rho)\|\B H\|_{2,\beta}^2)^{-1/2}
\end{equation}
is a standard Brownian motion. Therefore, in view of  Proposition \ref{pp1}, it makes sense to say that $\Y$ is
the formal solution of \eqref{eq Ou}.

\subsection{Equilibrium Density Fluctuations}
In order to establish the C.L.T. for the empirical measure under the invariant state $\nu_{\rho}$,
we need to introduce the density fluctuation field as the linear functional acting on test functions $H$ as
\begin{equation*}\label{flut}
 \mc Y^n_{t}(H)= \frac{1}{\sqrt n}\sum_{x\in{\mathbb{Z}}}H\Big(\frac{x}{n}\Big)(\eta_{tn^2}(x)-\rho).
 \end{equation*}

We are in position to state the fluctuations for the density of particles.
\begin{theorem}[C.L.T. for the density of particles]\label{flu1}

\quad

Consider the Markov process $\{\eta_{tn^2}: t\geq{0}\}$ starting from the invariant state $\nu_\rho$.
Then, the sequence of processes $\{\mathcal{Y}_{t}^n\}_{ n\in{\bb N}}$ converges in distribution, as $n\rightarrow{+\infty}$, with respect to the
Skorohod topology
of $\mathcal{D}([0,T],\mathcal{S}'_{\beta}(\bb R))$ to   $\mathcal{Y}_t$
in $\mathcal{C}([0,T],\mathcal{S}'_{\beta}(\bb R))$, the generalized Ornstein-Uhlenbeck process of characteristics $\Delta_\beta,\nabla_\beta$ which is the formal solution of the equation
\begin{equation} \label{OU}
d\mathcal{Y}_t=\A \mathcal{Y}_tdt+\sqrt{2\chi(\rho)} \B d\mc{W}_t,
\end{equation}
 where $\mc{W}_t$ is a space-time white noise of unit variance and the operators $\A$ and $\B$ were defined in Subsection
\ref{operator A beta}.
\end{theorem}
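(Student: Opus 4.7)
The plan is to adapt the classical Holley--Stroock martingale approach to equilibrium fluctuations, as developed in \cite{kl}, to the present slow-bond setting. For each test function $H\in\mc S_\beta(\bb R)$, Dynkin's formula produces a martingale decomposition
\[
\mc M^n_t(H)\;=\;\mc Y^n_t(H)-\mc Y^n_0(H)-\int_0^t n^2\mc L_n\mc Y^n_s(H)\,ds\,,
\]
together with the compensator of its square expressed via the carr\'e du champ. A summation by parts rewrites $n^2\mc L_n\mc Y^n_s(H)=\mc Y^n_s(\mc L_n^{\ast}H)$, where $\mc L_n^\ast H(x/n)$ coincides with the standard discrete Laplacian $n^2\bigl[H(\tfrac{x+1}{n})-2H(\tfrac{x}{n})+H(\tfrac{x-1}{n})\bigr]$ at every site not adjacent to the slow bond, and differs only at $x=-1,0$, where the conductance $\alpha n^{-\beta}$ replaces one of the two unit factors.

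The crux of the argument is a \emph{Local Replacement} showing that $\mc Y^n_s(\mc L_n^\ast H-\A H)\to 0$ in $L^2(\bb P_\rho^\beta)$ uniformly in $s\in[0,T]$. Under the Bernoulli product state $\nu_\rho$ this reduces to controlling $\tfrac{\chi(\rho)}{n}\sum_x\bigl[\mc L_n^\ast H(x/n)-\A H(x/n)\bigr]^2$. For $x\notin\{-1,0\}$ the integrand is $O(n^{-2})$ by Taylor expansion and contributes negligibly. At $x=-1,0$, Taylor expansions around the one-sided limits at $0$ produce \emph{a priori} divergent terms of order $n$ (when $\beta=1$) or $n^{1-\beta}$ (when $\beta\in(0,1)$), and the whole analysis rests on a cancellation between the contributions at the two sites. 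The relation forcing that cancellation is exactly the regime-dependent boundary condition built into the definition of $\mc S_\beta(\bb R)$: smoothness of $H$ across $0$ (so that $\mc S_\beta=\mc S(\bb R)$) for $\beta\in[0,1)$, the Robin relation $H^{(1)}(0^\pm)=\alpha\bigl(H(0^+)-H(0^-)\bigr)$ for $\beta=1$, and $H^{(1)}(0^\pm)=0$ for $\beta\in(1,+\infty]$. In each case the cancellation is sharp and leaves only an $O(1)$ residue at the two boundary sites, contributing $O(n^{-1/2})$ to the field.

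A parallel computation handles the quadratic variation. Integration by parts rewrites the expected compensator as $2\chi(\rho)\,t\cdot n\sum_y\xi^n_{y,y+1}\bigl[H(\tfrac{y+1}{n})-H(\tfrac{y}{n})\bigr]^2$; the fast-bond part converges to $2\chi(\rho)\,t\int(H')^2\,du$, while the slow bond contributes $2\chi(\rho)\,t\cdot\alpha n^{1-\beta}\bigl(H(0^+)-H(-\tfrac{1}{n})\bigr)^2$, which vanishes for $\beta\in[0,1)$ thanks to $H(0^+)=H(0^-)$, is trivially $o(1)$ for $\beta\in(1,+\infty]$, and for $\beta=1$ yields exactly the boundary contribution matching the $H(0)^2$ correction in the norm $\|\B H\|_{2,\beta}^2$. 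Tightness of $\{\mc Y^n_\cdot\}$ in $\mc D([0,T],\mc S'_\beta(\bb R))$ then follows from Mitoma's criterion, applicable by Proposition~\ref{frechet}, together with the Aldous criterion for each real process $\{\mc Y^n_\cdot(H)\}$, both consequences of the decomposition above. The initial field $\mc Y^n_0$ converges in distribution to the Gaussian field with covariance \eqref{eq:covar1} by the CLT for i.i.d.\ Bernoulli variables, and any limit point is continuous in time and satisfies the martingale problem in Proposition~\ref{pp1}, which characterizes it uniquely. The principal obstacle throughout is the sharp Local Replacement at the slow bond, whose regime-specific form is what dictates the definition of $\mc S_\beta(\bb R)$ and brings both the operator $\A$ and the correct boundary conditions into the limit.
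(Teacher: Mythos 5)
Your overall skeleton (Dynkin decomposition, quadratic variation, Mitoma plus Aldous for tightness, Gaussian initial field, uniqueness via the martingale problem of Proposition \ref{pp1}) is the same as the paper's, but the key technical step is handled incorrectly. You claim that the error term $\mc Y^n_s(\mc L_n^*H-\A H)$ can be controlled by the static variance $\tfrac{\chi(\rho)}{n}\sum_x\bigl[n^2\bb L_nH(\tfrac{x}{n})-\A H(\tfrac{x}{n})\bigr]^2$, with the divergent boundary contributions killed by a Taylor cancellation forced by the definition of $\mc S_\beta(\bb R)$. That works only for $\beta=1$: there the Robin relation $\alpha(H(0^+)-H(0^-))=H^{(1)}(0^\pm)$ cancels the order-$n$ terms \emph{within each} of the two coefficients at $x=-1,0$, and indeed the paper's proof in that regime is a deterministic $O(t/\sqrt n)$ bound. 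For $\beta\in(0,1)$ there is no such cancellation available: $\mc S_\beta(\bb R)=\mc S(\bb R)$ imposes only smoothness, $H^{(1)}(0)$ is generically nonzero, and the coefficients at $x=-1$ and $x=0$ are $\mp(n-\alpha n^{1-\beta})H^{(1)}(0)+O(1)$, i.e.\ of order $n$ (not $n^{1-\beta}$ as you state). They have opposite signs, but they multiply $\bar\eta(-1)$ and $\bar\eta(0)$, which are \emph{independent} under $\nu_\rho$, so nothing cancels in the static variance: your bound is of order $n$ and diverges. The same failure occurs for $\beta\in(1,3/2]$, where the jump $H(0^+)-H(0^-)$ makes the coefficients of order $n^{2-\beta}$ and the static bound is $O(n^{3-2\beta})$.

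What is genuinely needed — and what the paper's Local Replacement (Lemma \ref{2orderBG}, Remark \ref{useful remark}) provides — is a dynamical estimate on the \emph{time integral}: one must show
\begin{equation*}
\lim_{n\to+\infty}\mathbb{E}_\rho^\beta\Big[\Big(\int_0^t\sqrt n\,\{\bar\eta_{sn^2}(-1)-\bar\eta_{sn^2}(0)\}\,ds\Big)^2\Big]=0
\end{equation*}
(and its analogue with $n^{3/2-\beta}$ for $\beta>1$). This is done via the Kipnis--Varadhan bound (Proposition A1.6.1 of \cite{kl}) in terms of the $H_{-1}$-norm and the Dirichlet form, replacing $\bar\eta(-1)$ and $\bar\eta(0)$ by block averages over boxes of size $\ell=\varepsilon n$, which yields the bound $C(t\alpha n^{\beta-1}+t\ell/n)+\tilde C\,t^2n/\ell^2$ and vanishes only after $n\to+\infty$ followed by $\varepsilon\to0$; the factor $\alpha n^{\beta}$ in the bound comes precisely from crossing the slow bond. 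You invoke the name ``Local Replacement'' but then describe a static computation that is not a replacement at all and fails in exactly the regimes where the work lies. (Also, a fixed-time statement ``$\mc Y^n_s(\mc L_n^*H-\A H)\to0$ in $L^2$ uniformly in $s$'' is false in those regimes; the time average is essential.) A smaller omission: to pass the martingale property and the quadratic variation to the limit you need uniform integrability, which the paper obtains from the $L^2$ convergence of Lemma \ref{martingaleL2bounds} together with a fourth-moment bound; this should at least be mentioned.
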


\subsection{Equilibrium Current Fluctuations}\label{sub_eq}
Next, we introduce the notion of current of particles through a fixed bond for our microscopic dynamics of generator
$\mc L_n$ evolving on the diffusive time scale $tn^2$  and starting from the invariant
state $\nu_{\rho}$.

 For a site $x\in{\bb Z}$, denote
by ${J}^n_{x,x+1}(t)$ the current of particles over the bond $\{x,x+1\}$, which is the total number of jumps from the
site
$x$ to the site $x+1$ minus
the total number of jumps from the site $x+1$ to the site $x$ in the time interval $[0,tn^2]$.

 Let $u\in{\mathbb{R}}$ be a macroscopical point, to which we associate in the microscopical lattice the bond of vertices
$\{\lfloor un \rfloor -1, \lfloor un \rfloor\}$. Here  $\lfloor un\rfloor$ denotes  the biggest integer not larger than $un$. To simplify notation, we will simply write
\begin{equation*}
{J}^n_{u}(t):= {J}^n_{\lfloor un \rfloor -1,\lfloor un \rfloor}(t)\,.
\end{equation*}
Now, we state the C.L.T. for the current. For that purpose we need to introduce some notation.
Denote by $\Phi_{2t}(\cdot)$  the tail of the distribution function of a gaussian random variable with
mean zero and variance $2t$, that is, for $x\in{\mathbb{R}}$,
\begin{equation*}
\Phi_{2t}(x):=\int^{{+\infty}}_x\frac{e^{-u^2/{4t}}}{\sqrt{4\pi t}}du \,.
\end{equation*}

\begin{theorem}[C.L.T. for the current of particles] \label{current clt}

\quad

Under $\bb P_{\rho}^\beta$, for every $t\geq{0}$ and every $u\in\bb R$,
\begin{equation*}
 \frac{{J}^n_u(t)}{\sqrt n}\xrightarrow[{n\rightarrow{+\infty}}]\,{J}_u({t})
\end{equation*}
in the sense of finite-dimensional distributions, where ${J}_u({t})$  is gaussian with covariances given by
\medskip

$\bullet$ for $\beta\in{[0,1)}$,
\begin{equation}\label{fBM_cov}
\mathbb{E}_\rho^\beta[{J}_u(t){J}_u(s)]=\chi(\rho)\Big(\sqrt{\frac{t}{\pi}}+\sqrt{\frac{s}{\pi}}-\sqrt{\frac{t-s}{
\pi}}\Big)\,,
\end{equation}
that is $J_u(t)$ is a fractional Brownian motion of Hurst exponent $1/4$.

\quad

$\bullet$ for $\beta=1$,
\begin{equation*}
\begin{split}
\mathbb{E}_\rho^\beta[{J}_u(t){J}_u(s)]=\chi(\rho)\Big(&\sqrt{\frac{t}{\pi}}+\frac{\Phi_{2t}(2u+4\alpha
t)\,e^{4\alpha u+4\alpha^2t}}{2\alpha}-\frac{\Phi_{2t}(2u)}{2\alpha}\\
+&\sqrt{\frac{s}{\pi}}+\frac{\Phi_{2s}(2u+4\alpha
s)\, e^{4\alpha u+4\alpha^2s}}{2\alpha}-\frac{\Phi_{2s}(2u)}{2\alpha}\\
-&\sqrt{\frac{t-s}{\pi}}-\frac{\Phi_{2(t-s)}(2u+4\alpha (t-s))\,e^{4\alpha
u+4\alpha^2(t-s)}}{2\alpha}+\frac{\Phi_{2(t-s)}(2u)}{2\alpha}\Big)\,.
\end{split}
\end{equation*}

\quad

$\bullet$ for  $\beta\in (1,{+\infty}]$,
\begin{equation}\label{neumann covariance}
\begin{split}
\mathbb{E}_\rho^\beta[{J}_u(t){J}_u(s)]=\chi(\rho)\Big(&\sqrt{\frac{t}{\pi}}\Big[1-e^{-u^2/t}\Big]+2u\,\Phi_{2t}(2u)\\+&\sqrt{\frac{s}{\pi}}\Big[1-e^{
-u^2/s}\Big]+2u\,\Phi_{2s}(2u)\\
-&\sqrt{\frac{t-s}{\pi}}\Big[1-e^{-u^2/(t-s)}\Big]-2u\,\Phi_{2(t-s)}(2u)\Big).
\end{split}
\end{equation}

\end{theorem}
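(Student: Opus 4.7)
My plan is to deduce the current fluctuation CLT from the density fluctuation result Theorem \ref{flu1} via the microscopic conservation law
\begin{equation*}
\frac{J^n_u(t)}{\sqrt n}\;=\;\mc Y^n_t(H_u)-\mc Y^n_0(H_u),\qquad H_u(x)\,:=\,\mathbf{1}_{[u,+\infty)}(x),
\end{equation*}
which is immediate: the net flow across $\{\lf un\rf-1,\lf un\rf\}$ up to time $tn^2$ equals the change in the number of particles sitting on $\{x\geq\lf un\rf\}$. Since $H_u\notin\mc S_\beta(\bb R)$, I would approximate $H_u$ by a two-parameter family $H_u^{\eps,M}\in\mc S_\beta(\bb R)$ that smooths the jump at $u$ on scale $\eps$, truncates the tail at scale $M$, and (in the case $\beta=1$ with $u\neq 0$) vanishes in a neighbourhood of the origin so that the Robin condition there is trivially satisfied. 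For each fixed $(\eps,M)$, Theorem \ref{flu1} yields the joint convergence of the finite-dimensional vectors $\bigl(\mc Y^n_{t_i}(H_{u_j}^{\eps,M})-\mc Y^n_0(H_{u_j}^{\eps,M})\bigr)$ to the corresponding Gaussian linear functionals of the limit OU field. The remaining task is to show that the approximation remainder $\mc Y^n_t(H_u-H_u^{\eps,M})-\mc Y^n_0(H_u-H_u^{\eps,M})$ vanishes in $L^2(\bb P_\rho^\beta)$ as $\eps\to 0$, $M\to+\infty$, \emph{uniformly in $n$}; this reduces to a Kipnis--Varadhan-type $H^{-1}$ bound for an additive functional of the reversible process in equilibrium.

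Once $J_u(t):=\mc Y_t(H_u)-\mc Y_0(H_u)$ is identified in this limiting sense, its covariance follows from the equilibrium OU covariance $\bb E_\rho^\beta[\mc Y_r(F)\mc Y_q(G)]=\chi(\rho)\int T^\beta_{|r-q|}F(x)G(x)\,dx$, which is obtained via the Markov property and stationarity of the process in Proposition \ref{pp1}. Expanding $\bb E_\rho^\beta[(\mc Y_t(H_u)-\mc Y_0(H_u))(\mc Y_s(H_u)-\mc Y_0(H_u))]$ for $t\geq s$ and using $H_u^2=H_u$ supported on $[u,+\infty)$ gives
\begin{equation*}
\bb E_\rho^\beta[J_u(t)J_u(s)]\;=\;\chi(\rho)\int_u^{+\infty}\!\bigl[(1-T^\beta_tH_u)+(1-T^\beta_sH_u)-(1-T^\beta_{t-s}H_u)\bigr](x)\,dx,
\end{equation*}
so the three regimes reduce to computing $I_\beta(r,u):=\int_u^{+\infty}(1-T_r^\beta H_u)(x)\,dx$. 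For $\beta\in[0,1)$ one has $T_rH_u(x)=\Phi_{2r}(u-x)$, whence $I_\beta(r,u)=\int_0^{+\infty}\Phi_{2r}(y)\,dy=\sqrt{r/\pi}$, recovering \eqref{fBM_cov}. For $\beta\in(1,+\infty]$, formula \eqref{sem heat eq neu} yields $T^{\textrm{Neu}}_rH_u(x)=\Phi_{2r}(u-x)+\Phi_{2r}(u+x)$ for $x,u>0$, and an elementary Gaussian computation produces $I_\beta(r,u)=\sqrt{r/\pi}\bigl(1-e^{-u^2/r}\bigr)+2u\,\Phi_{2r}(2u)$, giving \eqref{neumann covariance}. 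For $\beta=1$, inserting the kernel of Proposition \ref{prop23}, decomposing $H_u$ into its even and odd parts, and evaluating the resulting Gaussian integrals produces the $\alpha$-dependent terms $\tfrac{1}{2\alpha}\Phi_{2r}(2u+4\alpha r)e^{4\alpha u+4\alpha^2 r}-\tfrac{1}{2\alpha}\Phi_{2r}(2u)$, consistent in the $\alpha\to0$ limit with the Neumann case and in the $\alpha\to+\infty$ limit with the $[0,1)$ case.

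The main obstacle is the uniform-in-$n$ control of the approximation remainder in the first step. A naive Chebyshev bound via the martingale decomposition of $\mc Y^n_t(H_u^{\eps,M})-\mc Y^n_0(H_u^{\eps,M})$ fails, because $\Vert\B H_u^{\eps,M}\Vert_{2,\beta}\to+\infty$ as $\eps\to 0$; one must instead exploit the cancellation between the two time slices, an equilibrium $H^{-1}$-type estimate that effectively encodes the $\sqrt n$-scaling of the current itself. A secondary, purely algebraic difficulty is the $\beta=1$ covariance computation, where the two distinct half-line expressions of Proposition \ref{prop23} must be combined into a single formula valid for both signs of $u$, which demands careful bookkeeping of the even/odd decomposition of $H_u$ and of the $\alpha$-dependent exponential factors in the Gaussian integrals.
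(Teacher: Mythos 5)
Your overall skeleton (write $J^n_u(t)/\sqrt n$ as a two-time increment of the density field tested against the Heaviside $H_u$, replace $H_u$ by admissible test functions, invoke Theorem \ref{flu1}, then compute $\langle H_u,H_u-T^\beta_r H_u\rangle$ for the three semigroups) is the same as the paper's, and your second half is sound: the reduction of covariances to the single quantity $\int_u^{+\infty}(1-T^\beta_r H_u)(x)\,dx$ and the Gaussian evaluations for $\beta\in[0,1)$ and $\beta\in(1,+\infty]$ match the paper's computations, and for $\beta=1$ you correctly identify (though do not carry out) the required integral. However, the crucial step -- the uniform-in-$n$ replacement of $H_u$ -- has a genuine gap as you set it up. By mollifying the jump at $u$ on scale $\eps$ and truncating at $M$, your error function $f=H_u-H^{\eps,M}_u$ still contains the full unit jump of $H_u$ across a single bond (and a non-decaying tail beyond $M$). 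In the Dynkin decomposition $\mc Y^n_t(f)-\mc Y^n_0(f)=\mc M^n_t(f)+\mc I^n_t(f)$, the martingale part has second moment of order $t\cdot\tfrac1n\sum_x(\nabla_n f(\tfrac xn))^2$, and the single bond carrying the jump contributes $\tfrac1n\cdot n^2=n$; so the remainder is not even bounded in $n$ for fixed $\eps$, let alone small as $\eps\to0$. The ``Kipnis--Varadhan-type $H^{-1}$ bound'' you appeal to applies to the additive functional $\mc I^n_t(f)$ only (and its bound involves the same divergent gradient norm, cf.\ Lemma \ref{integralL2bounds}); it does not apply to the two-time difference itself, and the cancellation between $\mc M^n_t(f)$ and $\mc I^n_t(f)$ that you would need is precisely what is missing -- no lemma in your plan produces it, and producing it amounts to re-proving the $\sqrt n$-scaling of the current by other means (e.g.\ duality/two-point correlations, which you do not invoke).

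The paper's resolution is different at exactly this point and is the key idea: approximate $H_u$ by $G^u_j(x)=(1-\tfrac{x-u}{j})^+H_u(x)$, which \emph{retains} the jump at $u$ and instead flattens the tail with slope $1/j$. Then $H_u-G^u_j$ is continuous with gradient $O(1/j)$ spread over an interval of length $j$, so Lemmas \ref{martingaleL2bounds} and \ref{integralL2bounds} give bounds of order $1/j$ for both the martingale and the integral term, uniformly in $n$ (Lemma \ref{prop for current}); moreover the conservation law \eqref{J} plus discrete summation by parts is what gives meaning to these expressions in the first place, since under $\nu_\rho$ there are infinitely many particles and $\mc Y^n_t(H_u)$ is not a convergent sum -- your claim that the identity for $J^n_u(t)/\sqrt n$ is ``immediate'' glosses over this. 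The jump itself is then harmless: it only needs to be handled at fixed times and for the limit field, where plain $L^2(\bb R)$ approximation of $G^u_j$ by functions $H^u_{k,j}\in\mc S_\beta(\bb R)$ (constant near the origin) suffices because fixed-time variances are controlled by $\Vert\cdot\Vert_2$, not by a gradient norm. To repair your proof you should replace your mollification by this tail-flattening approximation (or supply an independent estimate, e.g.\ via self-duality, for the variance of the two-time increment); as written, the central estimate would fail.
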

It is of particular interest the covariance at $u=0$, corresponding to the current through the slow bond $\{-1,0\}$. If $\beta\in [0,1)$,
the covariance corresponds to the one of a fractional Brownian motion of Hurst exponent $1/4$. If $\beta\in (1, {+\infty}]$, the
covariance equals zero as expected, since the Neumann's boundary conditions at $x=0$ make of it an isolated boundary. Finally, for $\beta=1$, we
obtain a family, indexed in the parameter $\alpha$, of gaussian processes interpolating the
fractional Brownian motion of parameter $1/4$ and the degenerate process identically equal to zero. Such interpolation is made
clear in the next  corollary. Before its statement, we emphasize that at the critical value $\beta=1$, the limit of $J^n_u(t)/\sqrt n$ does depend
on $\alpha$. Let us denote it by
${J}^\alpha_u({t}).$
\begin{corollary} \label{limit robin current}

For every $t\geq{0}$ and every $u\in\bb R$,

\begin{equation*}
{J}^\alpha_u({t})\xrightarrow[{\alpha\rightarrow{+\infty}}]\,{J}_u({t})\,,
\end{equation*}
 where $J_u(t)$ is the fractional  Brownian motion
with Hurst exponent $1/4$
and
\begin{equation*}
{J}^\alpha_u({t})\xrightarrow[{\alpha\rightarrow{0}}]\,{J}_u({t})\,,
\end{equation*}
where $J_u(t)$ is the   gaussian process with covariances given by  \eqref{neumann
covariance}. The convergence is in the sense of finite dimensional distributions.
 \end{corollary}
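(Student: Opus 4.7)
Since $J^\alpha_u$ is a centered Gaussian process for every $\alpha$, as is each of the two candidate limit processes (the fractional Brownian motion of Hurst exponent $1/4$ and the Gaussian process with covariance \eqref{neumann covariance}), convergence of finite-dimensional distributions is equivalent to pointwise convergence of two-point covariance functions. Consequently, it suffices to take the $\beta=1$ covariance formula in Theorem \ref{current clt} and show, for each fixed $u\in\bb R$ and $0\le s\le t$, that it tends to \eqref{fBM_cov} as $\alpha\to+\infty$ and to \eqref{neumann covariance} as $\alpha\to 0$. The three $\alpha$-dependent summands share the same structure, so the work reduces to analyzing, for a generic $\tau>0$, the block
\begin{equation*}
B_\tau(\alpha, u) \;:=\; \frac{\Phi_{2\tau}(2u+4\alpha\tau)\,e^{4\alpha u+4\alpha^2\tau} - \Phi_{2\tau}(2u)}{2\alpha},
\end{equation*}
since the full covariance equals $\chi(\rho)\bigl(\sqrt{t/\pi}+\sqrt{s/\pi}-\sqrt{(t-s)/\pi}\bigr)+\chi(\rho)\bigl(B_t+B_s-B_{t-s}\bigr)$.

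For the $\alpha\to+\infty$ limit, the change of variables $v = 2u+4\alpha\tau+w$ in the definition of $\Phi_{2\tau}$, combined with the identity $(2u+4\alpha\tau)^2/(4\tau)=u^2/\tau+4\alpha u+4\alpha^2\tau$, gives
\begin{equation*}
\Phi_{2\tau}(2u+4\alpha\tau)\,e^{4\alpha u+4\alpha^2\tau} \;=\; \frac{e^{-u^2/\tau}}{\sqrt{4\pi\tau}}\int_0^{+\infty}e^{-(u/\tau+2\alpha)w-w^2/(4\tau)}\,dw.
\end{equation*}
Dominated convergence shows this quantity is $O(1/\alpha)$: for $\alpha$ large the integrand is bounded by $e^{-2\alpha w}$, and the integral is at most $1/(u/\tau+2\alpha)\sim 1/(2\alpha)$. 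The additional division by $2\alpha$ then yields $B_\tau(\alpha,u)=O(\alpha^{-2})\to 0$, together with $-\Phi_{2\tau}(2u)/(2\alpha)\to 0$. Only the prefactor $\chi(\rho)\bigl(\sqrt{t/\pi}+\sqrt{s/\pi}-\sqrt{(t-s)/\pi}\bigr)$ survives, which is exactly \eqref{fBM_cov}.

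For the $\alpha\to 0$ limit, write $g_\tau(\alpha):=\Phi_{2\tau}(2u+4\alpha\tau)\,e^{4\alpha u+4\alpha^2\tau}$. Since $g_\tau(0)=\Phi_{2\tau}(2u)$, the definition of derivative gives $B_\tau(\alpha,u)\to g_\tau'(0)/2$. A direct differentiation, using $\tfrac{d}{dx}\Phi_{2\tau}(x)=-e^{-x^2/(4\tau)}/\sqrt{4\pi\tau}$, yields
\begin{equation*}
g_\tau'(0) \;=\; -\frac{4\tau}{\sqrt{4\pi\tau}}\,e^{-u^2/\tau}+4u\,\Phi_{2\tau}(2u) \;=\; -2\sqrt{\tau/\pi}\,e^{-u^2/\tau}+4u\,\Phi_{2\tau}(2u),
\end{equation*}
so that $B_\tau(\alpha,u)\to -\sqrt{\tau/\pi}\,e^{-u^2/\tau}+2u\,\Phi_{2\tau}(2u)$. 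Substituting this into $\chi(\rho)\bigl(\sqrt{t/\pi}+\sqrt{s/\pi}-\sqrt{(t-s)/\pi}\bigr)+\chi(\rho)\bigl(B_t+B_s-B_{t-s}\bigr)$ reproduces \eqref{neumann covariance} term by term. None of the steps poses real difficulty; the only delicate point is the $\alpha\to+\infty$ asymptotic, where $\Phi_{2\tau}(2u+4\alpha\tau)$ and $e^{4\alpha u+4\alpha^2\tau}$ individually behave like $e^{-4\alpha^2\tau}$ and $e^{4\alpha^2\tau}$, respectively, and the Laplace-type rewriting above is needed to extract the genuine polynomial $1/\alpha$ decay of their product.
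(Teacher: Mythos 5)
Your proposal is correct and follows essentially the same route as the paper: reduce finite-dimensional convergence of centered Gaussian processes to pointwise convergence of the covariance, then evaluate the two limits of the $\alpha$-dependent block $\bigl(\Phi_{2\tau}(2u+4\alpha\tau)e^{4\alpha u+4\alpha^2\tau}-\Phi_{2\tau}(2u)\bigr)/(2\alpha)$. The only (immaterial) differences are technical: the paper obtains the $\alpha\to 0$ limit by L'H\^opital's rule, which is equivalent to your difference-quotient computation of $g_\tau'(0)$, and handles $\alpha\to+\infty$ by quoting the Gaussian tail bound $\int_a^{+\infty}e^{-x^2/2}dx\leq a^{-1}e^{-a^2/2}$, which is exactly what your change-of-variables/Laplace estimate reproduces.
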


\subsection{Fluctuations of a tagged particle}
As a consequence of last construction, we are able to deduce the behavior of a single tagged particle as done in \cite{g,jl}. For that purpose, fix $\rho\in{(0,1)}$, $u>0$ and consider $\eta_{tn^2}$ starting from the measure $\nu_{\rho}$ conditioned to have a particle at the site $\lfloor un\rfloor$, that we denote by $\nu_\rho^u$. More precisely, $\nu_{\rho}^u(\cdot):=\nu_\rho(\,\cdot\,|\eta_{tn^2}(\lfloor un\rfloor) =1)$. We notice that from symmetry arguments, the same reasoning holds for $u<0$.
 We couple the system starting from $\nu_\rho^u$ and starting from $\nu_\rho$, in such a way that both
processes differ at most in one site at any given time. Then, the analogue of the results stated in Theorems \ref{flu1} and \ref{current clt} for the starting measure  $\nu_\rho^u$ follow from those results where the system is taken starting from $\nu_\rho$.

Let $X^n_u(t)$ denote the position at time $tn^2$ of a tagged particle initially at the site $\lfloor un\rfloor$.
Since we are in dimension one, the order between particles is preserved and as a consequence, for all $k\geq 1$,
\begin{equation}\label{relation tp cur dp}
\{X^n_u(t)\geq{k}\}=\Big\{ {J}^n_{u}(t)\geq{\sum_{x=\lfloor un \rfloor}^{\lfloor un \rfloor+k-1}\eta_{tn^2}(x)}\Big\}.
\end{equation}
 Last relation together with Theorem \ref{current clt} gives us
\begin{theorem}[C.L.T. for a tagged particle] \label{tagged clt}

\quad

Under $\bb{P}^\beta_{\nu^u_\rho}$, for all $\beta\in{[0,+\infty]}$,  every $u\in\bb R$ and $t\geq{0}$
\begin{equation*}
\frac{{X}^n_u(t)}{\sqrt{n}}\xrightarrow[{t\rightarrow{+\infty}}]\,X_u(t)
\end{equation*}
in the sense of finite-dimensional distributions, where $X_u(t)=J_u(t)/\rho$ in law and $J_u(t)$ is the same as in Theorem
\ref{current clt}. In particular, the covariances of the process $X_u(t)$ are given by
$\mathbb{E}_\rho^\beta[X_u(t)X_u(s)]=\rho^{-2}\,\mathbb{E}_\rho^\beta[{J}_u(t){J}_u(s)]$.
\end{theorem}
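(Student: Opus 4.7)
The plan is to derive the convergence from Theorem \ref{current clt} through the identity (\ref{relation tp cur dp}), which expresses $\{X^n_u(t)\geq k\}$ as a joint current/occupation event. I proceed in three steps: reduction to the equilibrium starting measure, concentration of the occupation sum, and identification of the limit.

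\textbf{Step 1: coupling with the equilibrium dynamics.} Following \cite{g,jl}, couple the process $\eta_{\cdot n^2}$ started from $\nu_\rho^u$ with a copy $\tilde\eta_{\cdot n^2}$ started from $\nu_\rho$, in such a way that at any time the two configurations differ in at most one site (the single discrepancy performs a nearest-neighbour symmetric random walk with slow bond). Because the density field $\mc Y^n_t(H)$ and the current $J^n_u(t)$ are stable under single-site modifications up to an error of order $O(n^{-1/2})$, Theorems \ref{flu1} and \ref{current clt} transfer verbatim to $\bb P^\beta_{\nu_\rho^u}$.

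\textbf{Step 2: concentration of the occupation sum.} Fix $a>0$ and set $k_n:=\lfloor a\sqrt n\rfloor$. By (\ref{relation tp cur dp}),
\begin{equation*}
 \Big\{\frac{X^n_u(t)}{\sqrt n}\geq a\Big\} \;=\; \Big\{ \frac{J^n_u(t)}{\sqrt n} \;\geq\; \frac{1}{\sqrt n}\!\!\!\!\sum_{x=\lfloor un\rfloor}^{\lfloor un\rfloor + k_n -1}\!\!\!\!\eta_{tn^2}(x)\Big\}.
\end{equation*}
Using the coupling of Step 1 to replace $\eta_{tn^2}$ by $\tilde\eta_{tn^2}$ at negligible cost, and using that $\nu_\rho$ is invariant (so the time-$tn^2$ marginal of $\tilde\eta$ is again Bernoulli product with parameter $\rho$),
\begin{equation*}
\bb E_\rho^\beta\!\Big[\sum_{x=\lfloor un\rfloor}^{\lfloor un\rfloor + k_n-1}\!\!\tilde\eta_{tn^2}(x)\Big]=\rho\, k_n, \qquad \Var_{\nu_\rho}\!\Big(\sum_{x=\lfloor un\rfloor}^{\lfloor un\rfloor + k_n-1}\!\!\tilde\eta_{tn^2}(x)\Big)=\chi(\rho)\, k_n.
\end{equation*}
Chebyshev's inequality yields
\begin{equation*}
 \frac{1}{\sqrt n}\sum_{x=\lfloor un\rfloor}^{\lfloor un\rfloor + k_n-1}\big(\eta_{tn^2}(x)-\rho\big) \;\xrightarrow[n\to+\infty]{\bb P}\; 0,
\end{equation*}
since the standard deviation is of order $n^{1/4}=o(\sqrt n)$, and the deterministic part $\rho k_n/\sqrt n$ converges to $\rho a$.

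\textbf{Step 3: limit identification.} Combining the two previous steps with Theorem \ref{current clt} and Slutsky's lemma,
\begin{equation*}
 \bb P^\beta_{\nu_\rho^u}\!\Big[\frac{X^n_u(t)}{\sqrt n}\geq a\Big] \;\longrightarrow\; \bb P\!\big[J_u(t)\geq \rho\, a\big],
\end{equation*}
for every continuity point $a$ of the limiting distribution; the case $a<0$ is symmetric, via the analogous identity $\{X^n_u(t)\leq -k\}=\{J^n_u(t)\leq -\sum_{x=\lfloor un\rfloor-k}^{\lfloor un\rfloor-1}\eta_{tn^2}(x)\}$. This gives $X_u(t)=J_u(t)/\rho$ in one-dimensional law. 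For the finite-dimensional extension to $(t_1,\dots,t_m)$ and levels $(a_1,\dots,a_m)$, one repeats the argument jointly in time, invoking the joint convergence of $J^n_u(\cdot)/\sqrt n$ provided by Theorem \ref{current clt} and the simultaneous concentration of the $m$ occupation sums. The covariance formula $\bb E^\beta_\rho[X_u(t)X_u(s)]=\rho^{-2}\bb E^\beta_\rho[J_u(t)J_u(s)]$ is then immediate.

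\textbf{Main obstacle.} The only non-routine point is the concentration in Step 2: although the sum ranges over $\lfloor a\sqrt n\rfloor$ sites, whose cardinality itself diverges with $n$, its fluctuations must be negligible on the scale $\sqrt n$ by which we divide. This is precisely where the equilibrium coupling of Step 1 is essential, since under the invariant measure $\nu_\rho$ the variance of the partial sum is exactly $\chi(\rho) k_n = O(\sqrt n)$, producing fluctuations of order $n^{1/4}$. Away from equilibrium such an estimate is not available directly, so the reduction to $\nu_\rho$ is an indispensable preliminary step rather than a convenience.
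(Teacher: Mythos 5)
Your argument is correct and is essentially the route the paper has in mind: the proof is omitted there precisely because, as in \cite{g,jl}, it reduces via \eqref{relation tp cur dp}, the one-discrepancy coupling with the equilibrium dynamics, and the law of large numbers for the occupation sum (variance $\chi(\rho)k_n=O(\sqrt n)$ under $\nu_\rho$) to Theorem \ref{current clt}, exactly as you do. One small correction: the negative-side relation should read $\{X^n_u(t)\geq -k\}=\big\{J^n_u(t)\geq -\sum_{x=\lfloor un\rfloor-k}^{\lfloor un\rfloor-1}\eta_{tn^2}(x)\big\}$; the version you state with ``$\leq$'' fails as an exact identity of events (test it on a configuration containing only the tagged particle), though the discrepancy is an off-by-one in the integer threshold and therefore does not affect the limiting distribution.
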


We do not present the proof of this theorem since it is very similar to the one presented in \cite{g,jl}. We only remark that in this case the mean of the current and the tagged particle is zero since the dynamics is symmetric. For tightness issues we refer the reader to \cite{ps}, in which the case $\beta=0$  and $\alpha=1$ was considered.

We observe that in the case $\beta\in{(1,{+\infty}]}$, the tagged particle  starting at the origin moves microscopically but we do not see its fluctuations macroscopically, since the variance of $X_0(t)$ equals zero.

Here we describe the paper structure: Propositions \ref{prop23} and \ref{conver_pde} are proved in Section
\ref{s4}. We remark that Proposition \ref{conver_pde} is an extra
result, which is not applied along the text. Proposition \ref{prop23}
is invoked in Sections \ref{s5} and \ref{s6}. Propositions \ref{pp1} and Theorem \ref{flu1}  are proved in Section \ref{s3}
and Section \ref{s5} in the following way. Existence of the
Ornstein-Uhlenbeck process and convergence of the density fluctuation field along subsequences is proved in  Section \ref{s3}, while the uniqueness of the Ornstein-Uhlenbeck
process is reserved to Section \ref{s5}. Theorem \ref{current clt}, Corollary \ref{limit robin current}
and Theorem \ref{tagged clt} are proved in Section \ref{s6}. Finally, in Section \ref{s7}, we present $L^2$-estimates that are
invoked in Section \ref{s3} and Section \ref{s6}.

\section{Central Limit Theorem for the density of particles}\label{s3}

In this section we prove Theorem \ref{flu1}.
As usual in convergence of stochastic process, there are two facts
to be shown: convergence of finite-dimensional distributions of
$\mathcal{Y}_{t}^{n}$ to those of $\mathcal{Y}_t$ and tightness of the sequence $\{\mathcal{Y}_{t}^{n}\}_{n\in{\mathbb{N}}}$. We
start by the former.

\subsection{Characterization of limit points}

In this section we want to prove that the limit points of the sequence $\{\mathcal{Y}_t^n\}_{n\in{\mathbb{N}}}$ satisfy
Proposition \ref{pp1}. We start by showing that any limit point of the sequence $\{\mathcal{Y}_t^n\}_{n\in{\mathbb{N}}}$ solves \eqref{lf1}.

\subsubsection{Martingale problem}

By Dynkin's formula, for a given function $H\in{\mc {S}_\beta(\bb R)}$,
\begin{equation*}\label{martingal}
\mc M^n_t(H)=\mc Y^n_{t}(H)- \mc Y^n_0(H)-\int_{0}^{t}n^{2}\mc L_{n}\,\mc Y^n_s(H)\,ds
\end{equation*}
is a martingale with respect to the natural filtration $\mathcal{G}_{t}^n=\sigma(\eta_{sn^2},s\leq{t})$.
Doing simple computations we get
\begin{equation}\label{martingale decomposition}
\mc M^n_t(H)=\mc Y^n_{t}(H)- \mc Y^n_0(H)- \mc I_{t}^n(H),
\end{equation}
where
\begin{equation}\label{I}
 \mc I_{t}^{n}(H)=\int_{0}^{t}\frac{1}{\sqrt{n}}\sum_{x\in{\bb
Z}}n^2\mathbb{L}_{n}H\big(\pfrac{x}{n}\big)\eta_{sn^2}(x)\,ds\,
\end{equation}
and
$\mathbb{L}_n$ is the generator of the random walk on $ \bb Z$ given on $H:\bb Z \rightarrow \mathbb{R}$ and $x \in \bb Z$ by
\begin{equation*}
(\mathbb{L}_n H)(\pfrac{x}{n}) =  \xi^n_{x,x+1} \, \big[ H(\pfrac{x+1}{n})
- H(\pfrac{x}{n}) \big] + \xi^n_{x-1,x} \, \big[H(\pfrac{x-1}{n}) - H(\pfrac{x}{n}) \big] .
\end{equation*}
Note that, despite we do not index, the operator $\mathbb{L}_n$ depends on $\beta$.

We take in particular
$H\in{\mc{S}_\beta(\bb R)}$.
By the fact that the sum $\sum_{x\in{\bb Z}}n^2\mathbb{L}_{n}H(\pfrac{x}{n})$ is null
 and by adding and subtracting $\int_{0}^{t}\mc Y^n_s(\A H)\,ds$ to $\mc I_{t}^{n}(H)$, we can rewrite the martingale
$ \mc M^n_t(H)$ as
\begin{equation*}
 \mc M^n_t(H)=\mc Y^n_{t}(H)- \mc Y^n_0(H)-\int_{0}^{t}\mc Y^n_s(\A  H)\,ds-R_t^{n,\beta}(H),
\end{equation*}
where
\begin{equation*}
R_t^{n,\beta}(H):=\int_{0}^{t}\frac{1}{\sqrt{n}}\sum_{x\in\bb Z}\Big\{ n^2\mathbb{L}_{n}H\big(\pfrac{x}{n}\big)-(\A
H)(\pfrac{x}{n})
\Big\}\bar{\eta}_{sn^2}(x)\,ds\,
\end{equation*}
and for each $x\in{\mathbb{Z}}$, the centered random variable $\bar{\eta}_{sn^2}(x)$ denotes  $\eta_{sn^2}(x)-\rho$.

In some
points ahead we will write $\pfrac{0}{n}$ as zero to emphasize the discretization of space and make easier to follow the computations.

We start by showing that $R_t^{n,\beta}(H)$ is negligible in $L^2(\bb P_\rho^\beta)$, for all $H\in{\mc {S}_\beta(\bb R)}$.
 \begin{proposition}
For every $t\in{[0,T]}$, $\beta\in[0,{+\infty}]$ and $H\in{\mc {S}_\beta(\bb R)}$,
\begin{equation*}
\lim_{n\rightarrow{+\infty}}\mathbb{E}_\rho^\beta\Big[\big( R_t^{n,\beta}(H)\big)^2\Big]=0.
\end{equation*}
\end{proposition}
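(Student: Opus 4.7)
The coefficient $f_n(x):=n^2\mathbb{L}_n H(\pfrac{x}{n})-\Delta_\beta H(\pfrac{x}{n})$ is small away from the slow bond, so the plan is to split
$$R_t^{n,\beta}(H)\;=\;\int_0^t\frac{1}{\sqrt n}\sum_{x\notin\{-1,0\}}f_n(x)\,\bar\eta_{sn^2}(x)\,ds \;+\; \int_0^t\frac{1}{\sqrt n}\sum_{x\in\{-1,0\}}f_n(x)\,\bar\eta_{sn^2}(x)\,ds$$
into a \emph{bulk} part and a \emph{boundary} part and handle them separately.

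For the bulk part, both bonds adjacent to such $x$ have conductance $1$, so $n^2\mathbb{L}_n H(\pfrac{x}{n})$ is the standard discrete Laplacian. Since every $H\in\mc S(\bb R\backslash\{0\})$ satisfies $H^{(k)}(0^-)=H^{(k)}(0^+)$ for all $k\geq 1$, Taylor expansion on the appropriate half-line is consistent and gives $f_n(x)=O(n^{-2})$ with Schwartz-type decay in $\pfrac{x}{n}$. Cauchy--Schwarz in the $ds$-integral together with the independence of $\{\bar\eta_{sn^2}(x)\}_{x\in\bb Z}$ under $\nu_\rho$ then yields $\bb E_\rho^\beta[(\text{bulk part})^2]\leq t^2\chi(\rho)\cdot n^{-1}\sum_x f_n(x)^2 = O(n^{-4})$.

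For the boundary part, Taylor expansion at $x\in\{-1,0\}$ produces potentially large terms of order $n^{2-\beta}(H(0^+)-H(0^-))$ and $n\,H^{(1)}(0^\pm)$. At $\beta=1$ they are killed individually by the defining relation $H^{(1)}(0^\pm)=\alpha(H(0^+)-H(0^-))$, and for $\beta\geq 2$ they are harmless. In the remaining ranges $\beta\in[0,1)\cup(1,2)$, however, $f_n(-1)$ and $f_n(0)$ may individually diverge with $n$, so a naive $L^2$-bound fails. The key observation---verified by a direct Taylor computation---is that in every regime one has $f_n(-1)+f_n(0)=O(n^{-1})$, since the potentially large contributions at the two endpoints of the single slow bond appear with opposite signs in $f_n(\pm 1)$. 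Writing
$$f_n(-1)\bar\eta_{sn^2}(-1)+f_n(0)\bar\eta_{sn^2}(0)\;=\;f_n(-1)\big[\bar\eta_{sn^2}(-1)-\bar\eta_{sn^2}(0)\big]+\big[f_n(-1)+f_n(0)\big]\bar\eta_{sn^2}(0),$$
the last summand contributes $O(n^{-3/2})$ in $L^2$-norm after integration, while the first becomes
$$\frac{f_n(-1)}{\sqrt n}\int_0^t\big[\eta_{sn^2}(-1)-\eta_{sn^2}(0)\big]\,ds.$$

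Controlling this last expression in $L^2$ is the main obstacle and is precisely the sharp \emph{local replacement} advertised in the introduction: one bounds the $H^{-1}(\nu_\rho)$-norm of $\eta(-1)-\eta(0)$ in terms of the Dirichlet form of the slow-bond dynamics---whose rate at $\{-1,0\}$ is only $\alpha n^{-\beta}$---and then passes to the time-integral via a Kipnis--Varadhan-type inequality. The requisite $L^2$-estimates are deferred to Section~\ref{s7}, and together with the preceding bounds they imply $\bb E_\rho^\beta[(R_t^{n,\beta}(H))^2]\to 0$.
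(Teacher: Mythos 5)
Your argument is correct, and it reaches essentially the same two reductions as the paper, but by a genuinely different route for the slow-bond term. The paper treats the three regimes separately: for $\beta=1$ it cancels the divergent coefficients pointwise via the Robin condition (exactly as you do), while for $\beta\in[0,1)$ and $\beta\in(1,+\infty]$ it reduces to showing that $\mathbb{E}_\rho^\beta\big[\big(c_n\int_0^t\{\bar\eta_{sn^2}(-1)-\bar\eta_{sn^2}(0)\}ds\big)^2\big]$ vanishes (with $c_n=O(\sqrt n)$, resp.\ $O(n^{3/2-\beta})$), and proves this by the Local Replacement of Lemma \ref{2orderBG}: each of $\bar\eta(-1)$, $\bar\eta(0)$ is compared with a block average over a box of size $\ell=\varepsilon n$, the two block averages are compared by a crude variance bound, and one sends $n\to+\infty$ and then $\varepsilon\to 0$. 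You instead unify the regimes through the cancellation $f_n(-1)+f_n(0)=O(1/n)$, which indeed holds in every regime because the slow-bond contributions cancel exactly and $H^{(k)}(0^-)=H^{(k)}(0^+)$ for $k\geq1$ kills the remaining divergences, and you then control the single surviving term by an $H_{-1}$/Kipnis--Varadhan bound using only the slow bond's Dirichlet form; this is legitimate and is even available with the paper's own tools (Lemma \ref{lema_novo} with $F_n$ supported on $\{-1,0\}$ plus Proposition A1.6.1 of \cite{kl}), yielding $\mathbb{E}_\rho^\beta\big[\big(\int_0^t\{\bar\eta_{sn^2}(-1)-\bar\eta_{sn^2}(0)\}ds\big)^2\big]\leq C\,t\,\chi(\rho)\,n^{\beta-2}/\alpha$. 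Since $|f_n(-1)|=O(n)$ for $\beta\in[0,1)$ and $O(n^{2-\beta})$ for $\beta\in(1,2)$, your boundary term is then $O(n^{\beta-1})$, resp.\ $O(n^{1-\beta})$, hence vanishes; for $\beta\geq 2$ (including $\beta=+\infty$) the coefficients are $O(1)$ and you rightly avoid the dynamic estimate, which is necessary because the single-bond $H_{-1}$ bound degenerates as $\beta$ grows. Your route thus dispenses with block averages and the $\varepsilon\to 0$ limit and gives explicit rates, at the price of the exact Taylor cancellation and of the final rate verification in the two problematic ranges, which you leave implicit (also, a small slip: the two boundary sites are $-1$ and $0$, not $\pm 1$).
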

\begin{proof}
 Separating the sites close to the slow bond, we can rewrite
\begin{equation}
\begin{split}\label{eq3.4}
 R_t^{n,\beta}(H)
=&\int_{0}^{t}\frac{1}{\sqrt{n}}\sum_{x\neq -1, 0}\Big\{ n^2\mathbb{L}_{n}H\big(\pfrac{x}{n}\big)
-(\A  H)(\pfrac{x}{n})\Big\}\,\bar{\eta}_{sn^2}(x)\,ds\\
+&\int_{0}^{t}\frac{1}{\sqrt{n}}\Big\{ n^2\mathbb{L}_{n}H\big(\pfrac{-1}{n}\big)
-(\A  H)\big(\pfrac{-1}{n}\big)\Big\}\,\bar{\eta}_{sn^2}(-1)\,ds\\
+&\int_{0}^{t}\frac{1}{\sqrt{n}}\Big\{n^2\mathbb{L}_{n}H\big(\pfrac{0}{n}\big)
- (\A  H)\big(\pfrac{0}{n}\big)\Big\}\,\bar{\eta}_{sn^2}(0)\,ds.\\
\end{split}
\end{equation}
The operator $\A$ distinguishes of the usual laplacian operator essentially in the domain. Outside of the macroscopic point
$0$, for any $\beta$, the operator $\A$ behaves as the usual laplacian. Besides that, for $x\neq -1,0$,  the  term
$n^2\bb L_n(x)$ is exactly the discrete laplacian.
Hence, by the classical approximation of the continuous laplacian by the discrete laplacian, the first integral in \eqref{eq3.4}
is $O(1/\sqrt{n})$.

Since $\A  H$ is bounded, in order to show that
the sum of the second and third integrals in \eqref{eq3.4} goes to zero, it is enough to show that
\begin{equation*}
\begin{split}
r_t^{n,\beta}
:=&\int_{0}^{t}\frac{1}{\sqrt{n}}\Big\{ n^2\mathbb{L}_{n}H\big(\pfrac{-1}{n}\big)
\Big\}\,\bar{\eta}_{sn^2}(-1)\,ds+\int_{0}^{t}\frac{1}{\sqrt{n}}\Big\{n^2\mathbb{L}_{n}H\big(\pfrac{0}{n}\big)
\Big\}\,\bar{\eta}_{sn^2}(0)\,ds\\
\end{split}
\end{equation*}
goes to zero, as  $n\to{+\infty}$. Recalling the definition of $\bb L_n$ we arrive at
\begin{equation}\label{eq3.5}
\begin{split}
 r_t^{n,\beta}&=\int_{0}^{t}\!\frac{1}{\sqrt{n}}\Big\{ \alpha n^{2-\beta}\big[H\big(\pfrac{0}{n}\big)-H\big(\pfrac{-1}{n}\big)\big]
-n^2\big[H\big(\pfrac{-1}{n}\big)-H\big(\pfrac{-2}{n}\big)\big]\Big\}\,\bar{\eta}_{sn^2}(-1)\,ds\\
&+\int_{0}^{t}\!\frac{1}{\sqrt{n}}\Big\{n^2\big[H\big(\pfrac{1}{n}\big)-H\big(\pfrac{0}{n}\big)\big]
-\alpha n^{2-\beta}\big[H\big(\pfrac{0}{n}\big)-H\big(\pfrac{-1}{n}\big)\big]\Big\}\bar{\eta}_{sn^2}(0)\,ds.
\end{split}
\end{equation}
For each regime of $\beta$, namely, $\beta\in [0,1)$, $\beta=1$ and $\beta\in(1,{+\infty}]$, we present a specific argument to show
that $r_t^{n,\beta}$ vanishes in $L^2(\bb P_\rho^\beta)$,
as $n\to{+\infty}$.
Let us begin with the

\quad

\noindent $\bullet$ Case $\beta\in[0,1)$:

\smallskip

Recall that in this case $\mc S_\beta(\bb R)=\mc S(\bb R)$ and thus $H$ is smooth. Let
\begin{equation*}
(\Delta_n H)\big(\pfrac{x}{n}\big)=
n^2\big[H\big(\pfrac{x+1}{n}\big)+H\big(\pfrac{x-1}{n}\big)-2 H\big(\pfrac{x}{n}\big)\big]
\end{equation*}
be the discrete laplacian. Summing and subtracting suitable increments of $H$ in \eqref{eq3.5},
$ r_t^{n,\beta}$ can be rewritten as
\begin{equation*}\label{eq3.6}
\begin{split}
&\int_{0}^{t}\!\!\!\frac{1}{\sqrt{n}}\Big\{\! \alpha n^{2-\beta}\big[H(\pfrac{0}{n})-H(\pfrac{-1}{n})\big]
-n^2\big[H(\pfrac{-1}{n})-H(\pfrac{-2}{n})\big]-(\Delta_n H)(\pfrac{-1}{n})\!\Big\}\bar{\eta}_{sn^2}(-1)\,ds\\
\!+&\!\int_{0}^{t}\!\!\!\frac{1}{\sqrt{n}}\Big\{\!n^2\big[H(\pfrac{1}{n})-H(\pfrac{0}{n})\big]
-\alpha n^{2-\beta}\big[H(\pfrac{0}{n})-H(\pfrac{-1}{n})\big]-(\Delta_n
H)(\pfrac{0}{n})\!\Big\}\bar{\eta}_{sn^2}(0)\,ds,
\end{split}
\end{equation*}
plus a negligible term in $L^2(\bb P_\rho^\beta)$,
since $H$ is smooth and therefore $\Delta_n H$ is bounded. Then, we have that
\begin{equation*}
 r_t^{n,\beta}=\int_{0}^{t}\frac{1}{\sqrt{n}} (\alpha n^{2-\beta}-n^2)\big[H\big(\pfrac{0}{n}\big)-H\big(\pfrac{-1}{n}\big)\big]
\big(\bar{\eta}_{sn^2}(-1)-\bar{\eta}_{sn^2}(0)\big)\,ds.
\end{equation*}
Since $n\big[H\big(\pfrac{0}{n}\big)-H\big(\pfrac{-1}{n}\big)\big]$ is bounded, in order to show that  $r_t^{n,\beta}$ goes
to zero in $L^2(\mathbb{P}_\rho^\beta)$ as $n\rightarrow{+\infty}$, it is enough to show that
\begin{equation}\label{eq1}
\lim_{n\rightarrow{+\infty}}\mathbb{E}_\rho^\beta\Big[\Big(\int_{0}^{t}\sqrt{n}
\big\{\bar \eta_{sn^2}(-1)-\bar \eta_{sn^2}(0)\big\}\,ds\,\Big)^2\Big]=0.
\end{equation}
For that purpose we will make use of a comparison with empirical averages on boxes of a suitable size. Let
\begin{equation}\label{empirical average}
 \bar \eta^{\ell}(x)\;=\;\frac{1}{\ell}\sum_{y=x}^{x+\ell-1}\bar \eta(y),
\end{equation}
denote the centered empirical average of particles in a box of size $\ell$. Summing and subtracting the empirical mean at the sites $-1$
and $0$,
and applying the elementary inequality $(a+b+c)^2\leq 4(a^2+b^2+c^2)$, we bound the
 expectation in \eqref{eq1} from above by
\begin{equation*}\label{eq3.8}
\begin{split}
&4\,\mathbb{E}_\rho^\beta\Big[\Big(\int_{0}^{t}\sqrt{n}
\big\{\bar \eta_{sn^2}(-1)-\bar \eta_{sn^2}^\ell(-1)\big\}\,ds\,\Big)^2\,\Big]\\
+&4\,\mathbb{E}_\rho^\beta\Big[\Big(\int_{0}^{t}\sqrt{n}
\big\{\bar \eta_{sn^2}^{\ell}(-1)-\bar \eta_{sn^2}^\ell(0)\big\}\,ds\,\Big)^2\,\Big]\\
+&4\,\mathbb{E}_\rho^\beta\Big[\Big(\int_{0}^{t}\sqrt{n}
\big\{\bar \eta_{sn^2}^{\ell}(0)-\bar \eta_{sn^2}(0)\big\}\,ds\,\Big)^2\,\Big].
\end{split}
\end{equation*}
In order to estimate the first  expectation we use Lemma \ref{2orderBG}
 which guarantees that it is bounded from above by $C t (\alpha n^{\beta-1}+\ell/n)$, where $C$ is a constant. By Remark \ref{useful remark} the third expectation is bounded from above by $C' t \ell/n$, where $C'$ is a constant.
 On the other hand, a simple computation shows that the remaining expectation
  is bounded from above by $\tilde C t^2n/\ell^2$, where $\tilde C$ is a constant. Putting together the previous computations, we have that
 \begin{equation}\label{estimate}
\mathbb{E}_\rho^\beta\Big[\Big(\int_{0}^{t}\sqrt{n}
\big\{\bar \eta_{sn^2}(-1)-\bar \eta_{sn^2}(0)\big\}\,ds\,\Big)^2\Big]\leq{4C \Big(t\alpha n^{\beta-1}+\frac{t\ell}{n}\Big)+
4\tilde C\frac{t^2n}{\ell^2}+4C' \frac{t\ell}{n}}.
\end{equation}
Choose $\ell:=\varepsilon n$.
   Therefore, letting $n\to+\infty$ and then $\varepsilon\to 0$, the claim \eqref{eq1} follows.

\quad

\noindent $\bullet$ Case $\beta=1$:

\smallskip

In this case, by the definition of $\mc S_\beta(\bb R)$, we have that $\alpha \Big(H(0^+)-H(0^-)\Big)=H^{(1)}(0^+)=H^{(1)}(0^-)$.
Since  $H$ is continuous from the right at $x=0$, we have that
\begin{equation}\label{eq231}
 H\big(\pfrac{0}{n}\big)-H\big(\pfrac{-1}{n}\big)= \big[H(0^+)-H(0^-)\big] +O(1/n)\,,
\end{equation}
and
\begin{equation}\label{eq232}
 n\big[H\big(\pfrac{-1}{n}\big)-H\big(\pfrac{-2}{n}\big)\big]= H^{(1)}(0^-) +O(1/n)\,.
\end{equation}

We claim that the first integral in \eqref{eq3.5} is of order $O(t/\sqrt{n})$.  Since $\bar{\eta}_{sn^2}(-1)$ is bounded by one, the modulus of the first integral in \eqref{eq3.5} is bounded by
\begin{equation*}
\begin{split}
&\int_0^t\Big|\frac{1}{\sqrt{n}}\Big\{\alpha n^{2-\beta}[H(\pfrac{0}{n})-H(\pfrac{-1}{n})]-n^2[H(\pfrac{-1}{n})-H(\pfrac{-2}{n})]\Big\}\Big|\,ds\\
=\; & \frac{t}{\sqrt{n}}\Big|\alpha n^{2-\beta}[H(\pfrac{0}{n})-H(\pfrac{-1}{n})]-n^2[H(\pfrac{-1}{n})-H(\pfrac{-2}{n})]\Big|
\end{split}
\end{equation*}
Replacing \eqref{eq231} and \eqref{eq232} in the expression above, we get
\begin{equation*}
\frac{t}{\sqrt{n}}\Big|\alpha n^{2-\beta}[H(0^+)-H(0^-)+O(1/n)]-n[H^{(1)}(0^-)+O(1/n)]\Big|\,.
\end{equation*}
Since $\beta=1$ and in this case
\begin{equation*}
\alpha[H(0^+)-H(0^-)]\;=\;H^{(1)}(0^-)\,,
\end{equation*}
it implies that the first integral  in \eqref{eq3.5} is bounded by
\begin{equation*}
\frac{t}{\sqrt{n}}\,|n\,O(1/n)|\;=\;O(t/\sqrt{n})\,.
\end{equation*}
The same holds for the second integral in \eqref{eq3.5}.

Hence, when $\beta=1$,
 the expression $r_t^{n,\beta}$ is $O(t/n)$, which vanishes as $n\rightarrow{+\infty}$.

\quad

\noindent $\bullet$ Case $\beta\in{(1,+\infty]}$:

\smallskip

By definition of $\mc S_\beta(\bb R)$, since $H^{(1)}(0^+)=H^{(1)}(0^-)=0$, then we can rewrite
\begin{equation*}
 r_t^{n,\beta}=\int_{0}^{t}\alpha n^{3/2-\beta}
\big[H\big(\pfrac{0}{n}\big)-H\big(\pfrac{-1}{n}\big)\big]
\big(\bar{\eta}_{sn^2}(-1)-\bar{\eta}_{sn^2}(0)\big)\,ds+O(t/\sqrt{n}).
\end{equation*}
 Since for this range of the parameter $\beta$, $H$ is not smooth at the point $0$, in order to prove the claim it is enough to
show that:

\begin{equation*}\label{eq3.11}
\lim_{n\rightarrow{+\infty}}\mathbb{E}_\rho^\beta\Big[\Big(\int_{0}^{t}n^{3/2-\beta}
\big\{\bar \eta_{sn^2}(-1)-\bar \eta_{sn^2}(0)\big\}\,ds\,\Big)^2\Big]=0.
\end{equation*}

 By Lemma \ref{2orderBG} and by summing and subtracting $\eta_{sn^2}^\ell(-1)$ and $\eta_{sn^2}^\ell(0)$ as done above in the case $\beta\in{[0,1)}$, we can bound the previous expectation by $C
(t\alpha n^{1-\beta}+t\ell n^{1-2\beta}+t^2n^{3-2\beta}/\ell^2)$, where $C$ is a constant. Choose $\ell:=\varepsilon n$. Therefore, letting $n\to+\infty$ and then
$\varepsilon\to 0$, the claim follows.

\end{proof}

Now, recall from \eqref{martingale decomposition} that $\mc M^n_t(H)$
is a martingale.
 In the following section we prove that the sequence $\{\mc Y_t^n; t \in [0,T]\}_{n \in \bb N}$ is tight. Moreover, we prove that
the sequences $\{\mc I_t^n; t \in [0,T]\}_{n \in \bb N}$ and $\{\mc{M}_t^n; t \in [0,T]\}_{n \in \bb N}$ are tight. Assuming last
results, let $\{k_n\}_{n\in \bb N}$ be a subsequence such that all the sequences $\{\mc Y_t^{k_n}; t \in [0,T]\}_{n \in \bb N}$,
$\{\mc I_t^{k_n}; t \in [0,T]\}_{n \in \bb N}$ and $\{ \mc M_t^{k_n}; t \in [0,T]\}_{n \in \bb N}$ converge. Let $\{\mc Y_t; t \in
[0,T]\}$, $\{\mc I_t; t \in [0,T]\}$ and $\{\mc M_t; t \in [0,T]\}$ denote the limit of those sequences, respectively.

 We want to prove that
$\{\mc Y_t; t \in [0,T]\}$ is in $\mc C([0,T],\mathcal{S}'_{\beta}(\bb R))$ and also that for $H\in{\mc
S_\beta(\bb R)}$:
\begin{equation*}
\mc M_t(H)= \mc Y_t(H) -\mc Y_0(H) -  \int_0^t \mc Y_s(\A H)\,ds
\end{equation*}
is a martingale with quadratic variation given by $2\chi(\rho)t\|\B H\|_{2,\beta}^2$.
Fix $H\in{\mc S_\beta(\bb R)}$. Since we have that for each $n\in{\bb N}$, $\mc M_t^{k_n}(H)$
is a martingale, we want to show that passing to the limit in $n$ we obtain that $\mc M_t(H)$ is a martingale. We notice that the limit in distribution of a uniformly integrable sequence of martingales is a martingale, see Proposition 4.6 of \cite{gj}. Therefore, it is enough to show that
$\{\mc M_t^{k_n}(H)\}_{n \in \bb N}$ is uniformly integrable. To this end we notice that by Lemma \ref{martingaleL2bounds} we have
that
\begin{equation}\label{eq234}
\lim_{n\to{+\infty}}\mathbb{E}_\rho^\beta[(\mc M_t^{k_n}(H))^2]=
2\chi(\rho)\,t \|\B H\|_{2,\beta}^2,
\end{equation}
which is enough to assure the uniform integrability.

We claim that  the quadratic variation of the martingale $\{\mc
M_t(H)\,;\,0\leq t\leq T\}$ is given by $\{2\chi(\rho)t\|\B
H\|_{2,\beta}^2\,,\,0\leq t\leq T\}$. To this end,  observe that
\begin{equation}\label{eq251}
\{(\mc M_t^{k_n}(H))^2-\<\mc M^{k_n}(H)\>_t\,;\,0\leq t\leq T\}
\end{equation}
 is a martingale for each $n\in{\bb N}$. Applying \eqref{eq234}, the
limit in distribution of \eqref{eq251} as $n\to+\infty$ is
 \begin{equation*}
\{ (\mc M_t(H))^2-2\chi(\rho)t\|\B H\|_{2,\beta}^2\,;\,0\leq t\leq T\}.
\end{equation*}
Thus, it suffices to show that expression above is a martingale.
Again, let us use the fact that the limit in distribution of an
uniformly integrable sequence of martingales is a martingale, now for
the sequence \eqref{eq251}.

By \eqref{eq234},  the sequence $\<\mc M^{k_n}(H)\>_t$ is uniformly
integrable. Hence,  we only have to prove that $(\mc{M}_t^{k_n}(H))^2$
is uniformly integrable. For that purpose we
prove that $\mathbb{E}_\rho^\beta[(\mc M_t^{k_n}(H))^4]$ is bounded by a
constant that does not depend on $n$.
Now, we can employ, for example, Lemma 3 of \cite{dg} which says that
there exists a constant $C$ such that
\begin{equation*}
\mathbb{E}_\rho^\beta[(\mc
M_t^{k_n}(H))^4]\leq{C\Big(\mathbb{E}_\rho^\beta[(\mc
M_t^{k_n}(H))^2]+\mathbb{E}_\rho^\beta\Big[
\sup_{0\le t\le T}
\Big| \mc M_t^{k_n}(H) - \mc M_{t^-}^{k_n}(H)\Big|^4 \Big]\Big)}.
\end{equation*}
By Lemma \ref{martingaleL2bounds} the first term on the left hand side
of the previous inequality is bounded. On the other hand,
since
$$\sup_{0\le t\le T}| \mc M_{t}^{k_n} (H) - \mc M_{t^-}^{k_n} (H)|=\sup_{0\le
t\le T}| \mc Y_{t}^{k_n} (H) - \mathcal
Y_{t^-}^{k_n}(H)|\leq{\frac{C(H)}{\sqrt {k_n}}}\,,$$ the second term on the
right hand side of the previous inequality is also bounded, this
finishes the proof.
\subsubsection{Convergence at initial time}

\begin{proposition} \label{convergence at time zero}

 $\mc Y^n_0$ converges in distribution to $\mc Y_0$,
where $\mc Y_0$
 is a gaussian field with mean zero and covariance given by \eqref{eq:covar1}.
\end{proposition}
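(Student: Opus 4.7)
\textbf{Proof plan for Proposition \ref{convergence at time zero}.} Since $\eta_0$ is distributed according to the Bernoulli product measure $\nu_\rho$, the variables $\{\bar\eta_0(x)\}_{x\in\bb Z}$, where $\bar\eta_0(x)=\eta_0(x)-\rho$, are i.i.d., centered, bounded by $1$, with variance $\chi(\rho)$. Fix $H\in\mc S_\beta(\bb R)$. Then
\begin{equation*}
\mc Y^n_0(H)\;=\;\frac{1}{\sqrt n}\sum_{x\in\bb Z}H\bigl(\tfrac{x}{n}\bigr)\,\bar\eta_0(x)
\end{equation*}
is a sum of independent centered random variables, each of magnitude at most $\|H\|_\infty/\sqrt n$, with total variance $\chi(\rho)\,n^{-1}\sum_{x\in\bb Z}H(x/n)^2$. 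Because $H\in\mc S(\bb R\setminus\{0\})$ has rapid decay and is smooth (hence Riemann integrable) off the origin, this Riemann sum converges to $\chi(\rho)\int_{\bb R}H(u)^2\,du$; the possible discontinuity of $H$ at zero contributes only $O(1/n)$, and in the $\beta=1$ case the additional term $H(0)^2$ appearing in $\|H\|_{2,\beta}^2$ is precisely such a single-site contribution and therefore does \emph{not} survive in the limit, in agreement with the stated covariance \eqref{eq:covar1}.

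With these ingredients, Lindeberg's condition is trivially satisfied (each summand is uniformly bounded by a vanishing quantity), so the classical CLT for triangular arrays of independent bounded random variables gives
\begin{equation*}
\mc Y^n_0(H)\;\xrightarrow[n\to+\infty]{\text{d}}\;N\bigl(0,\,\chi(\rho)\|H\|_2^2\bigr).
\end{equation*}
Applying the same argument to an arbitrary linear combination $\sum_i a_i H_i$ (valid by linearity of $H\mapsto\mc Y^n_0(H)$) and invoking the Cramér--Wold device yields joint convergence of $(\mc Y^n_0(H_1),\ldots,\mc Y^n_0(H_k))$ to a centered Gaussian vector whose covariance matrix is obtained by polarization from the variance. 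This establishes convergence of all finite-dimensional distributions of the random linear functional $\mc Y^n_0$ to those of a centered Gaussian field $\mc Y_0$ with covariance \eqref{eq:covar1}.

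The main obstacle is upgrading this finite-dimensional convergence to weak convergence of $\mc Y^n_0$ as an $\mc S'_\beta(\bb R)$-valued random variable. For this I would invoke the same Mitoma-type criterion \cite{Mit} used elsewhere in the paper for tightness on the Fréchet space $\mc S_\beta(\bb R)$: it suffices to verify tightness of the real-valued family $\{\mc Y^n_0(H)\}_n$ for each fixed $H$, which is immediate from the uniform variance bound derived above. Since the quadratic form $H\mapsto\chi(\rho)\|H\|_2^2$ is continuous with respect to the seminorms defining $\mc S_\beta(\bb R)$, the Bochner--Minlos theorem (or a direct characteristic-functional argument) guarantees that the limit $\mc Y_0$ is a well-defined Gaussian element of $\mc S'_\beta(\bb R)$, completing the proof.
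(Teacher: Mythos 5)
Your proposal is correct and follows essentially the same route as the paper: exploit the product structure of $\nu_\rho$, identify the variance of $\mc Y^n_0(H)$ as a Riemann sum converging to $\chi(\rho)\int_{\bb R}H^2(u)\,du$ (with the single site at the origin contributing only $O(1/n)$, so \eqref{eq:covar1} carries no atom even when $\beta=1$), and pass to joint distributions via the Cram\'er--Wold device; the paper simply Taylor-expands the log-characteristic functional directly instead of quoting the Lindeberg CLT, which amounts to the same computation. Your closing remarks on upgrading to convergence in $\mc S'_\beta(\bb R)$ via Mitoma/Bochner--Minlos go beyond what the paper records here (it stops at finite-dimensional convergence and deals with tightness of $\{\mc Y^n_0(H)\}_{n\in\bb N}$ separately in the tightness section), but they are consistent with the framework and harmless.
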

\begin{proof}
We first claim that, for every $H\in{\mathcal{S}_\beta(\mathbb R})$ and every $t>{0}$,
\begin{equation*}\label{eq23}
\lim_{n\rightarrow{{+\infty}}}\log\mathbb{E}_\rho^\beta\Big[\exp\{i\theta{\mathcal{Y}^n_{0}(H)}\}\Big]=-\frac{\theta^2}{2}
\chi(\rho)\int_{\mathbb{R}} H^2(u)du\,.
\end{equation*}
Since $\nu_\rho$ is a Bernoulli product measure,
\begin{equation*}
\begin{split}
\log\mathbb{E}_\rho^\beta[\exp\{i\theta\mathcal{Y}^n_{0}(H)\}]&=
\log\mathbb{E}_\rho^\beta\Big[\exp\Big\{\frac{i\theta}{\sqrt{n}}\sum
_{x	\in{\bb Z}}\; \bar{\eta}_0(x)H\Big(\frac{x}{n}\Big)\Big\}\Big]\\
&=\sum_{x\in{\bb Z}}\log\mathbb{E}_\rho^\beta\Big[\exp\Big\{\frac{i\theta}{\sqrt{n}}\;
\bar{\eta}_0(x)H\Big(\frac{x}{n}\Big)\Big\}\Big]\,.
\end{split}
\end{equation*}
Since $H$ is smooth except possibly at $x=0$, using Taylor's expansion the right hand side of last expression is equal to
\begin{equation*}
-\frac{\theta^2}{2n}\sum_{x\in{\bb Z }}H^{2}\Big(\frac{x}{n}\Big)\chi(\rho)+O(1/\sqrt n).
\end{equation*}
Taking the limit as $n\rightarrow{+\infty}$ and using the continuity of $H$, the proof of the claim ends. Replacing $H$ by a
linear combination of functions and recalling the Cr\'amer-Wold device, the proof finishes.
\end{proof}

\begin{remark}
We notice that the result stated above holds true for $\mathcal{Y}_t$ for any $t\in [0,T]$. In particular we conclude that the
gaussian white noise is a stationary solution of \eqref{OU}, for any $\beta\in[0,{+\infty}]$.
\end{remark}
\subsection{Tightness}

Here we prove tightness of the process $\{\mc Y_t^n; t \in [0,T]\}_{n \in \bb N}$. At first we notice that by the Mitoma's
criterion and Proposition \ref{frechet}, it is enough to prove tightness of the sequence of real-valued processes $\{\mc Y_t^n(H); t \in [0,T]\}_{n \in \bb N}$,
for $H\in{\mc {S}_\beta(\bb R)}$.

\begin{proposition}[Mitoma's criterion \cite{Mit}]
\quad

 A sequence $\{x_t;t \in [0,T]\}_{n \in \bb N}$ of processes in $\mc D([0,T],\mc {S}'_\beta(\bb R))$ is tight with respect to the
Skorohod topology if and only if the sequence $\{x_t(H);t \in [0,T]\}_{n \in \bb N}$ of real-valued processes is tight with
respect to the Skorohod topology of $\mc D([0,T], \bb R)$, for any $H \in \mc {S}_\beta(\bb R)$.
\end{proposition}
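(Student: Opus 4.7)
The plan is to prove both implications. Necessity is immediate: for any $H \in \mc S_\beta(\bb R)$, the evaluation map $\mc S'_\beta(\bb R) \to \bb R$, $y \mapsto y(H)$, is continuous by definition of the strong dual topology, and it lifts to a continuous map $\mc D([0,T], \mc S'_\beta(\bb R)) \to \mc D([0,T], \bb R)$. Since tightness is preserved under continuous pushforwards (the continuous mapping theorem for tightness), the real-valued family $\{x^n_\cdot(H)\}_n$ is tight for every $H \in \mc S_\beta(\bb R)$.

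The converse is the substantive direction. The plan is to apply Prohorov's theorem by constructing, for every $\epsilon > 0$, a compact subset $K_\epsilon \subset \mc D([0,T], \mc S'_\beta(\bb R))$ with $\bb P(x^n_\cdot \in K_\epsilon) \geq 1 - \epsilon$ uniformly in $n$, and splitting this requirement into compact containment at each fixed time plus a Skorohod modulus-of-continuity condition. The decisive structural input is that $\mc S_\beta(\bb R)$ is a \emph{nuclear} Fr\'echet space: its topology is defined by the countable family of seminorms $\|\cdot\|_{k,\ell}$ inherited from $\mc S(\bb R\setminus\{0\})$, which is nuclear by exactly the same tensor-product argument as the classical Schwartz space. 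In a nuclear Fr\'echet space, relatively compact subsets of the strong dual coincide with the equicontinuous bounded sets, equivalently with the sets uniformly controlled by a single one of the defining seminorms.

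Now fix a countable dense family $\{H_j\}_{j \geq 1} \subset \mc S_\beta(\bb R)$. From the assumed real-valued tightness, for each $\epsilon > 0$ and each $j$ one selects $M_j > 0$ such that
$$\sup_n \bb P\Bigl(\sup_{t \in [0,T]} |x^n_t(H_j)| > M_j\Bigr) \leq \epsilon\, 2^{-j}.$$
The main obstacle, and the step where nuclearity enters essentially, is to calibrate the thresholds $M_j$ so that the intersection event forces the trajectory $x^n_\cdot$ into a fixed equicontinuous subset of $\mc S'_\beta(\bb R)$: one invokes the Hilbert--Schmidt-type embedding between the Hilbertian completions associated to two successive seminorms $\|\cdot\|_{k,\ell}$ and $\|\cdot\|_{k',\ell'}$ to aggregate the countably many scalar bounds into a single seminorm bound, producing a genuine relatively compact set in $\mc S'_\beta(\bb R)$. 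The modulus-of-continuity component is handled by the same diagonal strategy applied to the Skorohod moduli $w'(x^n_\cdot(H_j), \delta)$; controlling each real-valued modulus with summable error and aggregating via nuclearity yields the required joint Skorohod-modulus control in $\mc S'_\beta(\bb R)$, closing the Aldous/Kurtz criterion for tightness in $\mc D([0,T], \mc S'_\beta(\bb R))$.
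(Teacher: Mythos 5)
A preliminary remark: the paper does not prove this proposition at all — it is quoted from Mitoma \cite{Mit} and used as a black box (its only role is that, together with Proposition \ref{frechet}, tightness of the distribution-valued fields reduces to tightness of the real-valued processes $\mc Y^n_t(H)$). So you are in effect attempting a proof of Mitoma's theorem itself. Your necessity direction is fine. The sufficiency direction follows the right general strategy — nuclearity, Hilbertian seminorms, Hilbert--Schmidt embeddings, which is indeed how Mitoma argues — but it contains a genuine gap at precisely the step you label ``the main obstacle.''

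The gap is this: with an arbitrary countable \emph{dense} family $\{H_j\}$ and thresholds $M_j$ supplied only by coordinatewise tightness, the event $\bigcap_j\{\sup_{t\le T}|x^n_t(H_j)|\le M_j\}$ does \emph{not} confine the path to an equicontinuous (hence relatively compact) subset of $\mc S'_\beta(\bb R)$: a linear functional may satisfy $|y(H_j)|\le M_j$ for every $j$ in a dense family and still fail every dual seminorm bound, since pointwise bounds on a dense set give no continuity estimate. To make the aggregation work one must take $\{H_j\}$ to be a complete orthonormal system of a Hilbertian completion $\mc S_q$ of $\mc S_\beta(\bb R)$, use that the embedding into a further completion is Hilbert--Schmidt with weights $\lambda_j$ satisfying $\sum_j\lambda_j<\infty$, and establish a quantitative bound of the type $\sup_n\bb P\big(\sum_j\lambda_j\sup_{t\le T}|x^n_t(H_j)|^2>M\big)\le\eps$; this needs the thresholds to be chosen summably against the Hilbert--Schmidt weights, which is where the real work lies and which mere tightness of each coordinate with unconstrained $M_j$ does not deliver. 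Two further ingredients are asserted but not available for free: (i) the paper's seminorms $\Vert\cdot\Vert_{k,\ell}$ are sup-norms, so the ``Hilbertian completions associated to successive seminorms'' must first be constructed (an equivalent countably Hilbertian nuclear structure on $\mc S(\bb R\backslash\{0\})$, plus the observation that the closed subspaces $\mc S_\beta(\bb R)$ inherit nuclearity); (ii) the Skorohod moduli $w'(x^n_\cdot(H_j),\delta)$ involve partitions of $[0,T]$ that vary with $j$, so they cannot simply be aggregated coordinatewise — one must either produce a common partition or reduce to tightness in $\mc D([0,T],\mc S'_{-p})$ for a single Hilbertian dual norm and invoke a Hilbert-space-valued criterion, and this is the second substantive part of Mitoma's proof. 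As written, your text is a correct plan rather than a proof; for the purposes of the paper the citation of \cite{Mit} is the intended justification.
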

Now, to show tightness of the real-valued process we use the following Aldous' criterion.
\begin{proposition}
 A sequence $\{x_t; t\in [0,T]\}_{n \in \bb N}$ of real-valued processes is tight with respect to the Skorohod topology of $\mc
D([0,T],\bb R)$ if:
\begin{itemize}
\item[i)]
$\displaystyle\lim_{A\rightarrow{+\infty}}\;\limsup_{n\rightarrow{+\infty}}\;\mathbb{P}_\rho^\beta\Big(\sup_{0\leq{t}\leq{T}}|x_{t
} |>A\Big)\;=\;0\,,$

\item[ii)] for any $\varepsilon >0\,,$
 $\displaystyle\lim_{\delta \to 0} \;\limsup_{n \to {+\infty}} \;\sup_{\lambda \leq \delta} \;\sup_{\tau \in \mc T_T}\;
\mathbb{P}_\rho^\beta(|
x_{\tau+\lambda}- x_{\tau}| >\varepsilon)\; =\;0\,,$
\end{itemize}
where $\mc T_T$ is the set of stopping times bounded by $T$.
\end{proposition}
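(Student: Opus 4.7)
This is the classical Aldous tightness criterion for $\mc D([0,T],\bb R)$; my plan is to follow the standard route via Billingsley's characterization of tightness in this space. That characterization reduces tightness to verifying (a) compact containment, i.e.\ that the values $x_t^n$ stay in a compact set of $\bb R$ with high probability uniformly in $n$ and $t$, and (b) a uniform control on the Skorohod modulus
\[
w'_\delta(x)\;=\;\inf_{\{t_i\}}\,\max_{1\le i\le r}\,\sup_{s,t\in[t_{i-1},t_i)}|x(s)-x(t)|\,,
\]
where the infimum runs over partitions $0=t_0<\cdots<t_r=T$ with $\min_i(t_i-t_{i-1})>\delta$. I would verify (a) directly from hypothesis (i) and then deduce (b) from hypothesis (ii).

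Compact containment is immediate: given $\eta>0$, choose $A$ large enough that $\limsup_n\mathbb{P}_\rho^\beta(\sup_t|x_t^n|>A)<\eta$; then with probability at least $1-\eta$ the whole path of $x^n$ is contained in the compact set $[-A,A]\subset\bb R$ for $n$ large, which is what is needed for compact containment in $\mc D([0,T],\bb R)$.

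For the modulus bound, fix $\varepsilon>0$ and define an increasing sequence of stopping times by $\tau_0=0$ and
\[
\tau_{k+1}\;=\;\inf\bigl\{t>\tau_k:|x_t^n-x_{\tau_k}^n|>\varepsilon\bigr\}\wedge T\,.
\]
By (a) the number $N$ of distinct $\tau_k$ strictly less than $T$ is, with high probability, bounded by $2A/\varepsilon$. The key observation is that if all consecutive gaps $\tau_{k+1}-\tau_k$ exceed $\delta$, then the partition induced by $\{\tau_k\}$ realizes $w'_\delta(x^n)\le 2\varepsilon$; contrapositively, $\{w'_\delta(x^n)>2\varepsilon\}\subseteq\bigcup_{k}\{\tau_{k+1}-\tau_k<\delta,\,\tau_k<T\}$. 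A union bound over the at-most $2A/\varepsilon$ relevant indices, combined with hypothesis (ii) applied at the stopping time $\tau=\tau_k$, yields $\lim_\delta\limsup_n\mathbb{P}_\rho^\beta(w'_\delta(x^n)>2\varepsilon)=0$ for every $\varepsilon>0$, which is condition (b).

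The main technical obstacle is precisely the last step: hypothesis (ii) controls $|x_{\tau+\lambda}-x_\tau|$ for each single $\lambda\le\delta$ at a stopping time $\tau$, but the event $\{\tau_{k+1}-\tau_k<\delta\}$ requires control of the supremum over $\lambda\in[0,\delta]$. Bridging the gap involves a dyadic discretization in $\lambda$ together with the càdlàg structure of the paths, exploiting that jumps are isolated; this is exactly where working with $\mc D$-tightness and the modulus $w'_\delta$ (rather than with the plain continuous modulus and $\mc C$-tightness) is essential, and it is the only non-mechanical step of the argument.
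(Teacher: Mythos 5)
You should first note that the paper does not prove this proposition at all: it is stated and used as the classical Aldous sufficiency criterion (see Billingsley, \emph{Convergence of Probability Measures}, Theorem 16.10, or Kipnis--Landim, Chapter 4), so your sketch has to be measured against the standard proof of that result. Your skeleton is the right one (Billingsley's characterization of tightness in $\mc D([0,T],\bb R)$ via compact containment and the modulus $w'_\delta$, compact containment from hypothesis (i), and the oscillation stopping times $\tau_k$), but two of your steps are genuinely defective.

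First, the bound ``$N\le 2A/\varepsilon$ with high probability'' cannot follow from (i): a path bounded in sup-norm by $A$ can oscillate between $0$ and $2\varepsilon$ arbitrarily many times, so sup-norm control gives no bound whatsoever on the number of $\varepsilon$-oscillation times (only a total-variation bound would). In the standard proof the number of $\tau_k$'s before $T$ is controlled by hypothesis (ii) itself: one first shows that a gap $\tau_{k+1}-\tau_k$ is rarely smaller than $\delta$, and then concludes that with high probability at most of order $T/\delta$ of the $\tau_k$ occur before $T$. Second, the step you yourself flag as the obstacle --- passing from the fixed-$\lambda$ estimate in (ii) to the event $\{\tau_{k+1}-\tau_k<\delta\}$, i.e.\ to a supremum over $\lambda\in[0,\delta]$ --- is the actual heart of the theorem, and ``dyadic discretization plus c\`adl\`ag paths'' does not deliver it: right-continuity provides no quantitative modulus, so the path may exceed $\varepsilon$ only between the dyadic lags. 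The classical resolution is Aldous' randomization trick: on $\{\tau_{k+1}\le T,\ \tau_{k+1}-\tau_k\le\delta\}$ one has $|x_{\tau_{k+1}}-x_{\tau_k}|\ge\varepsilon$, hence for every lag $u$ either $|x_{\tau_k+u}-x_{\tau_k}|\ge\varepsilon/2$ or $|x_{\tau_{k+1}+v}-x_{\tau_{k+1}}|\ge\varepsilon/2$ with $v=u-(\tau_{k+1}-\tau_k)$; averaging over $u\in[\delta,2\delta]$ and applying (ii) at the stopping times $\tau_k$ and $\tau_{k+1}$ via Fubini bounds the probability of a small gap. Without this (or an equivalent device), and with the erroneous count of the $\tau_k$'s, your proposal is not yet a proof.
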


Fix $H\in{\mc S_\beta(\bb R)}$. By \eqref{martingale decomposition}, it is enough to prove tightness of $\{\mc Y_0^n(H)\}_{n \in
\bb N}$, $\{ \mc I_t^n(H); t \in [0,T]\}_{n \in \bb N}$, and $\{\mc M_t^n(H); t \in [0,T]\}_{n \in \bb N}$. By Proposition \ref
{convergence at time zero} the sequence of initial fields $\{\mc Y_0^n(H)\}_{n \in
\bb N}$ is obviously tight.
For the martingale term, the first claim of the Aldous' criterion is straightforwardly verified as an application of Doob's inequality
together with \eqref{var_quad}. By Lemma \ref{integralL2bounds}, the first claim can be easily checked for the integral term. It
remains to check the second claim, which is more demanding. For that purpose,
fix a stopping time $\tau \in \mc T_T$. By Chebychev's inequality together with Lemma \ref{martingaleL2bounds} we have
that
\begin{equation*}
\begin{split}
\mathbb{P}_\rho^\beta\big(\big| \mc M_{\tau+\lambda}^n(H) - \mc M_\tau^n(H)\big| >\varepsilon\big)
	&\leq \frac{1}{\varepsilon^2} \mathbb{E}_\rho^\beta\big[ \big( \mc M_{\tau+\lambda}^n(H) - \mc M_\tau^n(H)\big)^2\big]\\
	&\leq \frac{1}{\varepsilon^2} 2\chi(\rho)\,\lambda \Vert \B H \Vert^2_{2,\beta}\\
	&\leq \frac{1}{\varepsilon^2} 2\chi(\rho)\,\delta \Vert  \B H\Vert^2_{2,\beta},
\end{split}
\end{equation*}
which vanishes as $\delta\rightarrow{0}$.
 In order to check the second claim for the integral term, we use the same argument as above together with Lemma
\ref{integralL2bounds} to have that
 \begin{equation*}
\begin{split}
\mathbb{P}_\rho^\beta\big(\big|  \mc I_{\tau+\lambda}^n(H) -  \mc I_\tau^n(H)\big| >\varepsilon\big)
	&\leq \frac{1}{\varepsilon^2} \mathbb{E}_\rho^\beta\big[ \big( \mc I_{\tau+\lambda}^n(H) - \mc I_\tau^n(H)\big)^2\big]\\
	&\leq \frac{80t}{\varepsilon^2}\delta\,\chi(\rho)\Vert \B H\Vert^2_{2,\beta},
\end{split}
\end{equation*}
which vanishes as $\delta\rightarrow{0}$.
This finishes the proof of tightness.

\section{Semigroup results}\label{s4}

Here we present the deduction of the explicit formula for the semigroup $T_t^\alpha$ associated to the following heat equation with a boundary condition of Robin's type
\begin{equation}\label{pdeApendice}
\left\{
\begin{array}{ll}
 \partial_t u(t,x) = \; \partial^2_{xx} u(t,x), &t \geq 0,\, x \in \mathbb R\backslash\{0\}\\
\p_x u(t,0^+)=\p_x u(t,0^-)= \alpha \{u(t,0^+)-u(t,0^-)\},  &t \geq 0\\
 u(0,x) = \; g(x), &x \in \mathbb R.
\end{array}
\right.
\end{equation}
Let $T_t$ be the semigroup associated to the heat equation \eqref{pde1}.
Let $\tilde{T}_t^\alpha$ be the semigroup related to the following partial differential equation on the half-line:
\begin{equation}\label{pde4}
\left\{
\begin{array}{ll}
\partial_t u(t,x) = \; \partial^2_{xx} u(t,x), &t \geq 0,\, x >0\\
\p_x u(t,0^+)= 2\alpha u(t,0^+),  &t \geq 0\\
u(0,x) = \; g(x), &x >0.
\end{array}
\right.
\end{equation}
A direct verification shows that
\begin{equation}\label{eq34}
 T_t^\alpha g(x)\;=\;
 \begin{cases}
  T_tg_{\textrm{even}}(x)+\tilde{T}_t^\alpha g_{\textrm{odd}}(x)\,,&\textrm{for } x>0\,,\\
T_tg_{\textrm{even}}(x)-\tilde{T}_t^\alpha g_{\textrm{odd}}(-x)\,,&\textrm{for } x<0\,,\\
  \end{cases}
\end{equation}
is solution of \eqref{pdeApendice}. Since the semigroup $T_t$ has the classical expression given in \eqref{sem heat eq}, we are therefore left to deduce an explicit  expression for $\tilde{T}_t^\alpha$. Denote by $u$ the solution of \eqref{pde4} and consider
$v=2\alpha u-\p_x u\,,$
which   is the solution of the following equation
\begin{equation*}
\left\{
\begin{array}{ll}
\partial_t v(t,x) = \; \partial^2_{xx} v(t,x), &t \geq 0,\, x >0\\
v(t,0^+)= 0,  &t \geq 0\\
v(0,x) = \; v_0(x), &x >0.
\end{array}
\right.
\end{equation*}
with $v_0(x)=2\alpha g(x)-\p_x g(x)$. Last equation is the heat
equation with a boundary condition of Dirichlet's type.  The semigroup $T^{\textrm{Dir}}_t v_0(x)$ associated to last equation, is classical and  is
given by
\begin{equation}\label{sem dir}
 T^{\textrm{Dir}}_tv_0(x)\;=\;\frac{1}{\sqrt{4\pi t}}\int_0^{{+\infty}}\Big[e^{-\frac{(x-y)^2}{4t}}-e^{-\frac{(x+y)^2}{4t}}\Big]v_0(y)\,dy\,.
\end{equation}

Then, we get to
\begin{equation*}
v(t,x)=\frac{1}{\sqrt{4\pi t}}\int_{0}^{+\infty} \Big[e^{-\frac{(x-y)^2}{4t}}-e^{-\frac{(x+y)^2}{4t}}\Big]\,\{2\alpha g(x)-\p_x
g(x)\}\,dy\,.
\end{equation*}
Solving the ordinary linear differential equation $v=2\alpha u-\p_x u$, we get
\begin{equation*}
 u(t,x)=e^{2\alpha x}\int_{x}^{+\infty} e^{-2\alpha z} \,v(t,z)\,dz\,.
\end{equation*}
From the last two formulas, we arrive at
\begin{equation*}
 \tilde{T}_t^\alpha g(x)=\frac{e^{2\alpha x}}{\sqrt{4\pi t}}\int_x^{+\infty} e^{-2\alpha z}\int_0^{+\infty}
\Big[e^{-\frac{(z-x)^2}{4t}}- e^{-\frac{(z+x)^2}{4t}}\Big] \Big(2\alpha g(y)-\p_y g(y)\Big)\,dy\,dz\,.
\end{equation*}
Finally, an  integration by parts on the term of the integral above involving $\p_y g$ yields
\begin{equation}\label{semi_alpha}
 \tilde{T}_t^\alpha g(x)=\frac{e^{2\alpha x}}{\sqrt{4\pi t}}\int_x^{+\infty} e^{-2\alpha z}\int_0^{+\infty}
\Big[(\pfrac{z-y+4\alpha t}{2t})e^{-\frac{(z-y)^2}{4t}}+(\pfrac{z+y-4\alpha t}{2t})e^{-\frac{(z+y)^2}{4t}}\Big]
g(y)\,dy\,dz\,.
\end{equation}
Putting this formula together with \eqref{eq34} and \eqref{sem heat eq} we get the statement of
Proposition \ref{prop23}.

In possess of the expression of all the semigroups, we can proceed to the
\begin{proof}[Proof of Proposition \ref{conver_pde}]
Recall \eqref{eq34}. We claim that
\begin{equation}\label{Neu}
 \lim_{\alpha\to 0} \tilde{T}^\alpha_tg_{{\textrm{odd}}}(x)=T^{\textrm{Neu}}_tg_{\textrm{odd}}(x)
\end{equation}
and
\begin{equation}\label{Dir}
 \lim_{\alpha\to {+\infty}} \tilde{T}^\alpha_tg_{{\textrm{odd}}}(x)=T^{\textrm{Dir}}_tg_{\textrm{odd}}(x)\,,
\end{equation}
where $T^{\textrm{Dir}}_t$ is given by \eqref{sem dir}.
We observe that proving \eqref{Neu} and \eqref{Dir} is enough to conclude the proof, since it is of easy verification that
\begin{equation*}
 T_tg(x)\;=\;
 \begin{cases}
  T_tg_{\textrm{even}}(x)+T_t^{\textrm{Dir}} g_{\textrm{odd}}(x)\,,&\textrm{for } x>0\,,\\
T_tg_{\textrm{even}}(x)-T_t^{\textrm{Dir}} g_{\textrm{odd}}(-x)\,,&\textrm{for } x<0\,,\\
  \end{cases}
\end{equation*}
and
\begin{equation*}
 T_t^{\textrm{Neu}}g(x)\;=\;
 \begin{cases}
  T_tg_{\textrm{even}}(x)+T_t^{\textrm{Neu}} g_{\textrm{odd}}(x)\,,&\textrm{for } x>0\,,\\
T_tg_{\textrm{even}}(x)-T_t^{\textrm{Neu}} g_{\textrm{odd}}(-x)\,,&\textrm{for } x<0\,.\\
  \end{cases}
\end{equation*}
 Since $g_{\textrm{odd}}$ will have no special role in the convergences \eqref{Neu} and \eqref{Dir}, we will write just
$g$ instead.  We start by showing \eqref{Neu}. First, we rewrite \eqref{semi_alpha} to get to
\begin{equation*}
\begin{split}
\tilde{T}_t^\alpha g(x)=&\frac{e^{2\alpha x}}{\sqrt{4\pi t}}\int_x^{+\infty} e^{-2\alpha z}\int_0^{+\infty}
\Big[(\pfrac{z-y}{2t})e^{-\frac{(z-y)^2}{4t}}+(\pfrac{z+y}{2t})e^{-\frac{(z+y)^2}{4t}}\Big]
g(y)\,dy\,dz\\
+&\frac{2\alpha e^{2\alpha x}}{\sqrt{4\pi t}}\int_x^{+\infty} e^{-2\alpha z}\int_0^{+\infty}
\Big[e^{-\frac{(z-y)^2}{4t}}-e^{-\frac{(z+y)^2}{4t}}\Big]
g(y)\,dy\,dz \,.
\end{split}
\end{equation*}
When $\alpha\to 0$, the second parcel on the right hand  side of previous equation vanishes. Thus, we are concerned only with
the first parcel. Its limit when $\alpha\to 0$ is
\begin{equation*}
 \frac{1}{\sqrt{4\pi t}}\int_0^{+\infty} \int_x^{+\infty}
\Big[(\pfrac{z-y}{2t})e^{-\frac{(z-y)^2}{4t}}+(\pfrac{z+y}{2t})e^{-\frac{(z+y)^2}{4t}}\Big]
g(y)\,dy\,dz\,.
\end{equation*}
Applying Fubini's Theorem to last expression above gives
\begin{equation*}
\frac{1}{\sqrt{4\pi t}} \int_0^{+\infty} g(y)\int_x^{+\infty}
\Big[(\pfrac{z-y}{2t})e^{-\frac{(z-y)^2}{4t}}+(\pfrac{z+y}{2t})e^{-\frac{(z+y)^2}{4t}}\Big]
\,dz\,dy\,.
\end{equation*}
Solving the integral in $z$, we get that last expression  equals to $T^{\textrm{Neu}}_tg(x)$, as claimed.\medskip

Now we prove \eqref{Dir}.  We begin by splitting \eqref{semi_alpha} as
\begin{equation}\label{eq38}
\begin{split}
\tilde{T}_t^\alpha g(x)=&2\alpha e^{2\alpha x}\int_x^{+\infty} e^{-2\alpha z}\frac{1}{2\alpha}\int_0^{+\infty}
\frac{1}{\sqrt{4\pi
t}}\Big[(\pfrac{z-y}{2t})e^{-\frac{(z-y)^2}{4t}}+(\pfrac{z+y}{2t})e^{-\frac{(z+y)^2}{4t}}\Big]
g(y)\,dy\,dz\\
 +&2\alpha e^{2\alpha x}\int_x^{+\infty} e^{-2\alpha z}\int_0^{+\infty} \frac{1}{\sqrt{4\pi t}}
\Big[e^{-\frac{(z-y)^2}{4t}}-e^{-\frac{(z+y)^2}{4t}}\Big]
g(y)\,dy\,dz \,.
\end{split}
\end{equation}
Since
\begin{equation*}
 \int_x^{+\infty} e^{-2\alpha z}\,dz \;=\;\frac{e^{-2\alpha x}}{2\alpha }\,,
\end{equation*}
we can see that the first parcel on right hand side of \eqref{eq38} is an average of the function
\begin{equation}\label{eq39}
\frac{1}{2\alpha}\int_0^{+\infty}
\frac{1}{\sqrt{4\pi
t}}\Big[(\pfrac{z-y}{2t})e^{-\frac{(z-y)^2}{4t}}+(\pfrac{z+y}{2t})e^{-\frac{(z+y)^2}{4t}}\Big]
g(y)\,dy
\end{equation}
over the finite measure ${\bf{1}}_{[x,{+\infty})}(z)e^{-2\alpha z}\,dz$. Since \eqref{eq39} goes to zero when
$\alpha\to{+\infty}$, we are only concerned with the second parcel in \eqref{eq38}. By Fubini's Theorem, it is equal to
\begin{equation*}
  \frac{ e^{2\alpha x}}{\sqrt{4\pi t}}\int_0^{+\infty} g(y)\int_x^{+\infty} 2\alpha e^{-2\alpha z}
\Big[e^{-\frac{(z-y)^2}{4t}}-e^{-\frac{(z+y)^2}{4t}}\Big]\,dz\,dy \,.
\end{equation*}
Performing an integration by parts to the integral in $z$ yields
\begin{equation*}
\begin{split}
 &\frac{ e^{2\alpha x}}{\sqrt{4\pi t}}\int_0^\infty g(y)\Bigg[ -e^{-2\alpha z}
\Big(e^{-\frac{(z-y)^2}{4t}}-e^{-\frac{(z+y)^2}{4t}}\Big)\Big\vert_{z=x}^{z={+\infty}}\\
 &+\int_x^{+\infty} e^{-2\alpha
z}\Big(-\pfrac{(y-z)}{2t}e^{-\frac{(z-y)^2}{4t}}-\pfrac{(y+z)}{2t}e^{-\frac{(z+y)^2}{4t}}\Big)\,dz\Bigg]dy\,, \\
\end{split}
\end{equation*}
which is equal to
\begin{equation*}
 T_t^{\textrm{Dir}}g(x)-
 \frac{ e^{2\alpha x}}{\sqrt{4\pi t}}\int_0^{+\infty} g(y)\Bigg[ \int_x^{+\infty} e^{-2\alpha
z}\Big(\pfrac{(y-z)}{2t}e^{-\frac{(z-y)^2}{4t}}+\pfrac{(y+z)}{2t}e^{-\frac{(z+y)^2}{4t}}\Big)\,dz\Bigg]dy \,.
\end{equation*}
Multiplying and dividing the integral term above by $2\alpha$, and then applying the same argument on the average previously used,
we get that the limit when $\alpha\to+\infty$  is given by $T_t^{\textrm{Dir}}g(x)$, finishing the proof of the pointwise
convergence.

In order to conclude the $L^p(\bb R)$ convergence, we notice that the semigroups are written in terms of the gaussian kernel, from which  is not difficult
to get a uniform bound in $\alpha$. Invoking the Dominated Convergence Theorem the proof finishes.
\end{proof}

\section{Proof of Proposition \ref{pp1}}\label{s5}
The existence of the Ornstein-Uhlenbeck process solution of \eqref{eq Ou} was already proved in  Section \ref{s3}. In this section we guarantee that there exists at most one random element
$\Y$
taking values in $\mc C([0,T],\mathcal{S}'_{\beta}(\bb R))$ such that  \emph{i)} and \emph{ii)} of Proposition
\ref{pp1} hold. The next lines follow closely from  \cite[page 307]{kl}. The key result is the equality $T^\beta_{t+\eps} H-T^\beta_t H=\eps \Delta_\beta T_t^\beta H+o(\eps)$,
which is well-known  for $\beta\in{[0,1)}$. Since the semigroups $T^\alpha_t$ and $T_t^{\textrm{Neu}}$
are written in terms of the gaussian kernel, the same property holds for them, provided $H$ is in the corresponding domain.
In what follows, the same arguments apply for all cases of $\beta$, and we
will just write $T^\beta_t$ for the corresponding semigroup.\medskip

Fix $H\in\mc S_\beta(\bb R)$ and $s>0$. Recall from \eqref{Bmotion} that  $ \mc M_t(H)(2\chi(\rho)\|\B H\|_{2,\beta}^2)^{-1/2}$
is a standard Brownian motion. Therefore, by It\^o's Formula, the process $\{X_t^s(H)\,;\,t\geq s\}$ defined by
\begin{equation*}
 X_t^s(H)=\exp\Bigg\{\frac{1}{2}(t-s)\Vert \B H\Vert^2_{2,\beta}+ i\Big( \Y_t(H) - \Y_s(H) -\int_s^t\Y_r(\A
H)\,dr\Big)\Bigg\}
\end{equation*}
is a martingale. Fix $S>0$. We affirm now that the process $\{Z_t\,;\, 0\leq t\leq S\}$  defined by
\begin{equation*}
 Z_t=\exp\Big\{ \frac{1}{2}\int_0^t\Vert \B T^\beta_{S-r} H\Vert^2_{2,\beta}\,dr +i\,\Y_t(T^\beta_{S-t} H)\Big\}
\end{equation*}
is also a martingale. To prove this, consider two times $0\leq t_1<t_2\leq S$ and a partition of the interval $[t_1,t_2]$ in $n$
intervals of equal size, or else, $t_1=s_0<s_1<\cdots<s_n=t_2\,,$
with $s_{j+1}- s_j=(t_2-t_1)/n$. Observe now that
\begin{equation*}
\begin{split}
\prod_{j=0}^{n-1} X_{s_{j+1}}^{s_j}(T^\beta_{S-s_j}H)=&\exp\Bigg\{  \frac{1}{2n}\sum_{j=0}^{n-1}
\Vert \B T^\beta_{S-s_j}H\Vert^2_{2,\beta}\\
& +i\,\sum_{j=0}^{n-1}
\Big( \Y_{s_{j+1}}(T^\beta_{S-s_j}H) - \Y_{s_j}(T^\beta_{S-s_j}H) -\int_{s_j}^{s_{j+1}}\Y_r(\A
T^\beta_{S-s_j}H)\,dr\Big)\Bigg\}\,.\\
\end{split}
\end{equation*}
As $n\to +\infty$, the first sum inside the exponential above converges to
\begin{equation*}
\frac{1}{2}\int_{t_1}^{t_2}\Vert \B T^\beta_{S-r}H\Vert^2_{2,\beta} \,dr\,,
\end{equation*}
because it is a Riemann sum. The second sum inside the exponential can be rewritten as
\begin{equation*}
 \Y_{t_2}(T^\beta_{S-t_2+\frac{1}{n}}H)-\Y_{t_1}(T^\beta_{S-t_1}H)+ \sum_{j=1}^{n-1}\!\!
\Big( \Y_{s_{j}}(T^\beta_{S-s_{j-1}}\!\!\!\!H-T^\beta_{S-s_j}H) -\int_{s_j}^{s_{j+1}}\!\!\Y_r(\A
T^\beta_{S-s_j}H)\,dr\Big)\,.
\end{equation*}
Since $\Y\in \mc C([0,T],\mc S_\beta'(\bb R))$, since $T^\beta_t H$ is  continuous in time and applying the expansion
$T_{t+\eps}^\beta H-T^\beta_t H=\eps \A T_t^\beta H+o(\eps)$,
we conclude that the almost sure limit of the previous expression is just
$\Y_{t_2}(T^\beta_{S-t_2}H)-\Y_{t_1}(T^\beta_{S-t_1}H)\,.$
Thus, we have obtained that
\begin{equation*}
 \lim_{n\to {+\infty}} \prod_{j=0}^{n-1} X_{s_{j+1}}^{s_j}(T^\beta_{S-s_j}H)=\exp\Bigg\{
 \frac{1}{2}\int_{t_1}^{t_2}\Vert \B T^\beta_{S-r}H\Vert^2_{2,\beta} \,dr+
 i\Big(\Y_{t_2}(T^\beta_{S-t_2}H)-\Y_{t_1}(T^\beta_{S-t_1}H)\Big)\Bigg\},
\end{equation*}
 which equals to $\frac{ Z_{t_2}}{Z_{t_1}}$ almost surely. Since the complex exponential is bounded, the Dominated Convergence
Theorem ensures also the $L^1$
convergence, which on the other hand implies that
\begin{equation*}
 \bb E_\rho^\beta\Big[G\,\frac{Z_{t_2}}{Z_{t_1}} \Big] =\lim_{n\to {+\infty}} \bb E^\beta_\rho\Big[G\,\prod_{j=0}^{n-1}
X_{s_{j+1}}^{s_j}(T^\beta_{S-s_j}H) \Big]\,,
\end{equation*}
for any bounded function $G$. Take $G$ bounded and  $\mc F_{t_1}$-measurable. Since for any $H\in \mc
S_\beta(\bb R)$, the process $X_t^s(H)$ is a martingale, by taking the
conditional expectation with respect to
$\mc F_{s_{n-1}}$ we can see that
\begin{equation*}
 \bb E_\rho^\beta\Big[G\,\prod_{j=0}^{n-1}
X_{s_{j+1}}^{s_j}(T^\beta_{S-s_j}H) \Big]=\bb E_\rho^\beta\Big[G\,\prod_{j=0}^{n-2}
X_{s_{j+1}}^{s_j}(T^\beta_{S-s_j}H) \Big]\,.
\end{equation*}
By induction, we conclude that
\begin{equation*}
  \bb E_\rho^\beta\Big[G\,\frac{Z_{t_2}}{Z_{t_1}} \Big] = \bb E_\rho^\beta\Big[G \Big]\,,
\end{equation*}
for any $G$ bounded and $\mc F_{t_1}$-measurable, which proves that $\{Z_t\,;\,t\geq 0\}$ is a martingale. From
$\bb E_\rho^\beta[Z_{t} | \mc F_s] = Z_{s}$, we get
\begin{equation*}
\begin{split}
 &\bb E_\rho^\beta\Big[\exp\Big\{ \frac{1}{2}\int_0^t\Vert \B T^\beta_{S-r} H\Vert^2_{2,\beta}\,dr
+i\,\Y_t(T^\beta_{S-t} H)\Big\}\Big\vert \mc F_s\Big]\\
&= \exp\Big\{ \frac{1}{2}\int_0^s\Vert \B T^\beta_{S-r} H\Vert^2_{2,\beta}\,dr +i\,\Y_s(T^\beta_{S-s} H)\Big\}\,,
\end{split}
\end{equation*}
which in turn gives
\begin{equation*}
\bb E_\rho^\beta\Big[\exp\Big\{ i\,\Y_t(T^\beta_{S-t}
H)\Big\}\Big\vert \mc F_s\Big] = \exp\Big\{ -\frac{1}{2}\int_s^t\Vert \B T^\beta_{S-r} H\Vert^2_{2,\beta}\,dr
+i\,\Y_s(T^\beta_{S-s} H)\Big\}\,.
\end{equation*}
Since $T_{S-s}^\beta H=T_{t-s}^\beta T_{S-t}^\beta H$, performing  a change of variables in  $H$ and then a change of variables in time, we are lead to
\begin{equation*}
\bb E_\rho^\beta\Big[\exp\Big\{ i\,\Y_t(H)\Big\}\Big\vert \mc F_s\Big] = \exp\Big\{ -\frac{1}{2}\int_0^{t-s}\Vert \B
T^\beta_{r} H\Vert^2_{2,\beta}\,dr
+i\,\Y_s(T^\beta_{t-s} H)\Big\}\,.
\end{equation*}
Replacing $H$ by $xH$, where $x\in \bb R$, we get that conditionally to $\mc F_s$, the random variable $\Y_t(H)$ has  gaussian
distribution of mean $\Y_s(T^\beta_{t-s}H)$ and variance $\int_0^{t-s}\Vert \B T^\beta_{r} H\Vert^2_{2,\beta}\,dr$.
Successive conditioning  implies the uniqueness of the finite dimensional distributions of  the process $\{\Y_t(H)\,;\,t\in{[0,T]}\}$,
which in turn gives uniqueness in law of the random element $\Y$, finishing the proof.

\section{Central Limit Theorem for the Current}\label{s6}
In this section we follow \cite{g,jl,rv}. Recall the definition of the current $J^n_{x,x+1}(t)$ given in Subsection
\ref{sub_eq}. Since the system starts from the equilibrium $\nu_\rho$ and the dynamics is symmetric, then $\mathbb{E}_\rho^\beta[{J}^n_{x,x+1}(t)]=0$,  for any
time $t\geq{0}$ and any site $x\in{\mathbb{Z}}$.

For any $x\in{\mathbb{Z}}$, if the number of
particles in the configuration $\eta$ is finite,  we can  write
\begin{equation*}
{J}^n_{x,x+1}(t)=\sum_{y\geq{x+1}}\Big(\eta_{tn^2}(y)-\eta_{0}(y)\Big).
\end{equation*}
In such case, the current through the bond $\{\lfloor u n\rfloor-1,\lfloor u n\rfloor\}$ can be written in
terms of the density fluctuation field
$\mathcal{Y}_{t}^{n}$ as
\begin{equation*}
\frac{{J}^n_{u}(t)}{\sqrt{n}}=\mathcal{Y}_{t}^{n}(H_{u
})
-\mathcal{Y}_{0}^{n}(H_{u}),
\end{equation*}
where $H_{u}$ is the Heaviside function, or else, $H_{u}(x)={\bf 1}_{[u,{+\infty})}(x)$. Our goal is to take the limit as $n\to+\infty$ in the previous equality. At this point we face two problems. Firstly, the equality itself makes no sense unless the configuration $\eta$ has a finite numbers of particles. Secondly, the
Heaviside function does not belong to the space $\mc S_{\beta}(\bb R)$.
To overcome these difficulties, we  notice that by the conservation on the number of particles it holds that
\begin{equation}\label{J}
 J^n_{x-1,x}(t)-J^n_{x,x+1}(t)\;=\;\eta_t(x)-\eta_0(x).
\end{equation}
 Next, we define a sequence of functions $\{G^u_j\}_{j\in{\bb N}}$ such that
 $G^u_{j}(x):=(1-\frac{x-u}{j})^{+}H_u(x)$,
approximating the Heaviside function $H_u$. For these functions, the process $\mathcal{Y}_{t}^{n}(G^u_j)$ makes sense,
no matter the finiteness of the total number of particles.  A discrete
integration by parts together with \eqref{J} gives
\begin{equation*}
 \mathcal{Y}_{t}^{n}(G_j^u)-\mathcal{Y}_{0}^{n}(G_j^u)=\frac{1}{\sqrt{n}}
 \sum_{x\in \bb Z }\Big(G_j^u(\pfrac{x+1}{n})-G_j^u(\pfrac{x}{n})\Big)\, J^n_{x,x+1}(t)\,.
\end{equation*}
As $j\to{+\infty}$, the derivative of $G_j^u$ becomes zero except at the discontinuity point $x=u$. This motivates the next
lemma.
\begin{lemma}\label{prop for current}
For every $t\geq{0}$ and for every $\beta\in[0,{+\infty}]$,
\begin{equation*}
\lim_{j\rightarrow{+\infty}}\mathbb{E}_\rho^\beta\Big[\Big(\frac{{J}^n_{u}(t
)}{\sqrt{n}}-(\mathcal{Y}_{t}^{n}(G^u_{j})
-\mathcal{Y}_{0}^{n}(G^u_{j}))\Big)^2\Big]=0\,,
\end{equation*}
uniformly over $n$.
\end{lemma}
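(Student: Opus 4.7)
The plan is to rewrite both sides as sums of currents via discrete summation by parts, then decompose each current into a martingale plus an additive functional and exploit telescoping to reduce the problem to one boundary term.

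First, from the conservation law $\eta_{tn^2}(y)-\eta_0(y)=J^n_{y-1,y}(t)-J^n_{y,y+1}(t)$ and Abel summation,
\[
\mathcal{Y}^n_t(G^u_j)-\mathcal{Y}^n_0(G^u_j)\;=\;\frac{1}{\sqrt n}\sum_{x\in\mathbb Z}\bigl[G^u_j(\tfrac{x+1}{n})-G^u_j(\tfrac{x}{n})\bigr]J^n_{x,x+1}(t).
\]
This identity is rigorous despite the infinite number of particles, since $G^u_j$ has compact support. Its increments equal $1$ at the single bond crossing the discontinuity of $H_u$ at $u$ (producing exactly $J^n_u(t)/\sqrt n$) and $-1/(nj)$ at each of the $nj$ bonds covering $[u,u+j]$, so up to boundary corrections of order $1/(nj)$,
\[
\frac{J^n_u(t)}{\sqrt n}-\bigl(\mathcal{Y}^n_t(G^u_j)-\mathcal{Y}^n_0(G^u_j)\bigr)\;=\;\frac{1}{nj\sqrt n}\sum_{x\in I^n_j}J^n_{x,x+1}(t),
\]
where $I^n_j$ denotes the window of $nj$ consecutive bonds covering $[u,u+j]$. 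It therefore suffices to bound the $L^2$ norm of the right-hand side uniformly in $n$.

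Next, Dynkin's formula gives $J^n_{x,x+1}(t)=M^{n,x}_t+n^2\xi^n_{x,x+1}\int_0^t(\eta_{sn^2}(x)-\eta_{sn^2}(x+1))\,ds$, where the martingales $M^{n,x}$ are pairwise orthogonal and $\mathbb E_\rho^\beta[\langle M^{n,x}\rangle_t]\leq 2n^2\xi^n_{x,x+1}t\chi(\rho)$. When summed over $I^n_j$ the additive functional telescopes to $n^2\int_0^t(\eta_{sn^2}(a)-\eta_{sn^2}(b))\,ds$ where $a,b$ are the endpoints of the window, plus at most a localized slow-bond correction which is strictly smaller thanks to the factor $\xi^n_{-1,0}=\alpha n^{-\beta}$. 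The martingale part contributes $\sum_{x\in I^n_j}\mathbb E_\rho^\beta[\langle M^{n,x}\rangle_t]\leq 2n^3 j\,t\,\chi(\rho)$ to the variance, hence at most $2t\chi(\rho)/j$ after dividing by $(nj\sqrt n)^2$. For the boundary term one uses self-duality in equilibrium
\[
\mathbb E_\rho^\beta\!\bigl[(\eta_{sn^2}(a)-\eta_{sn^2}(b))(\eta_{rn^2}(a)-\eta_{rn^2}(b))\bigr]\;=\;2\chi(\rho)\bigl(p^n_{|s-r|n^2}(a,a)-p^n_{|s-r|n^2}(a,b)\bigr),
\]
where $p^n_\sigma$ is the transition kernel of the random walk with rates $\xi^n$; together with the on-diagonal bound $p^n_\sigma(a,a)\leq C/\sqrt\sigma$, a double integration in $(r,s)$ yields $\mathbb E_\rho^\beta[(n^2\int_0^t(\eta(a)-\eta(b))\,ds)^2]\leq Cn^3t^{3/2}$, hence a contribution of order $t^{3/2}/j^2$ after normalization.

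Both bounds are independent of $n$ and vanish as $j\to+\infty$, which proves the lemma. The main delicacy is verifying the return estimate $p^n_\sigma(a,a)\leq C/\sqrt\sigma$ uniformly in $n$ and in the position of $a$ relative to the slow bond: for $\beta<1$ this is just the standard heat-kernel bound since the slow bond is asymptotically transparent, while for $\beta\geq 1$ one has to appeal to the explicit Neumann/Robin semigroups derived in Section~\ref{s4}, or equivalently to the $L^2$ estimates already packaged in Section~\ref{s7}.
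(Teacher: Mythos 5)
Your argument reaches the right conclusion but by a genuinely different route from the paper's. The paper writes
\begin{equation*}
\frac{J^n_u(t)}{\sqrt n}-\big(\mathcal Y^n_t(G^u_j)-\mathcal Y^n_0(G^u_j)\big)\;=\;\mathcal M^n_t(H_u-G^u_j)+\mathcal I^n_t(H_u-G^u_j)
\end{equation*}
and feeds $f=H_u-G^u_j$ directly into Lemmas \ref{martingaleL2bounds} and \ref{integralL2bounds}: since $\nabla_n f\equiv 1/j$ on the window and vanishes elsewhere, both lemmas give bounds of order $t\chi(\rho)/j$ uniformly in $n$, the integral term being controlled by the Kipnis--Varadhan variational estimate (Proposition A1.6.1 of \cite{kl}) with no heat-kernel input at all. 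You instead sum by parts onto the currents, split each $J^n_{x,x+1}$ into martingale plus compensator, telescope the compensators, and estimate the surviving boundary term by self-duality. Your martingale computation (orthogonality of the $M^{n,x}$, $\mathbb E_\rho^\beta[\langle M^{n,x}\rangle_t]=2n^2\xi^n_{x,x+1}\,t\,\chi(\rho)$, net contribution $2t\chi(\rho)/j$) is correct, as is the estimate $n^4\iint p^n_{|s-r|n^2}(a,a)\,dr\,ds\lesssim n^3t^{3/2}$ once the on-diagonal bound is granted; what each approach buys is clear: yours is more probabilistic and self-contained in spirit, the paper's recycles two lemmas it needs anyway for tightness.

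Two points need repair. First, the slow-bond correction left over after telescoping is $(\alpha n^{-\beta}-1)\,n^2\int_0^t(\eta_{sn^2}(-1)-\eta_{sn^2}(0))\,ds$, whose prefactor tends to $-1$ for $\beta>0$; it is not ``strictly smaller'' because of the factor $\alpha n^{-\beta}$, it is simply one more single-bond term of exactly the same type as your boundary term and must be estimated the same way (harmless, but the stated reason is wrong). Second, and more substantively, the uniform on-diagonal bound $p^n_\sigma(a,a)\le C/\sqrt\sigma$ for the discrete walk with conductance $\alpha n^{-\beta}$ at one bond does not follow from the continuum semigroup formulas of Section \ref{s4} (those concern the limiting PDEs, not the discrete kernel, and say nothing uniform in $n$) nor from the $H_{-1}$ estimates of Section \ref{s7}. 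The bound is true and provable: lowering a conductance only decreases the Dirichlet form, so the walk with the slow bond has Dirichlet form bounded below by that of the walk with the bond removed, i.e.\ two reflected half-line walks, each of which satisfies the one-dimensional Nash inequality; this yields $p^n_\sigma(a,a)\le C/\sqrt\sigma$ uniformly in $n$, $\alpha$, $\beta$ and in the position of $a$. You should either supply this comparison/Nash argument, or note that your weighted sum of compensators is exactly $\mathcal I^n_t(H_u-G^u_j)$ and invoke Lemma \ref{integralL2bounds}, which replaces the entire duality step.
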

\begin{proof}
Recall \eqref{martingale decomposition} and \eqref{I}.
A simple computation together with \eqref{J} shows that
\begin{equation*}
\frac{J^n_{u}(t)}{\sqrt{n}}-(\mathcal{Y}_{t}^{n}(G^u_{j})
-\mathcal{Y}_{0}^{n}(G^u_{j}))=\mathcal{M}_{t}^{n}(H_u-G^u_{j})+\mc I_{t}^{n}(H_u-G^u_{j}).
\end{equation*}
 By the inequality $(x+y)^2\leq{2x^2+2y^2}$, in order to prove the lemma, it is enough to show that the second moment of the
two terms on the right hand side of the previous equality vanishes as $j\rightarrow{+\infty}$, uniformly over $n$.

Taking
$f=H_u-G_j^u$ in Lemma \ref{martingaleL2bounds} we have that
\begin{equation*}
\mathbb{E}_\rho^\beta[(\mathcal{M}_{t}^{n}(f))^2]\leq{t\Big\{2\chi(\rho)\Big[ \,\frac{1}{n}\sum_{x\neq
-1}\!\!
\big(\nabla_nf(\pfrac{x}{n})\big)^{2} +  n^{1-\beta}
\big(f(\pfrac{0}{n})-f(\pfrac{-1}{n})\big)^{2}\Big]+O_f({1}/{j})\Big\}}.
\end{equation*}
Hence, by the definition of $f$ we can bound the previous expression by $2\chi(\rho)/j$, which vanishes as
$j\rightarrow{+\infty}$.
On the other hand, taking
$f=H_u-G_j^u$ in Lemma \ref{integralL2bounds}, we get to
\begin{equation*}
\mathbb{E}_\rho^\beta[(\mc I_{t}^{n}(f))^2]\leq{80\,t\,\Big\{\chi(\rho)\Big[ \frac{1}{n}\sum_{x\neq -1}
\!\!\!\big(\nabla_nf(\pfrac{x}{n})\big)^{2} +  n^{1-\beta}
\big(f(\pfrac{0}{n})-f(\pfrac{-1}{n})\big)^{2}\Big]\!+\!O_f(1/j)\Big\}},
\end{equation*}
which can be bounded from above by $80\,t\,\chi(\rho)/j$ and vanishes as $j\rightarrow{+\infty}$,
finishing the proof of this lemma.
\end{proof}

\begin{proof}[Proof of Theorem \ref{current clt}]
The proof follows from the previous lemma and Theorem \ref{flu1}. We start with some considerations that work for all
$\beta\in[0,{+\infty}]$.
% To fix ideas, consider $u>0$. The case $u=0$ has minor technical details to be discussed later.
% \noindent \textbf{Case} $\beta\in{[0,1)}$:

First we observe that the functions $G^u_j$ do not belong to $\mc S_\beta(\bb R)$. So, we fix $j\in{\mathbb{N}}$ and approximate each  $G^u_j$, in the $L^2(\mathbb R)$-norm with respect to the Lebesgue measure, by a sequence of smooth functions of compact support,
let us say  $H^u_{k,j}$. Moreover, we choose $H^u_{k,j}$ constant in a neighborhood of zero, which
ensures that $H^u_{k,j}\in\mc S_\beta(\bb R)$. For these functions we have convergence of the density fields. Moreover, for  fixed $t\geq{0}$ we have that
 \begin{equation*}
 \begin{split}
 \mathbb{E}_\rho^\beta\big[\big(\mathcal{Y}_t^n(H^u_{k,j})-\mathcal{Y}_t^n(G^u_j)\big)^2\big]&=\mathbb{E}_\rho^\beta\big[\big(\mathcal{Y}_t^n(H^u_{k,j}-G^u_j)\big)^2\big]\\
 &=\mathbb{E}_\rho^\beta\Big[\Big(\frac{1}{\sqrt {n}}\sum_{x\in{\bb Z}}(H^u_{k,j}-G^u_j)\big(\pfrac{x}{n}\big)\bar{\eta}_{tn^2}(x)\Big)^2\Big]\\
 &\leq{\chi(\rho)\|H^u_{k,j}
-G_j^u\|_2^2}\,,
\end{split}
 \end{equation*}
 which vanishes as $k\to +\infty$, by hypothesis.
 Hence $\mathcal{Y}_t^n(H^u_{k,j})$ converges to $\mathcal{Y}_t^n(G^u_j)$ in $L^2(\mathbb{P}_\rho^\beta)$, as $k\to{{+\infty}}$. By Theorem \ref{flu1}, we have that $\mathcal{Y}_t^n(H^u_{k,j})$ converges  to $\mathcal{Y}_t(H^u_{k,j})$ in distribution, as
$n\to{+\infty}$.
On the other hand, since for all $H,G\in\mathcal{S}_\beta(\mathbb{R})$,
\begin{equation}\label{T_beta}
 \mathbb{E}_\rho^\beta[\mathcal{Y}_t(H)\mathcal{Y}_s(G)]=\chi(\rho)\int_{\mathbb R}T^\beta_{t-s}H(v)G(v)dv\,,
\end{equation}
and since
 $\mathcal{Y}_t$ is linear, we have
 \begin{equation*}
 \begin{split}
 \mathbb{E}_\rho^\beta[(\mathcal{Y}_t(H^u_{k,j})-\mathcal{Y}_t(G^u_j))^2]&=\mathbb{E}_\rho^\beta[(\mathcal{Y}_t(H^u_{k,j}-G^u_j))^2]\\
&= \chi(\rho)\|H^u_{k,j}-G_j^u\|_2^2\,.
 \end{split}
\end{equation*}

Therefore $\mathcal{Y}_t(H^u_{k,j})$ converges to $\mathcal{Y}_t(G^u_j)$ in $L^2$, as $k\to{+\infty}$.
As a consequence of the previous results, $\mathcal{Y}_t^n(G^u_j)$ converges to $\mathcal{Y}_t(G^u_j)$ in distribution, as $n\to{+\infty}$.
By the previous lemma, $\{\mathcal{Y}_{t}^{n}(G^u_{j})
-\mathcal{Y}_{0}^{n}(G^u_{j})\}_{j\in\bb N}$ is a Cauchy sequence uniformly in $n$. Then, $\{\mathcal{Y}_{t}(G^u_{j})
-\mathcal{Y}_{0}(G^u_{j})\}_{j\in{\mathbb{N}}}$
is also a Cauchy sequence and converges, as $j\to +\infty$, to some random variable with gaussian distribution.
We denote such limit by $\mathcal{Y}_{t}(H_u)-\mathcal{Y}_{0}(H_u)$. Therefore, the normalized current ${J}_{u}^{n}(t)/\sqrt n$
converges to a gaussian random variable, which
formally reads as $\mathcal{Y}_{t}(H_{u})-\mathcal{Y}_{0}(H_{u}),$
where $\mathcal{Y}_t$ is the solution of the Ornstein-Uhlenbeck equation \eqref{OU}. Since the distributions of
$\mathcal{Y}_{t}(H_{u})$ are gaussian, this implies the limit current to be gaussian distributed.

The same argument can be applied to show the
same result for any vector
$({J}^n_u(t_{1}),..,{J}^n_u({t_{k}}))$.

We claim that to compute the covariance,  it is enough to compute the variance. Reversibility plus a simple computation
together with \eqref{T_beta} yields
\begin{equation}\label{var OU}
\begin{split}
\mathbb{E}_\rho^\beta[({J}_u({t}))^2]=&2\mathbb{E}_\rho^\beta[\mathcal{Y}_0(H_u)(\mathcal{Y}_{0}(H_{u})-\mathcal{Y}_{t}(H_{u}))]\\
=& 2\chi(\rho)\langle H_u,H_u-T^\beta_tH_u\rangle\,,
\end{split}
\end{equation}
where  $\<\cdot,\cdot\>$ denotes the inner product in $L^2(\bb R)$.
Above we used \eqref{T_beta} despite $H_u$ is not in $\mathcal{S}_\beta(\mathbb{R})$. Nevertheless, by approximating arguments
as above one can get that equality for $H_u$. Then, linearity shows that the covariance can be written as
\begin{equation*}
\begin{split}
\mathbb{E}_\rho^\beta[{J}_{u}(t){J}_u(s)]&=\chi(\rho)\Big[\langle H_u,H_u-T^\beta_tH_u\rangle +\langle
H_u,H_u-T^\beta_sH_u\rangle-\langle H_u,H_u-T^\beta_{t-s}H_u\rangle\Big]\\
&=\frac{1}{2}\Big\{\mathbb{E}_\rho^\beta[({J}_{u}(t))^2]+\mathbb{E}_\rho^\beta[({J}_{u}(s))^2]-\mathbb{E}_\rho^\beta[({J}_{u}(t-s))^2]\Big\}\,.
\end{split}
\end{equation*}
Therefore, we only need to compute the variance for each one of the regimes of $\beta$.

\quad

 $\bullet$ Case $\beta\in{[0,1)}$.

\smallskip

 Recalling  \eqref{sem heat eq}, we have that
\begin{equation*}
\begin{split}
\langle H_u,H_u-T^\beta_tH_u\rangle=&\int_{u}^{{+\infty}}\Big(1-\int_{u}^{+\infty}\frac{1}{\sqrt{4\pi
t}}e^{-\frac{(x-y)^2}{4t}}dy\Big)dx=\sqrt\frac{t}{\pi}\,.
\end{split}
\end{equation*}
From \eqref{var OU} we get
\begin{equation*}
\mathbb{E}_\rho^\beta[{J}_u({t}){J}_u(s)]=\chi(\rho)\Big(\sqrt{\frac{t}{\pi}}+\sqrt{\frac{s}{\pi}}-\sqrt{
\frac{t-s}{\pi} } \Big).
\end{equation*}

\quad

$\bullet$ Case $\beta=1$.

\smallskip

Recalling Proposition \ref{prop23},  we have that $\langle H_u,H_u-T^\beta_tH_u\rangle$ is equal to
\begin{equation*}
\begin{split}
&\int_{u}^{{+\infty}}\Big(1-\int_{-\infty}^{-u}\frac{1}{2\sqrt{4\pi t}}e^{-\frac{(x-y)^2}{{4t}}}dy-\int_{u}^{+\infty}\frac{1}{2\sqrt{4\pi
t}}e^{-\frac{(x-y)^2}{{4t}}}dy\\
-&e^{2\alpha x}\int_{x}^{+\infty}\frac{e^{-2\alpha z}}{2}\int_{u}^{+\infty}\Big\{\frac{z-y+4\alpha t}{2t\sqrt{4\pi
t}}e^{-\frac{(z-y)^2}{{4t}}}+\frac{z+y-4\alpha t}{2t\sqrt{4\pi t}}e^{-\frac{(z+y)^2}{4t}}\Big\}dy\,dz\Big)dx\,,\\
\end{split}
\end{equation*}
which can be rewritten as
\begin{equation*}
\begin{split}
&\int_{u}^{{+\infty}}\Big(\frac{1}{2}+\int_{-u}^{-u}\frac{1}{2\sqrt{4\pi t}}e^{-\frac{(x-y)^2}{4t}}dy\\
&-e^{2\alpha x}\int_{x}^{+\infty}\!\!\!\frac{e^{-2\alpha z}}{2}\Big\{-\int_{z-u}^{z+u}\!\!\frac{v}{2t\sqrt{4\pi
t}}e^{-\frac{v^2}{{4t}}}dv+2\alpha-2\alpha \Phi_{2t}(z-u)-2\alpha\Phi_{2t}(z+u)\Big\}dz\Big)dx.
\end{split}
\end{equation*}
A long but elementary computation shows that
\begin{equation*}
\langle H_u,H_u-T^\beta_tH_u\rangle=\sqrt\frac{t}{\pi}+\frac{\Phi_{2t}(2u+4\alpha t)e^{4\alpha u} e^{4\alpha^2 t}}{2\alpha}-\frac{\Phi_{2t}(2u)}{2\alpha}\,,
\end{equation*}
which from \eqref{var OU} is enough to conclude.

\quad

 $\bullet$ Case $\beta\in{(1,{+\infty}]}$.

\smallskip

Recalling \eqref{sem heat eq neu}, we have that
\begin{equation*}
\begin{split}
\langle H_u,H_u-T^\beta_tH_u\rangle =&\int_{u}^{+\infty}\Big(1-\int_{u}^{+\infty}\frac{1}{\sqrt{4\pi
t}}e^{-\frac{(x-y)^2}{4t}}dy-\int_{u}^{+\infty}\frac{1}{\sqrt{4\pi t}}e^{-\frac{(x+y)^2}{4t}}dy\Big)dx\\
=&\sqrt\frac{t}{\pi}\Big[1-e^{-u^2/t}\Big]+2u\Phi_{2t}(2u)\,,
\end{split}
\end{equation*}
which from \eqref{var OU} concludes the proof.
\end{proof}

\begin{proof}[Proof of Corollary \ref{limit robin current}]
 In order to prove the result notice that gaussian processes are
characterized by its covariance,  and the limit of the covariance guarantees the convergence of the processes in the sense of
finite dimensional distributions. Thus, it is sufficient to show that
\begin{equation*}
\lim_{\alpha\rightarrow{0}}\frac{\Phi_{2t}(2u+4\alpha
t)e^{4\alpha u+4\alpha^2t}}{2\alpha}-\frac{\Phi_{2t}(2u)}{2\alpha}=2u\Phi_{2t}(2u)-\sqrt{\frac{t}{\pi}}e^{-u^2/t}
\end{equation*}
and
\begin{equation*}
\lim_{\alpha\rightarrow{+\infty}}\frac{\Phi_{2t}(2u+4\alpha t)e^{4\alpha u+4\alpha^2t}}{2\alpha}-\frac{\Phi_{2t}(2u)}{2\alpha}=0.
\end{equation*}
The first limit comes out by L'Hôpital's Rule and  the second one is consequence of
the estimate $\int_a^{+\infty}e^{-x^2/2}dx\leq{\frac{1}{a}e^{-a^2/2}}$, for $a\in{\mathbb{R}}$.
\end{proof}

\section{Some useful $L^2$ estimates}\label{s7}

In this section we prove what we call {\em Local Replacement} which is fundamental in characterizing the limit points of the
density fluctuation field.

For a function $g\in L^2(\nu_\rho)$, we denote by $\mc D_n(g)$  the Dirichlet form of the function $g$,  defined as
$\mc D_n (g) \;=\; -\int g(\eta) \mc L_n g(\eta) \,\nu_\rho(d\eta) .$
 An elementary computation shows
that
\begin{equation}\label{dirichlet}
\mc D_n (g) \;=\; \sum_{x\in \bb Z} \frac{ \xi_{x,x+1}^n}{2}
\int  \Big( g(\eta^{x,x+1}) -
g(\eta) \Big)^2 \,\nu_\rho(d\eta)\;.
\end{equation}

Recall from \eqref{empirical average} that
\begin{equation*}
 \bar \eta^{\ell}(x)\;=\;\frac{1}{\ell}\sum_{y=x}^{x+\ell-1}\bar \eta(y).
\end{equation*}
 \begin{lemma}[{\em{Local Replacement}}] \label{2orderBG}
\quad

 For $\beta\in[0,{+\infty}]$, for $\ell\geq{1}$ and for $x=-1$ it holds that
\begin{equation*}
\mathbb{E}_\rho^\beta\Big[\Big(\int_{0}^t \{\bar \eta_{sn^2}(x)-\bar
\eta_{sn^2}^\ell(x)\}ds\Big)^2\Big]\leq{\frac{80t}{n^2}\chi(\rho)\Big(\alpha n^{\beta}+\ell\Big)}.
\end{equation*}
\end{lemma}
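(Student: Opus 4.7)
The plan is to combine the Kipnis--Varadhan inequality for reversible Markov processes with a sharp estimate of the $H_{-1}$-norm of the mean-zero local observable $V(\eta) := \bar\eta(-1) - \bar\eta^\ell(-1)$. Since the process is reversible with respect to $\nu_\rho$ and is generated by $n^{2}\mc L_n$, the Kipnis--Varadhan inequality reads
\begin{equation*}
\mathbb{E}_\rho^\beta\Big[\Big(\int_0^t V(\eta_{sn^2})\,ds\Big)^2\Big] \;\leq\; \frac{C\,t}{n^2}\,\|V\|_{-1}^2,
\end{equation*}
where $\|V\|_{-1}^{2} = \sup_f \big\{2\langle V, f\rangle_{\nu_\rho} - \mc D_n(f)\big\}$ and the supremum is taken over local functions $f\in L^2(\nu_\rho)$. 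It therefore suffices to show $\|V\|_{-1}^2\leq C\chi(\rho)(\alpha n^\beta+\ell)$, with the usual convention that $C$ may depend on $\alpha$ and absorbs numerical constants.

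First I would represent $V$ as a weighted telescoping sum of nearest-neighbour gradients. Starting from $V = \frac{1}{\ell}\sum_{y=0}^{\ell-2}\big(\bar\eta(-1)-\bar\eta(y)\big)$ and expanding each difference $\bar\eta(-1)-\bar\eta(y)$ as a sum of consecutive terms $\eta(z)-\eta(z+1)$ along the integer path from $-1$ to $y$, one obtains
\begin{equation*}
V \;=\; \sum_{z=-1}^{\ell-3} a_z \big(\eta(z)-\eta(z+1)\big),
\end{equation*}
with coefficients $a_z\in[0,1]$ satisfying $a_{-1}\leq 1$ and $\sum_{z\geq 0}a_z^2=O(\ell)$. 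The crucial observation is that every such path traverses the slow bond $\{-1,0\}$, which will be the sole source of the $n^\beta$ contribution.

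The second step uses the elementary identity
\begin{equation*}
\int (\eta(z)-\eta(z+1))\,f\,d\nu_\rho \;=\; \tfrac{1}{2}\int (\eta(z)-\eta(z+1))\big(f(\eta)-f(\eta^{z,z+1})\big)\,d\nu_\rho,
\end{equation*}
which is a consequence of the $\nu_\rho$-invariance of the exchange $\eta\mapsto\eta^{z,z+1}$. Applying Cauchy--Schwarz followed by Young's inequality bond-by-bond, with weight equal to the conductance $\xi_{z,z+1}^n$ on bond $\{z,z+1\}$, I get
\begin{equation*}
2\langle V,f\rangle_{\nu_\rho} \;\leq\; \chi(\rho)\sum_z \frac{a_z^2}{\xi_{z,z+1}^n} \;+\; \mc D_n(f).
\end{equation*}
Substituting $\xi_{-1,0}^n=\alpha n^{-\beta}$ and $\xi_{z,z+1}^n=1$ otherwise, taking the supremum over $f$, and using the bounds on $a_z$, the right-hand side is at most $C\chi(\rho)(n^\beta+\ell)$. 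Inserting this estimate into the Kipnis--Varadhan bound completes the proof.

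The main technical point is the balancing at the slow bond: because $\xi_{-1,0}^n$ is small, the Dirichlet form penalizes swaps across $\{-1,0\}$ only weakly, so absorbing that cross term against $\mc D_n(f)$ costs a factor of order $n^\beta$ on the variance side. The linear-in-$z$ weights coming naturally from the telescoping are precisely what equate the slow-bond contribution $a_{-1}^2\,n^\beta/\alpha$ with the bulk contribution $\sum_{z\geq 0}a_z^2$ of order $\ell$; any other distribution of weights along the paths would make one of the two terms strictly dominant.
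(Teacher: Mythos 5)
Your proposal is correct and follows essentially the same route as the paper: the bound of Proposition A1.6.1 of \cite{kl} (the Kipnis--Varadhan estimate) combined with a variational estimate of the $H_{-1}$-norm, obtained by telescoping $\bar\eta(-1)-\bar\eta^\ell(-1)$ into nearest-neighbour gradients, using the invariance of $\nu_\rho$ under $\eta\mapsto\eta^{z,z+1}$, and applying Cauchy--Schwarz with conductance weights so that only the slow bond produces the factor of order $n^{\beta}$. The only differences are cosmetic: you collect the double sum into coefficients $a_z$ before applying Cauchy--Schwarz and record the slow-bond term as $n^\beta/\alpha$ (which is in fact what the paper's own computation in Lemma \ref{usefullema} yields), whereas the paper keeps the double sum and fixes $2/A=n^2$ to obtain the explicit constant $80$.
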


In order to prove last lemma, we use the following result.

 \begin{lemma}\label{usefullema}
For $\beta\in[0,{+\infty}]$, for $g\in{L^2(\nu_\rho)}$, for a constant $A>0$ and for $x=-1$, it holds that
\begin{equation*}
\int \{\bar \eta(x)-\bar \eta^\ell(x)\}g(\eta)\nu_\rho(d\eta)\leq{A\chi(\rho)(\alpha n^\beta+\ell)+ \frac{1}{A}\mathcal{D}_n(g)},
\end{equation*}
where
$\mathcal{D}_n(g)$ is the Dirichlet form, see \eqref{dirichlet}.
\end{lemma}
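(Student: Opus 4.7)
The plan is to express $\bar\eta(-1) - \bar\eta^\ell(-1)$ as a weighted telescopic sum over bond gradients, convert each bond integral against $g$ into an integral against a $g$-gradient via reversibility of $\nu_\rho$, and balance the resulting ``variance'' and ``Dirichlet'' pieces by Young's inequality with weights tuned to the local conductances $\xi^n_{z,z+1}$.

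First I would telescope. Since $\bar\eta(-1)-\bar\eta^\ell(-1)=\tfrac{1}{\ell}\sum_{y=0}^{\ell-2}(\bar\eta(-1)-\bar\eta(y))$ and each inner difference is itself a telescopic sum $\sum_{z=-1}^{y-1}(\bar\eta(z)-\bar\eta(z+1))$, exchanging the order of summation yields
$$\bar\eta(-1)-\bar\eta^\ell(-1)\;=\;\sum_{z=-1}^{\ell-3} c_z\bigl(\bar\eta(z)-\bar\eta(z+1)\bigr),$$
with $c_{-1}=(\ell-1)/\ell$ and $c_z=(\ell-2-z)/\ell$ for $z\ge 0$, so $|c_z|\le 1$ and the slow bond $\{-1,0\}$ is separated from the rest.

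Next, for each bond $\{z,z+1\}$ the involution $\tau_{z,z+1}(\eta):=\eta^{z,z+1}$ preserves $\nu_\rho$ and flips the sign of $\bar\eta(z)-\bar\eta(z+1)$, so a change of variables gives
$$\int\bigl(\bar\eta(z)-\bar\eta(z+1)\bigr)g\,d\nu_\rho \;=\;\tfrac12\int\bigl(\bar\eta(z)-\bar\eta(z+1)\bigr)\bigl(g(\eta)-g(\eta^{z,z+1})\bigr)d\nu_\rho.$$
Applying Young $ab\le (B_z a^2+b^2/B_z)/2$ with parameter $B_z>0$, using $\int(\bar\eta(z)-\bar\eta(z+1))^2 d\nu_\rho=2\chi(\rho)$ and recognizing the single-bond summand of $\mc D_n$ in \eqref{dirichlet}, one obtains
$$c_z\int\bigl(\bar\eta(z)-\bar\eta(z+1)\bigr)g\,d\nu_\rho\;\le\;\frac{c_z B_z\chi(\rho)}{2}+\frac{c_z}{2B_z\,\xi^n_{z,z+1}}\,\mc D_n^{z}(g),$$
where $\mc D_n^z(g):=\tfrac{\xi^n_{z,z+1}}{2}\int(g-g\circ\tau_{z,z+1})^2 d\nu_\rho$.

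Finally, I would pick $B_z:=A c_z/(2\xi^n_{z,z+1})$, which makes the Dirichlet coefficient equal to $1/A$ on every bond; summing in $z$ yields at most $A^{-1}\mc D_n(g)$. The variance piece becomes
$$\sum_z \frac{A c_z^2 \chi(\rho)}{4\,\xi^n_{z,z+1}}\;=\;\frac{A\chi(\rho)}{4}\Biggl[\frac{(\ell-1)^2/\ell^2}{\alpha n^{-\beta}}+\sum_{z=0}^{\ell-3}\frac{(\ell-2-z)^2}{\ell^2}\Biggr],$$
and the two bracketed pieces are bounded respectively by $\alpha n^\beta$ and, by an elementary sum-of-squares estimate, by $\ell/3$, whence the total is at most $A\chi(\rho)(\alpha n^\beta+\ell)$. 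The main point to get right is the matching of the Young weights to the conductances: the singular factor $1/\xi^n_{-1,0}=n^\beta/\alpha$ on the slow bond is precisely what produces the $\alpha n^\beta$ term, while the $\ell-2$ regular bonds (with unit conductance and geometrically decaying coefficients $c_z$) contribute only linearly in $\ell$.
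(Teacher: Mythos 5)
Your proof is correct and follows essentially the same route as the paper's: telescoping $\bar \eta(-1)-\bar \eta^\ell(-1)$ into nearest-neighbour gradients, the change of variables $\eta\mapsto\eta^{z,z+1}$ under the reversible measure $\nu_\rho$, and a weighted Cauchy--Schwarz (Young) step with weights proportional to $1/\xi^n_{z,z+1}$, so that the slow bond yields the $n^\beta$ term and the $O(\ell)$ unit bonds the linear-in-$\ell$ term, exactly as in the paper; your interchange of the double sum (explicit coefficients $c_z$) and bond-dependent parameters $B_z$ only tidy the constants. One caveat, which you share with the paper's own statement and proof: the slow-bond weight is $1/\xi^n_{-1,0}=n^\beta/\alpha$, so the bound your computation (and the paper's) actually delivers is $A\chi(\rho)(n^\beta/\alpha+\ell)$ rather than $A\chi(\rho)(\alpha n^\beta+\ell)$ as claimed in your last step --- a harmless discrepancy since $\alpha$ is a fixed constant, but the inequality ``$(\ell-1)^2\ell^{-2}\,n^\beta/\alpha\le \alpha n^\beta$'' is not literally true for small $\alpha$.
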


\begin{proof}
In order to prove the previous lemma, we notice that by the definition of the empirical average given in \eqref{empirical
average}, we are able to write the integral in the statement of the lemma as
\begin{equation*}
\frac{1}{\ell} \sum_{y=x}^{x+\ell-1}\sum_{z=x}^{y-1} \int\{\eta(z)-\eta(z+1)\}g(\eta)\nu_\rho(d\eta).
\end{equation*}
Writing the previous expression as twice its half and performing the change of variables $\eta\mapsto\eta^{z,z+1}$, for which the
measure $\nu_\rho$ is invariant, we get to
\begin{equation*}
\frac{1}{2\ell}\sum_{y=x}^{x+\ell-1}\sum_{z=x}^{y-1}\int (\eta(z)-\eta(z+1))(g(\eta)-g(\eta^{z,z+1}))\nu_\rho(d\eta).
\end{equation*}
Now, by the Cauchy-Schwarz inequality we bound last expression by
\begin{equation*}
\begin{split}
&\frac{1}{2\ell} \sum_{y=x}^{x+\ell-1}\sum_{z=x}^{y-1}\frac{A}{\xi^n_{z,z+1}} \int(\eta(z)-\eta(z+1))^2\nu_\rho(d\eta)\\+&
\frac{1}{2\ell} \sum_{y=x}^{x+\ell-1}\sum_{z=x}^{y-1}\frac{\xi_{z,z+1}^n}{A} \int(g(\eta)-g(\eta^{z,z+1}))^2\nu_\rho(d\eta).
\end{split}
\end{equation*}
To finish the proof it is enough to recall  \eqref{dirichlet}.

\end{proof}

\begin{proof}[Proof of Lemma \ref{2orderBG}.]

By  Proposition A1.6.1 of \cite{kl} we have that
\begin{equation*}
\mathbb{E}_\rho^\beta\Big[\Big(\int_{0}^t \{\bar \eta_{sn^2}(x)-\bar \eta_{sn^2}^\ell(x)\}ds\Big)^2\Big]\leq{20\,t\|\bar
\eta(x)-\bar \eta^\ell(x)\|_{-1}^2}
\end{equation*}
\begin{equation*}
\begin{split}
&=20 \,t\sup_{g\in{L^2(\nu_\rho)}}\Big\{2\int \{\bar \eta(x)-\bar \eta^\ell(x)\}g(\eta)\nu_\rho(d\eta) - n^2\mathcal{D}_n(g)\Big\}\\
&\leq 20 \,t\sup_{g\in{L^2(\nu_\rho)}} \Big\{ 2A\chi(\rho)(\alpha n^\beta+\ell)+ \frac{2}{A}\mathcal{D}_n(g) -n^2\mathcal{D}_n(g)\Big\}.
 \end{split}
\end{equation*}
In last inequality we used the Schwarz inequality together with the previous lemma.
Taking $2/A=n^2$ the claim follows.
\end{proof}

\begin{remark}\label{useful remark}
 Using the same arguments as above, we obtain the statement of Lemma \ref{2orderBG} and Lemma \ref{usefullema} for $x=0$ exactly with the same bounds as for $x=-1$ but removing the term  $\alpha n^\beta$. This is a consequence of the fact that in this case we do not cross the slow bond.
\end{remark}

\begin{lemma}\label{martingaleL2bounds}
Fix $H\in\mc S_\beta(\bb R)$. For $\beta\in[0,{+\infty}]$ and for any $t\geq{0}$
 \begin{equation}\label{var_quad}
\begin{split}
 & \mathbb{E}_\rho^\beta\big[(\mc M^n_t(H))^2\big]= t\Big\{2\chi(\rho)\Big[ \pfrac{1}{n}\sum_{x\neq -1}
\big(\nabla_nH(\pfrac{x}{n})\big)^{2} + \alpha n^{1-\beta}
\big(H(\pfrac{0}{n})-H(\pfrac{-1}{n})\big)^{2}\Big]+O_H(\pfrac{1}{n})\Big\}
\end{split}
 \end{equation}
 and
\begin{equation*}
\lim_{n\to {+\infty}}\mathbb{E}_\rho^\beta[(\mc M_t^n(H))^2]=
2\chi(\rho)\,t \|\B H \|_{2,\beta}^2,
\end{equation*}
where $\mc M^n_t(H)$ is the martingale defined in \eqref{martingale decomposition}.
\end{lemma}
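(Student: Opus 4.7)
The plan is to compute $\mathbb{E}_\rho^\beta[(\mc M^n_t(H))^2]$ explicitly via the carré du champ and then pass to the limit in $n$ through a case analysis in $\beta$.

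First, since $\mc M^n_t(H)$ is the Dynkin martingale associated to the Markov process with generator $n^2\mc L_n$, one has
\begin{equation*}
\mathbb{E}_\rho^\beta\bigl[(\mc M^n_t(H))^2\bigr] \;=\; \mathbb{E}_\rho^\beta\Big[\int_0^t n^2\bigl\{\mc L_n (\mc Y^n_s(H))^2 - 2\mc Y^n_s(H)\,\mc L_n \mc Y^n_s(H)\bigr\}\,ds\Big].
\end{equation*}
The standard carré du champ identity applied to the exclusion-type generator \eqref{ln} rewrites the integrand as
\begin{equation*}
n^2\sum_{x\in\bb Z} \xi^n_{x,x+1}\bigl(\mc Y^n_s(H)(\eta^{x,x+1}_{sn^2}) - \mc Y^n_s(H)(\eta_{sn^2})\bigr)^2.
\end{equation*}
From the definition of $\mc Y^n(H)$, a direct check gives the elementary identity $\mc Y^n(H)(\eta^{x,x+1}) - \mc Y^n(H)(\eta) = \frac{1}{\sqrt n}\bigl[H(\pfrac{x+1}{n}) - H(\pfrac{x}{n})\bigr](\eta(x) - \eta(x+1))$. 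Squaring, using stationarity under $\nu_\rho$ and $\mathbb{E}_\rho[(\eta(x) - \eta(x+1))^2] = 2\chi(\rho)$, yields
\begin{equation*}
\mathbb{E}_\rho^\beta\bigl[(\mc M^n_t(H))^2\bigr] \;=\; 2\chi(\rho)\,t\,n\sum_{x\in\bb Z}\xi^n_{x,x+1}\bigl[H(\pfrac{x+1}{n}) - H(\pfrac{x}{n})\bigr]^2.
\end{equation*}
Splitting the sum at $x=-1$ (where $\xi^n_{-1,0} = \alpha n^{-\beta}$) and introducing $\nabla_n H(\pfrac{x}{n}) = n[H(\pfrac{x+1}{n}) - H(\pfrac{x}{n})]$ gives formula \eqref{var_quad}; the $O_H(1/n)$ is a slack term absorbing small smoothness corrections at the origin.

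To pass to the limit, I would note that $\frac{1}{n}\sum_{x\neq -1}(\nabla_n H(\pfrac{x}{n}))^2$ is a Riemann sum approximation of $\int_{\bb R}(\B H(u))^2\,du$, and it converges because $H\in \mc S_\beta(\bb R)$ is $C^\infty$ off $0$ with rapid decay. The boundary contribution $\alpha n^{1-\beta}[H(\pfrac{0}{n}) - H(\pfrac{-1}{n})]^2$ then requires a case analysis: for $\beta\in[0,1)$, smoothness of $H$ at $0$ forces the term to be $O(n^{-1-\beta})\to 0$, consistent with $\|\cdot\|_{2,\beta}$ having no atomic part; for $\beta\in(1,+\infty]$, the prefactor $n^{1-\beta}\to 0$ dominates and the term vanishes, again consistent with the absence of a Dirac mass in the norm; for $\beta=1$, the difference $H(\pfrac{0}{n}) - H(\pfrac{-1}{n})$ tends to $H(0^+) - H(0^-)$, so the contribution converges to $\alpha[H(0^+) - H(0^-)]^2$, which, using the Robin condition $H^{(1)}(0^\pm)=\alpha(H(0^+)-H(0^-))$ from Definition \ref{S heatrobin}, is exactly the Dirac point-mass contribution appearing in $\|\B H\|_{2,\beta}^2$.

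The main obstacle is the case $\beta=1$: although each step is elementary, one must carefully track one-sided limits of $H$ at $0$ and invoke the Robin boundary condition to identify the limit of the discrete slow-bond term with the atom in the norm $\|\cdot\|_{2,\beta}$. The other two regimes amount to a Riemann-sum convergence plus a routine vanishing estimate, and the first identity \eqref{var_quad} is obtained by direct computation with no analytic subtlety.
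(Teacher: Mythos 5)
Your proposal is correct and follows essentially the same route as the paper's proof: you identify $\mathbb{E}_\rho^\beta[(\mc M^n_t(H))^2]$ with the expected quadratic variation computed through the carr\'e du champ, use $\int(\eta(x)-\eta(x+1))^2\,d\nu_\rho=2\chi(\rho)$ to obtain \eqref{var_quad}, and pass to the limit via a Riemann-sum argument for the bulk term plus the same three-case analysis of the slow-bond term (Taylor expansion for $\beta\in[0,1)$, the vanishing prefactor $n^{1-\beta}$ for $\beta\in(1,+\infty]$, and the Robin condition for $\beta=1$). The only caveat, which you share with the paper's own argument, is the final identification at $\beta=1$: the slow-bond term converges to $\alpha[H(0^+)-H(0^-)]^2=\alpha^{-1}\big(H^{(1)}(0^+)\big)^2$, which coincides with the atom $(\B H(0))^2$ of $\|\B H\|_{2,\beta}^2$ as defined only when $\alpha=1$, so on this point you are exactly as precise as the paper.
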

\begin{proof}
The quadratic variation of $\mc M^n_t(H)$  is given by
\begin{equation*}
\<\mc M^n(H)\>_t=\int_{0}^{t} n^{2} \Big[ \mc L_{n} \mc Y^n_s(H)^2-2 \mc Y^n_s(H)\mc L_{n} \mc Y^n_s(H)\Big] ds.
\end{equation*}
A simple computation shows that
\begin{equation}\label{var_quad1}
\begin{split}
\<\mc M^n(H)\>_t =&\int_{0}^{t} \frac{1}{n}\sum_{x\neq -1}(\eta_{sn^2}(x)-\eta_{sn^2}(x+1))^2
\Big[n\big(H(\pfrac{x+1}{n})-H(\pfrac{x}{n})\big)\Big]^{2} ds \\
+&\int_{0}^{t} \alpha n^{1-\beta}(\eta_{sn^2}(-1)-\eta_{sn^2}(0))^2
\big(H(\pfrac{0}{n})-H(\pfrac{-1}{n})\big)^{2} ds .
\end{split}
\end{equation}
To finish the first claim of the lemma is enough to take expectation with respect to $\nu_\rho$ in last expression.

Now, we prove the second claim.
Since for all $\beta\in[0,{+\infty}]$, $H\in\mc S(\bb R\backslash\{0\})$,  the first term on the right side of \eqref{var_quad}
converges to
 $2\chi(\rho)\,t\Vert \B H \Vert^2_2$, as  $n\to{+\infty}$. To finish the proof, it is enough to analyze the second term on the right
side of \eqref{var_quad}.
For $\beta\in[0,1)$ since $H\in\mc S(\bb R)$, then by Taylor's expansion is it easy to check that the second term above is of order
$O_H(n^{-\beta})$, which also vanishes as $n\rightarrow{+\infty}$.
For $\beta\in(1,+\infty]$, the second term on the right side of \eqref{var_quad} is bounded from above by $tn^{1-\beta}4\Vert
H\Vert_\infty^2$, which vanishes as $n\rightarrow{+\infty}$. Finally, for $\beta=1$, we use Taylor's expansion and the fact that
$\alpha\{H(0^+)-H(0^-)\}=H^{(1)}(0^-)=H^{(1)}(0^+)$ to show that it converges, as $n\to{+\infty}$, to
$2\chi(\rho)\,t \big(H^{(1)}(0^+)\big)^{2}$. This concludes the proof.
\end{proof}

\begin{corollary}
Fix  $H\in\mc S_\beta(\bb R)$. For $\beta\in[0,{+\infty}]$ and for any $t\geq{0}$

 \begin{equation}\label{var_quad_bound}
\begin{split}
 &\vert \<\mc M^n(H)\>_t\vert\leq t\Big\{\frac{1}{n}\sum_{x\neq -1}
\big(H^{(1)}(\pfrac{x}{n})\big)^{2} +  n^{1-\beta}
\big(H(\pfrac{0}{n})-H(\pfrac{-1}{n})\big)^{2}+O_H(\pfrac{1}{n})\Big\}  .
\end{split}
 \end{equation}

\end{corollary}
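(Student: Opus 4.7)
The corollary is essentially a pathwise bound that falls out of the explicit formula for $\<\mc M^n(H)\>_t$ already derived inside the proof of Lemma \ref{martingaleL2bounds}. My plan is to start from equation \eqref{var_quad1}, which gives the pathwise identity
\begin{equation*}
\<\mc M^n(H)\>_t \;=\; \int_0^t \frac{1}{n}\sum_{x\neq -1}(\eta_{sn^2}(x)-\eta_{sn^2}(x+1))^2\,\big[n\big(H(\pfrac{x+1}{n})-H(\pfrac{x}{n})\big)\big]^{2}\,ds \;+\; \int_0^t \alpha n^{1-\beta}\,(\eta_{sn^2}(-1)-\eta_{sn^2}(0))^2\,\big(H(\pfrac{0}{n})-H(\pfrac{-1}{n})\big)^{2}\,ds,
\end{equation*}
and then use the trivial pointwise bound $(\eta(x)-\eta(x+1))^2 \leq 1$, which holds because $\eta(x),\eta(x+1)\in\{0,1\}$. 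Under this bound the integrand becomes deterministic, so the $s$-integration simply produces a factor of $t$ in front of each sum. (The multiplicative constant $\alpha$ from the slow-bond term is fixed, so it is absorbed into the implicit constants on the right-hand side of \eqref{var_quad_bound}.)

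Next I would approximate each discrete gradient by the corresponding derivative. For $x\neq -1$ the two endpoints $x/n$ and $(x+1)/n$ lie on the same side of the singular point $0$, so $H$ is $C^\infty$ on the closed segment $[x/n,(x+1)/n]$, and Taylor's theorem gives
\begin{equation*}
n\big[H(\pfrac{x+1}{n})-H(\pfrac{x}{n})\big] \;=\; H^{(1)}(\pfrac{x}{n}) + \tfrac{1}{2n}H^{(2)}(\xi_{x,n}),
\end{equation*}
for some $\xi_{x,n}$ in that segment. Squaring and expanding yields
\begin{equation*}
\big[n\big(H(\pfrac{x+1}{n})-H(\pfrac{x}{n})\big)\big]^{2} \;=\; \big(H^{(1)}(\pfrac{x}{n})\big)^{2} \;+\; \tfrac{1}{n}\big|H^{(1)}(\pfrac{x}{n})\,H^{(2)}(\xi_{x,n})\big| \;+\; \tfrac{1}{4n^{2}}\big(H^{(2)}(\xi_{x,n})\big)^{2}.
\end{equation*}
Summing against the $1/n$ prefactor, the leading term is $\frac{1}{n}\sum_{x\neq-1}(H^{(1)}(x/n))^{2}$, exactly the term displayed in \eqref{var_quad_bound}, while the two remainders produce the error $O_H(1/n)$.

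The only step that requires any care is showing that the summed remainders are truly $O_H(1/n)$ uniformly in $n$. This is where the Schwartz-type assumption $H\in\mc S(\bb R\setminus\{0\})$ enters: the seminorms $\Vert H\Vert_{k,\ell}<\infty$ force both $|H^{(1)}|$ and $|H^{(2)}|$ to decay faster than any polynomial at infinity, so by a Riemann-sum comparison the sums $\frac{1}{n}\sum_{x\neq -1}|H^{(1)}(x/n)H^{(2)}(\xi_{x,n})|$ and $\frac{1}{n^{2}}\sum_{x\neq -1}(H^{(2)}(\xi_{x,n}))^{2}$ are bounded by constants depending only on finitely many seminorms of $H$. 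The slow-bond contribution $\alpha n^{1-\beta}(H(0/n)-H(-1/n))^{2}$ is kept verbatim, yielding the second term in \eqref{var_quad_bound}. No large estimate is needed, and no tightness or replacement argument; the whole corollary is a book-keeping consequence of Lemma \ref{martingaleL2bounds}, which is why I expect the ``main obstacle'' to be nothing more than this bookkeeping on the Taylor error terms.
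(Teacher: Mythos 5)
Your proposal is correct and follows essentially the same route as the paper, whose proof of this corollary is exactly the one-line argument you give: apply the triangle inequality to \eqref{var_quad1} together with $(\eta_{sn^2}(x)-\eta_{sn^2}(x+1))^2\leq 1$, the replacement of the discrete gradients by $H^{(1)}(\pfrac{x}{n})$ up to $O_H(\pfrac{1}{n})$ being left implicit there and spelled out by you. Your remark about the constant $\alpha$ in the slow-bond term points to the same harmless omission already present in the paper's statement, so it does not constitute a gap.
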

\begin{proof}
 It is enough to use the triangular inequality in equation \eqref{var_quad1}, together with the fact that
$(\eta_{sn^2}(x)-\eta_{sn^2}(x+1))^2\leq 1$, for  all $x\in\bb Z$ and $s\geq 0$.
 \end{proof}

% For a function $g\in L^2(\nu_\rho)$, we denote by $\mc D_n(g)$  the Dirichlet form of the function $g$,  defined as:
% \begin{equation*}
% \mc D_n (g) \;=\; \< g ,\,  - L_n g\>_{\rho}.
% \end{equation*}
%  An elementary computation shows
% that
% \begin{equation}\label{dirichlet}
% \mc D_n (g) \;=\; \sum_{x\in \bb Z} \frac{ \xi_{x,x+1}^n}{2}
% \int  \Big( g(\eta^{x,x+1}) -
% g(\eta) \Big)^2 \,\nu_\rho(d\eta)\;.
% \end{equation}

\begin{lemma}\label{lema_novo}
Let $g\in L^2(\nu_\rho)$ and $\{F_n\}_{n\in{\bb N}}$ a sequence of functions $F_n:\bb R\to\bb R$.
For any constant $A>0$,
\begin{equation*}
 \int \sum_{x\in\bb Z}F_n(\pfrac{x}{n})\{\eta(x)-\eta(x+1)\}g(\eta)\nu_\rho(d\eta)\,
\leq\, A\chi(\rho)\sum_{x\in\bb Z}\frac{1}{\xi_{x,x+1}^n}\big(F_n(\pfrac{x}{n})\big)^2
+\frac{1}{A}\,\mc D_n(g),
\end{equation*}
where $\mc D_n(g)$ is the Dirichlet form given in \eqref{dirichlet}.
\end{lemma}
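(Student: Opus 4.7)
The plan is to reproduce the standard weighted Cauchy–Schwarz (Young's inequality) estimate used throughout the paper, now with the free weight $1/\xi^n_{x,x+1}$ chosen to absorb the coefficient of the Dirichlet form. The key symmetry is that the exchange $\eta \mapsto \eta^{x,x+1}$ leaves the Bernoulli product measure $\nu_\rho$ invariant for every bond, regardless of the value of the conductance $\xi^n_{x,x+1}$.

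First, I would fix a bond $\{x,x+1\}$ and perform the change of variables $\eta \mapsto \eta^{x,x+1}$ in the integral defining the $x$-th summand. Since $(\eta^{x,x+1})(x) - (\eta^{x,x+1})(x+1) = -(\eta(x)-\eta(x+1))$ and $\nu_\rho$ is invariant under this exchange, writing the integral as the half-sum of itself and its transform produces
\begin{equation*}
\int F_n(\tfrac{x}{n})\bigl(\eta(x)-\eta(x+1)\bigr)g(\eta)\,\nu_\rho(d\eta)
=\tfrac12\int F_n(\tfrac{x}{n})\bigl(\eta(x)-\eta(x+1)\bigr)\bigl(g(\eta)-g(\eta^{x,x+1})\bigr)\,\nu_\rho(d\eta).
\end{equation*}

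Second, I would apply Young's inequality $ab\le \frac{A}{2}a^2+\frac{1}{2A}b^2$ with the splitting
\begin{equation*}
a=\frac{F_n(\tfrac{x}{n})}{\sqrt{\xi^n_{x,x+1}}}\bigl(\eta(x)-\eta(x+1)\bigr),\qquad
b=\sqrt{\xi^n_{x,x+1}}\bigl(g(\eta)-g(\eta^{x,x+1})\bigr),
\end{equation*}
so that the $b^2$-term reassembles (after summing in $x$ and using the factor $1/2$ above) to exactly $\frac{1}{A}\mathcal D_n(g)$ by the definition \eqref{dirichlet}, while the $a^2$-term leaves a manageable leftover involving $(\eta(x)-\eta(x+1))^2$.

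Third, I would sum over $x\in\bb Z$ and integrate, using the elementary identity $\int (\eta(x)-\eta(x+1))^2 \,\nu_\rho(d\eta)=2\chi(\rho)$, which follows from $\nu_\rho$ being Bernoulli product of parameter $\rho$. This yields exactly the upper bound
\begin{equation*}
A\chi(\rho)\sum_{x\in\bb Z}\frac{1}{\xi^n_{x,x+1}}\bigl(F_n(\tfrac{x}{n})\bigr)^2+\frac{1}{A}\mc D_n(g),
\end{equation*}
as claimed. There is no real obstacle here: the only subtle point is making sure the choice of weight in Young's inequality leaves $\xi^n_{x,x+1}$ in the numerator of the Dirichlet-form piece (to match \eqref{dirichlet}) and in the denominator of the gradient-of-$F_n$ piece, which is precisely what the splitting above achieves. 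This is the same one-line symmetry-plus-Young argument used in the proof of Lemma \ref{usefullema}, but stated with an arbitrary test function $F_n$ instead of the telescoping weight arising there.
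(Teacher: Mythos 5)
Your proposal is correct and follows essentially the same route as the paper: symmetrize via the $\nu_\rho$-preserving exchange $\eta\mapsto\eta^{x,x+1}$ to write the integrand with the difference $g(\eta)-g(\eta^{x,x+1})$, then apply a weighted Young/Cauchy--Schwarz inequality whose weight $\xi^n_{x,x+1}$ reassembles the Dirichlet form, and use $\int(\eta(x)-\eta(x+1))^2\,\nu_\rho(d\eta)=2\chi(\rho)$. The only tiny quibble is the word ``exactly'': with your splitting the $F_n$-term comes out as $\tfrac{A}{2}\chi(\rho)\sum_x\xi^{-n}_{x,x+1}(F_n(\tfrac{x}{n}))^2$, i.e.\ a factor $2$ better than stated, which of course still implies the lemma.
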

 \begin{proof}
Rewriting the expression above as twice the half and making the transformation $\eta\mapsto \eta^{z,z+1}$ (for
which the probability $\nu_\rho$ is invariant), we have
\begin{equation*}
\begin{split}
& \int \sum_{x\in\bb Z}F_n(\pfrac{x}{n})\{\eta(x)-\eta(x+1)\}g(\eta)\nu_\rho(d\eta)\\
=&
\frac{1}{2} \int \sum_{x\in\bb Z}F_n(\pfrac{x}{n})\{\eta(x)-\eta(x+1)\}\{g(\eta)-g(\eta^{x,x+1})\}\nu_\rho(d\eta)
.
\end{split}
\end{equation*}
By Cauchy-Schwarz's inequality, for any $A>0$, we bound the previous expression from above by
\begin{equation*}
\begin{split}
 &  \frac{1}{2}\sum_{x\in\bb Z}\frac{A}{\xi_{x,x+1}^n}\big(F_n(\pfrac{x}{n})\big)^2
\int\{\eta(x)-\eta(x+1)\}^2\,\nu_\rho(d\eta)\\\
 +\,&  \frac{1}{2}\sum_{x\in\bb Z}
\frac{\xi_{x,x+1}^n}{ A}\int \{g(\eta)-g(\eta^{x,x+1})\}^2\,\nu_\rho(d\eta).
\end{split}
 \end{equation*}
Recalling \eqref{dirichlet}, the  proof finishes.
 \end{proof}

\begin{lemma}\label{integralL2bounds}
Fix $H\in\mc S_\beta(\bb R)$. For $\beta\in[0,{+\infty}]$ and for any $t\geq{0}$
\begin{equation}\label{I^2}
\mathbb{E}_\rho^\beta \Big[\big(\mc I_{t}^{n}(H)\big)^2\Big]\leq 80 \,t\,\chi(\rho)\Big\{\frac{1}{n}\sum_{x\neq
-1}\big(\nabla_nH(\pfrac{x}{n})\big)^2
+n^{1-\beta}\big[H\big(\pfrac{0}{n}\big)-H\big(\pfrac{-1}{n}\big)\big]^2\Big\},
\end{equation}
where $\nabla_n H\big(\pfrac{x}{n}\big)=n\big[H\big(\pfrac{x+1}{n}\big)-H\big(\pfrac{x}{n}\big)\big]$ and
\begin{equation*}
\limsup_{n\to{+\infty}}\,\mathbb{E}_\rho^\beta \Big[\big(\mc I_{t}^{n}(H)\big)^2\Big]\leq
80\, t\,\chi(\rho)\|\B H\|_{2,\beta}^2,
\end{equation*}
where
$\mc I_{t}^{n}(H)$ was defined in \eqref{I}.
\end{lemma}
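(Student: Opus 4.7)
The proof follows the same variational strategy as Lemma \ref{martingaleL2bounds}, the main difference being that for $\mc I^n_t(H)$ we must bound the second moment of a time integral, so I would invoke Proposition A1.6.1 of \cite{kl} in place of a direct quadratic variation computation.

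The first observation is that, since $\sum_{x\in\bb Z} n^2\,\mathbb L_n H(\pfrac{x}{n}) = 0$ by telescopy, we may replace $\eta_{sn^2}(x)$ inside $\mc I_t^n(H)$ by the centered variable $\bar\eta_{sn^2}(x)$. A discrete integration by parts then rewrites the integrand as
\begin{equation*}
\frac{1}{\sqrt n}\sum_{x\in\bb Z} n^2\, \mathbb L_n H(\pfrac{x}{n})\,\bar\eta(x) \;=\; \sum_{x\in\bb Z} F_n(\pfrac{x}{n})\big(\eta(x)-\eta(x+1)\big),
\end{equation*}
with $F_n(\pfrac{x}{n}) := \sqrt n\, \xi^n_{x,x+1}\,\nabla_n H(\pfrac{x}{n})$. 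Applying the Kipnis--Varadhan inequality (Proposition A1.6.1 of \cite{kl}) bounds $\mathbb E_\rho^\beta[(\mc I_t^n(H))^2]$ by $20t$ times the variational quantity $\|\cdot\|_{-1}^2$ evaluated on this mean-zero function.

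Next, I would invoke Lemma \ref{lema_novo} with the free constant chosen so that the Dirichlet contribution $\tfrac{1}{A}\mc D_n(g)$ cancels half of $n^2\mc D_n(g)$, namely $A=2/n^2$. This yields
\begin{equation*}
\mathbb E_\rho^\beta\big[(\mc I_t^n(H))^2\big] \;\le\; \frac{80\, t\, \chi(\rho)}{n^2}\sum_{x\in\bb Z}\frac{1}{\xi^n_{x,x+1}}F_n(\pfrac{x}{n})^2 \;=\; \frac{80\, t\, \chi(\rho)}{n}\sum_{x\in\bb Z}\xi^n_{x,x+1}\big(\nabla_n H(\pfrac{x}{n})\big)^2.
\end{equation*}
Splitting the last sum according to whether $x\neq -1$ (so that $\xi^n_{x,x+1}=1$) or $x = -1$ (so that $\xi^n_{-1,0} = \alpha n^{-\beta}$ and $\nabla_n H(\pfrac{-1}{n}) = n[H(\pfrac{0}{n})-H(\pfrac{-1}{n})]$) produces the pointwise bound \eqref{I^2}, with the resulting $\alpha$ absorbed into the overall constant.

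For the $\limsup$ statement, I would pass to the limit directly on the right-hand side of \eqref{I^2}. The bulk sum $\tfrac{1}{n}\sum_{x\neq -1}(\nabla_n H(\pfrac{x}{n}))^2$ is a Riemann approximation for $\int_{\bb R}(H^{(1)}(u))^2\,du$, which is legitimate because $H\in\mc S(\bb R\setminus\{0\})$ is smooth away from $0$ and has polynomial decay. The boundary term $n^{1-\beta}[H(\pfrac{0}{n})-H(\pfrac{-1}{n})]^2$ I would treat case by case exactly as at the end of the proof of Lemma \ref{martingaleL2bounds}: if $\beta\in[0,1)$, smoothness of $H$ across the origin makes it $O(n^{-1-\beta})$; if $\beta=1$, the Robin relation $\alpha(H(0^+)-H(0^-))=H^{(1)}(0^+)$ combined with Taylor expansion turns it into a multiple of $(H^{(1)}(0^+))^2$; and if $\beta\in(1,+\infty]$, the Neumann condition $H^{(1)}(0^\pm)=0$ together with the decaying prefactor $n^{1-\beta}$ forces it to vanish. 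Assembling the three regimes recovers the norm $\|\B H\|_{2,\beta}^2$. The only delicate step is performing the summation by parts so that the weight $F_n$ inherits the conductance $\xi^n_{x,x+1}$, which is what makes the quotient $F_n^2/\xi^n_{x,x+1}$ reconstruct the conductance-weighted Dirichlet form; beyond that, the argument is parallel to Lemma \ref{martingaleL2bounds} with only different numerical constants.
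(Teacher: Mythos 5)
Your proposal is correct and follows essentially the same route as the paper: summation by parts to write the integrand as $\sum_x F_n(\pfrac{x}{n})(\eta(x)-\eta(x+1))$, the Kipnis--Varadhan bound (Proposition A1.6.1 of \cite{kl}) giving the factor $20t$, Lemma \ref{lema_novo} with $A=2/n^2$ to cancel the Dirichlet form and produce the constant $80$, and then Riemann-sum convergence plus the case analysis at the slow bond as in Lemma \ref{martingaleL2bounds} for the $\limsup$. The only cosmetic difference is that your $F_n(\pfrac{-1}{n})$ keeps the factor $\alpha$ explicitly (consistent with the quadratic-variation bound in Lemma \ref{martingaleL2bounds}), whereas the paper omits it; this does not affect the argument or the limit statement.
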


\begin{proof}
Recall the definition of $\mc I_{t}^{n}(H)$ given in \eqref{I}. A simple computation shows that
\begin{equation*}
\begin{split}
 \mc I_{t}^{n}(H)=&\int_0^t\sqrt{n}\sum_{x\neq -1,0}\Big\{\nabla_n H\big(\pfrac{x}{n}\big)-\nabla_n
H\big(\pfrac{x-1}{n}\big)
\Big\}\eta_{sn^2}(x)\,ds\\
+&\int_0^t\sqrt{n}\Big\{\nabla_n H\big(\pfrac{0}{n}\big)\eta_{sn^2}(0)-\nabla_n
H\big(\pfrac{-2}{n}\big)\eta_{sn^2}(-1)
\Big\}ds\\
+&
\int_0^tn^{3/2-\beta}\big[H\big(\pfrac{0}{n}\big)-H\big(\pfrac{-1}{n}\big)\big]\{\eta_{sn^2}(-1)-\eta_{sn^2}(0)\}ds\
.
\end{split}
\end{equation*}

Last expression can be rewritten as
\begin{equation*}
\begin{split}
&\int_0^t\sum_{x\in{\bb Z}}F_n\big(\pfrac{x}{n}\big)\{\eta_{sn^2}(x)-\eta_{sn^2}(x+1)\},
\end{split}
\end{equation*}
where
\begin{equation*}
 F_n\big(\pfrac{x}{n}\big)= \left\{
\begin{array}{ll}
n^{3/2-\beta}\big[H\big(\pfrac{0}{n}\big)-H\big(\pfrac{-1}{n}\big)\big],& \mbox{if}\,\,x= -1,\\ \\
\sqrt{n}\,\,\nabla_n H\big(\pfrac{x}{n}\big),&\,\,\mbox{otherwise}.\\
\end{array}
\right.
\end{equation*}
By Proposition A1.6.1 of \cite{kl}, we have that
\begin{equation*}
\begin{split}
&\mathbb{E}_\rho^\beta \Big[\big(\mc I_{t}^{n}(H)\big)^2\Big]\leq 20 t\,\sup_{g\in L^2(\nu_\rho)}\Bigg\{2\int
\sum_{x\in{\bb Z}}F_n\big(\pfrac{x}{n}\big)\{\eta(x)-\eta(x+1)\}g(\eta)\nu_\rho(d\eta)
-n^2\mc D_n(g)\Bigg\}.
\end{split}
\end{equation*}
By Lemma \ref{lema_novo}, last expression is bounded from above by
\begin{equation*}
\begin{split}
20 t\,\sup_{g\in L^2(\nu_\rho)}\Bigg\{2A\chi(\rho)\sum_{x\in\bb Z}\frac{1}{\xi_{x,x+1}^n}\big(F_n(\pfrac{x}{n})\big)^2
+\frac{2}{A}\,\mc D_n(g)
-n^2\mc D_n(g)\Bigg\}.
\end{split}
\end{equation*}
Taking $A=\pfrac{2}{n^2}$ and by the definition of $F_n$ the proof of the first claim ends.

To prove the second one, we notice the following. The first term on the right hand side of \eqref{I^2} converges to
$80t\,\chi(\rho)\Vert \B H \Vert_2^2$.
The second term on the right hand side of \eqref{I^2} can be analyzed as in the proof of Lemma \ref{martingaleL2bounds}.
\end{proof}

\section*{Acknowledgements}

The authors thank the warm hospitality of IMPA (Brazil)  where this work was
initiated  and to CMAT (Portugal) where this work was finished. The authors thank FCT (Portugal) and CAPES (Brazil) for the financial support through the
research project "Non-Equilibrium Statistical Mechanics of Stochastic
Lattice Systems".

P.G. thanks FCT (Portugal) for support through the research
project ``Non-Equilibrium Statistical Physics" PTDC/MAT/109844/2009.
PG thanks the Research Centre of Mathematics of the University of
Minho, for the financial support provided by ``FEDER" through the
``Programa Operacional Factores de Competitividade  COMPETE" and by
FCT through the research project PEst-C/MAT/UI0013/2011.

T.F. was supported through a grant "BOLSISTA DA CAPES - Brasília/Brasil" provided by CAPES (Brazil).

\end{document}